\newcommand{\FrameboxA}[2][]{#2}
\newcommand{\Framebox}[1][]{\FrameboxA}
\renewcommand{\pmod}[1]{{\ifmmode\text{\rm\ (mod~$#1$)}\else\discretionary{}{}{\hbox{ }}\rm(mod~$#1$)\fi}}
\newcommand{\p}{\mathcal{P}}
\newcommand{\LL}{\mathcal{L}}
\newcommand{\phik}{\phi_{k}(n)}
\newcommand{\lamk}{\lambda_{k}(n)}
\newcommand{\hk}{h_{k}}
\newcommand{\abs}[1]{\lvert#1\rvert}
\newcommand{\nat}{\mathbb{N}}
\newcommand{\Z}{\mathbb{Z}}
\newcommand{\al}{\alpha}
\newcommand{\eps}{\epsilon}
\newcommand{\lcm}{\mathop{\rm lcm }}
\newcommand{\li}{\mathop{\rm li }}
\newtheorem{theorem}{Theorem}
\newtheorem{lemma}[theorem]{Lemma}
\newtheorem{proposition}[theorem]{Proposition}
\theoremstyle{definition}
\newtheorem{definition}[theorem]{Definition}
\title{The iterated Carmichael lambda function}
\author{Nick Harland}
\address{Department of Mathematics, University of British Columbia, Room 121, 1984 Mathematics Road, Vancouver, BC, V6T 1Z2, Canada}
\email{harlandn@math.ubc.ca}
\subjclass[2010]{11N56 (11N37)}
\begin{document}
\maketitle
\begin{abstract}
The Carmichael lambda function $\lambda(n)$ is defined to be the smallest positive integer $m$ such that $a^m$ is congruent to $1$ modulo $n,$ for all $a$ and $n$ relatively prime. The function $\lambda_k(n)$ is defined to be the $k$th iterate of $\lambda(n).$ Previous results show a normal order for $n/\lambda_k(n)$ where $k=1,2.$ We will show a normal order for all $k.$
\end{abstract}

\section{Introduction}\label{Intro}

The Carmichael lambda function $\lambda(n)$ is defined to be the order of the largest cyclic subgroup of the multiplicative subgroup $(\Z / n\Z)^{\times}.$ It can be computed using the identity $\lambda(\lcm\{a,b\})=\lcm\{\lambda(a),\lambda(b)\}$ and its values at prime powers which are $\lambda(p^{k})=\phi(p^{k})=p^k-p^{k-1}$ for odd primes $p$ and $\lambda(2)=1,\lambda(4)=2,$ and $\lambda(2^{k})=\phi(2^{k})/2=2^{k-2}$ for $k \ge 3$.

Several properties of $\lambda(n)$ were studied by Erd\H os, Pomerance, and Schmutz in \cite{EPS}. In particular they showed that $\lambda(n)=n \exp(-(1+o(1))\log\log n \log\log\log n)$ as $n \rightarrow \infty$ for almost all $n$. Martin and Pomerance showed in \cite{MP} that $\lambda(\lambda(n))=n \exp(-(1+o(1))(\log\log n)^2 \log\log\log n)$ as $n \rightarrow \infty$ for almost all $n$. The $k$--fold iterated Carmichael lambda function is defined recursively to be 
$$\lambda_1(n)=\lambda(n),~~\lambda_k(n)=\lambda(\lambda_{k-1}(n)).$$ We define $\phi_k(n)$ similarly.
In \cite{MP} it is conjectured that $$\lambda_{k}(n)=n \exp\bigg({-}\frac{1}{(k-1)!}(1+o_k(1))(\log\log n)^k \log\log\log n \bigg)$$ for almost all $n.$ In this paper we prove that conjecture.

\begin{theorem} \label{MainTheorem} For fixed $k$, the normal order of $\log \frac{n}{\lambda_{k}(n)}$ is $\frac{1}{(k-1)!}(\log\log{n})^{k}\log\log\log{n}$. \end{theorem}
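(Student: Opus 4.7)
The plan is to proceed by induction on $k$, using the $k=1$ case of Erd\H os--Pomerance--Schmutz \cite{EPS} and the $k=2$ case of Martin--Pomerance \cite{MP} as base. The decomposition
\[
\log \frac{n}{\lambda_k(n)} = \log \frac{n}{\lambda_{k-1}(n)} + \log \frac{\lambda_{k-1}(n)}{\lambda_k(n)}
\]
gives a first summand of normal order $\frac{1}{(k-2)!}(\log\log n)^{k-1}\log\log\log n$ by the inductive hypothesis, which is negligible compared with the target. So the crux is to prove that $\log(\lambda_{k-1}(n)/\lambda_k(n))$ has normal order $\frac{1}{(k-1)!}(\log\log n)^{k}\log\log\log n$ for almost all $n$. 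One cannot simply quote \cite{EPS} with $m = \lambda_{k-1}(n)$ in place of $n$: although $\log\log m \sim \log\log n$, the value $\lambda_{k-1}(n)$ is wildly atypical in its factorisation---it is extremely smooth, and one expects it to have roughly $\frac{1}{(k-1)!}(\log\log n)^{k}$ distinct prime factors, vastly more than $\log\log n$.

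To analyse this refined question, I would extend the prime-chain bookkeeping of \cite{MP}. For each $k$-chain of primes $q = p_0, p_1, \dots, p_{k-1}$ with $p_{i-1} \mid p_i - 1$ and $p_{k-1} \mid n$, one gets a contribution of $q$ to $\lambda_k(n)$, and essentially every prime power dividing $\lambda_k(n)$ arises this way (up to the small corrections coming from the prime $2$ and from $p^2 \mid n$ degeneracies). Using Siegel--Walfisz for small moduli and Brun--Titchmarsh / Bombieri--Vinogradov for larger ones, one computes that the expected total $\log$-contribution of such chains for a random $n$ is asymptotic to $\frac{1}{(k-1)!}(\log\log n)^{k}\log\log\log n$, where the $\log\log\log n$ comes from the final sum $\sum_q \log q / q$ and the $(\log\log n)^k/(k-1)!$ from the iterated chain count with its strict ordering $p_0 < p_1 < \cdots < p_{k-1}$. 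A Tur\'an--Kubilius-style variance estimate for the additive function on $n$ counting chains then gives concentration for almost all $n$, supplying both the upper and lower bounds.

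The main obstacle I expect is the uniformity required in the arithmetic-progression estimates across the $k$ iterated levels: at each level one needs good control of $\pi(x; q, 1)$ uniformly in $q$ on a range growing with $n$, and the errors must be compounded without amplification. A secondary obstacle is combinatorial: one has to track how often distinct $k$-chains share vertices, in order to avoid over-counting $v_q(\lambda_k(n))$ on the upper-bound side while retaining enough multiplicity on the lower-bound side. It is this careful bookkeeping---essentially a sieve argument balancing main terms against multi-chain coincidences---that produces the precise normalising constant $1/(k-1)!$.
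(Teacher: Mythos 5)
Your overall strategy---reduce to counting chains of primes $p_{k-1}\mid n$, $p_{k-2}\mid p_{k-1}-1,\dots$ terminating at $q$, estimate the expected number of such chains via Brun--Titchmarsh and Bombieri--Vinogradov, and obtain concentration from a Tur\'an--Kubilius variance bound---is exactly the strategy of the paper, and your heuristics for the constant (the $1/(k-1)!$ from the implicit ordering along the chain, the $\log\log\log n$ from $\sum_q \log q/q$ over small $q$) are the right ones. The one structural difference is the pivot: you induct via $\log(\lambda_{k-1}(n)/\lambda_k(n))$, whereas the paper writes $\log(n/\lambda_k(n))=\log(n/\phi_k(n))+\log(\phi_k(n)/\lambda_k(n))$ and disposes of the first term with $\phi(m)\gg m/\log\log m$. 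The latter is the better pivot, because $v_q(\phi_k(n))$ literally counts chains with multiplicity while $v_q(\lambda_k(n))$ records essentially only one of them, so the target becomes the clean ``excess chain count'' $\sum_q\bigl(v_q(\phi_k(n))-v_q(\lambda_k(n))\bigr)\log q$. Your induction on $k$ buys nothing: the inductive hypothesis is used only to discard a lower-order term, and the level-$k$ chain analysis must be done from scratch regardless.

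As a proof, however, the proposal stops where the work begins, and the two items you park as ``obstacles'' are the bulk of the argument, not refinements of it. First, the truncation in $q$ is not optional: summing $\log q$ over \emph{all} chains gives roughly $\log\phi_k(n)\approx\log n$, not $(\log\log n)^k\log\log\log n/(k-1)!$. The correct mechanism is that only $q\le(\log\log x)^k$ contribute, because for larger $q$ there is almost surely at most one chain ending at $q$, and a single chain contributes equally to $v_q(\phi_k)$ and $v_q(\lambda_k)$ and cancels; proving this requires classifying every way $q^a$ can divide $\phi_k(n)$ (the paper's Cases 1--3 and ``incarnations,'' with a minimality argument to keep the number of configurations $O_k(1)$) and bounding the exceptional set of $n$. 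Second, to apply Tur\'an--Kubilius you must replace $\sum_{q\le y^k}v_q(\phi_k(n))\log q$ by a genuinely additive function of $n$; the paper's $h_k(n)$ keeps only the ``supersquarefree'' chains, and showing the discarded terms are $O(y^{k-1}\log y\cdot\psi(x))$ for almost all $n$ is a separate nested Brun--Titchmarsh computation. Neither step follows from the tools you name without the combinatorial bookkeeping you defer, so what you have is a correct plan rather than a proof.
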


We'll actually prove the theorem in the following slightly stronger form. Given any function $\psi(x)=o(\log\log\log x)$ and $\psi(x) \rightarrow \infty$ as $x \rightarrow \infty$ we have
$$\log\bigg(\frac{n}{\lambda_{k}(n)}\bigg) = \frac{1}{(k-1)!} (\log\log n)^{k} \bigg(\log\log\log n + O_k\big(\psi(n)  \big)\bigg)$$ for all but $O(x/\psi(x))$ integers up to $x$.

We will also turn our attention to finding an asymptotic formula involving iterates involving $\lambda$ and $\phi$. Banks, Luca, S\u{a}id\u{a}k, and Stanic in \cite{BLSS}  showed that for almost all $n$,

$$\lambda(\phi(n))=n \exp(-(1+o(1))(\log\log n)^2 \log\log\log n) \text{ and }$$
$$\phi(\lambda(n))=n \exp(-(1+o(1))(\log\log n) \log\log\log n).$$
As a corollary to Theorem \ref{MainTheorem} we will obtain asymptotic formulas for higher iterates involving $\lambda$ and $\phi.$ Specifically we prove the following.

\begin{theorem}\label{Second}
For $l\ge 0$ and $k \ge 1$, let $g(n)=\phi_l(\lambda(f(n)))$, where $f(n)$ is a $(k-1)$ iterated arithmetic function consisting of iterates of $\phi$ and $\lambda$. Then the normal order of  $\log(n/g(n))$ is $\frac{1}{(k-1)!}(\log\log{n})^{k}\log\log\log{n}.$
\end{theorem}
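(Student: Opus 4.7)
The plan is to deduce Theorem \ref{Second} from Theorem \ref{MainTheorem} by establishing, for all but $O(x/\psi(x))$ integers $n \leq x$, the comparison
\[
\log g(n) = \log \lambda_{k}(n) + o\!\left(\frac{(\log\log n)^{k}}{(k-1)!}\log\log\log n\right),
\]
so that the normal order of $\log(n/g(n))$ is inherited from Theorem \ref{MainTheorem}. I split the task into two independent reductions: peeling off the outer $\phi_{l}$, and replacing each internal $\phi$ in $f$ by $\lambda$.

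First I handle the outer $\phi_{l}$. By the Erd\H{o}s--Wintner theorem, $\log(m/\phi(m)) = -\sum_{p\mid m}\log(1-1/p)$ possesses a limiting distribution, so for almost all $m$ this quantity is bounded by any function tending to infinity. Setting $m_{0} = \lambda(f(n))$, every iterate of $\phi$ and $\lambda$ loses only a subpolynomial factor in the logarithm, and in particular $\log m_{0} = (1-o(1))\log n$. A short induction on the fixed integer $l$ then gives $\log\bigl(m_{0}/\phi_{l}(m_{0})\bigr) = o(\log\log\log n)$ for all but $O(x/\psi(x))$ values of $n \leq x$, which is absorbed into the error in Theorem \ref{MainTheorem}.

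For the second reduction, write $f = f_{k-1}\circ\cdots\circ f_{1}$ with each $f_{i} \in \{\phi,\lambda\}$, and induct on the number of $\phi$'s appearing in $f$. The inductive step asserts that swapping a single $f_{i} = \phi$ for $\lambda$ changes $\log\lambda(f(n))$ by $o\!\bigl((\log\log n)^{k}\log\log\log n\bigr)$ for almost all $n$. The crucial algebraic input is the identity $\operatorname{rad}(\phi(m)) = \operatorname{rad}(\lambda(m))$ for all $m > 2$ (since $\phi(p^{a}) = \lambda(p^{a})$ except at $p=2$, $a\geq 3$, where they differ only by a factor of $2$), together with $\lambda(m)\mid\phi(m)$. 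Thus the two intermediate values produced at step $i+1$ in $f$ and in the swapped composition share the same prime support and differ only in prime-power exponents. The outermost $\lambda$ applied in $g$ (just before $\phi_{l}$) is an $\lcm$ of values $\lambda(p^{a})$ over prime powers and so depends only on the \emph{maximum} valuation at each prime; this property collapses most of the exponent discrepancy, after which the residual error is controlled by the same prime-chain machinery that drives the proof of Theorem \ref{MainTheorem}. Carrying out at most $k-1$ such swaps reduces $f$ to $\lambda_{k-1}$ and $\lambda(f(n))$ to $\lambda_{k}(n)$.

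The main obstacle is quantifying this propagation. The a priori estimate $\log(\phi(m)/\lambda(m)) = (1+o(1))\log\log m\log\log\log m$ for typical $m$ is of the same order as one factor of the main term, so a naive divisibility argument is too lossy. Instead, one must argue prime-by-prime: the dominant combinatorial sum underlying $\log(n/\lambda(f(n)))$ ranges over prime chains $p_{k}\mid p_{k-1}-1 \mid \cdots \mid p_{1}-1 \mid n$, and one must verify that a $\phi$-step contributes to this chain-counting identically to a $\lambda$-step, up to fluctuations of strictly smaller order. Once this is established, the two reductions above together with Theorem \ref{MainTheorem} yield Theorem \ref{Second}.
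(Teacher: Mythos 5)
Your overall plan---reduce $g(n)=\phi_l(\lambda(f(n)))$ to a pure $\lambda$-iterate by first peeling off the outer $\phi$'s and then accounting for the internal $\phi$'s in $f$---is structurally sound, but the quantitative heart of the argument is missing.

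For reduction~1, there is a subtle issue in the Erd\H{o}s--Wintner step: that theorem controls $\log(m/\phi(m))$ for almost all $m\le M$ under \emph{uniform} counting of $m$, whereas you need a bound for $m_0=\lambda(f(n))$ as $n$ ranges over almost all $n\le x$. The image of the map $n\mapsto\lambda(f(n))$ is far from equidistributed, so transferring an ``almost all $m$'' statement to ``almost all $n$'' is not automatic and would itself need an argument. The paper avoids this entirely by using the deterministic lower bound $\phi(m)\ge m/\bigl(e^{\gamma}\log\log m+3/\log\log m\bigr)$, valid for \emph{every} $m\ge 3$, which gives $\phi_l(m_0)=m_0\exp(O(\log\log\log n))$ unconditionally and with no exceptional set; this $O(\log\log\log n)$ is already negligible against the target main term.

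Reduction~2 is the genuine gap. You correctly see that naive divisibility $\lambda(m)\mid\phi(m)$ is too lossy, you record the radical identity $\operatorname{rad}(\phi(m))=\operatorname{rad}(\lambda(m))$, and you correctly anticipate that a prime-chain analysis mirroring Theorem~\ref{MainTheorem} could in principle finish the job---but you do not carry it out, and you explicitly flag ``quantifying this propagation'' as the main obstacle. As written, the central step is an acknowledged hole. The paper closes it with a much shorter route that sidesteps all prime-chain bookkeeping: the elementary Lemma $\lambda(ab)\le b\lambda(a)$. Since $\lambda_k(n)\mid\phi(h(n))$ for any $(k-1)$-fold iterate $h$, writing $\phi(h(n))=\tfrac{\phi(h(n))}{\lambda_k(n)}\cdot\lambda_k(n)$ and iterating the lemma (together with $\lambda_{k+j}(n)\mid\lambda_j(\phi(h(n)))$, so that the integer quotient only shrinks under further applications of $\lambda$) yields
\[
\lambda_{l+k}(n)\ \le\ \lambda_l\bigl(\phi(h(n))\bigr)\ \le\ \frac{\phi(h(n))}{\lambda_k(n)}\,\lambda_{l+k}(n)\ \le\ \frac{n}{\lambda_k(n)}\,\lambda_{l+k}(n).
\]
By Theorem~\ref{MainTheorem}, the multiplicative correction $n/\lambda_k(n)$ is $\exp\bigl(O((\log\log n)^{k}\log\log\log n)\bigr)$, which for $l\ge 1$ is of strictly lower order than the target $(\log\log n)^{l+k}\log\log\log n$, so the sandwich closes immediately. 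This handles every $\lambda$-outermost iterate; combined with the deterministic peel-off of the leading $\phi$'s, it gives Theorem~\ref{Second}. The lesson: your acknowledged obstacle dissolves once you exploit the multiplicative lemma together with the divisibility $\lambda_k(n)\mid\phi(h(n))$; re-running the full prime-chain machinery of Theorem~\ref{MainTheorem} is vastly more work and is not needed.
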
 An example of the use of this theorem is for $\phi\phi\lambda\phi\phi\lambda\lambda\phi(n).$ Since $l=2,k=5,$ we get that the normal order of $\log \frac{n}{\phi\phi\lambda\phi\phi\lambda\lambda\phi(n)}$ is 
$$\frac{1}{4!}(\log\log n)^5 \log\log\log n.$$

The proof of Theorem \ref{MainTheorem} involves breaking down $\frac{n}{\lambda_{k}(n)}$ in terms of the iterated Euler $\phi$ function by using 
\begin{equation}\label{AA}\frac{n}{\lambda_k(n)}=\bigg(\frac{n}{\phi(n)}\bigg)\bigg(\frac{\phi(n)}{\phi_2(n)}\bigg)\dots \bigg(\frac{\phi_{k-1}(n)}{\phi_{k}(n)}\bigg)\bigg(\frac{\phi_{k}(n)}{\lambda_{k}(n)}\bigg)\end{equation} of which estimates for all but the last term are known. Hence $\log \frac{n}{\lambda_{k}(n)}$ can be written as a sum of the logarithms on the right side of \eqref{AA} and so we'll analyze the term $\log(\phi_k(n)/\lambda_k(n)).$ The following notations and conventions will be used throughout the paper. The letters $p,q,r$ will always denote primes and $k \ge 2$ will be a fixed integer. Note that the theorem has already been proven for $k=1.$ Let $v_{p}(n)$ be the largest power of $p$ which divides $n,$ so that
$$n= \prod_{p}p^{v_{p}(n)}.$$ Let the set $\p_n$ be $\{p : p \equiv 1 \pmod{n} \}.$ Throughout the paper we will assume $x>e^{e^e}$ and $y=y(x)=\log\log x.$ Also let $\psi(x)$ be any function going to $\infty$ such that $\psi(x)=o(\log y)=o(\log\log\log x).$ Whenever we use the phrase  ``for almost all $n\le x$'' in a result, we mean that the result is true for all $n \le x$ except a set of size $O(x/\psi(x)).$ Lastly we note that any implicit constant may depend on $k.$

\section{Required Estimates}

The following estimates will be used throughout the paper. We use the Chebeshev bound

\begin{equation}\label{Cheb}
\sum_{n\le x} \Lambda(n) = \sum_{p\le x}\log p \ll x
\end{equation} where $\Lambda(n)$ is the von--Mangoldt function. We also require a formula of Mertens (See \cite[Theorem 2.7(b)]{MV}) 

\begin{equation} \label{Merten}
\sum_{q \le x} \frac{\log q}{q} = \log x + O(1).
\end{equation}
Using partial summation on \eqref{Cheb} we can obtain the tail estimates

\begin{equation} \label{logq_q2}
\sum_{q>x} \frac{\log q}{q^{2}} \ll \frac{1}{x}
\end{equation} and

\begin{equation} \label{1_q2}
\sum_{q>x} \frac{1}{q^{2}} \ll \frac{1}{x \log x}.
\end{equation}
Given $m,x$, let $A$ be the smallest $a$ for which $m^a>x.$ We can then manipulate the sums
$$\sum_{a \in \nat} \frac{P(a)}{m^{a}}=\frac{1}{m}\sum_{a=0}^{\infty} \frac{P(a)}{m^{a}} \text{ and } \sum_{\substack{a \in \nat \\ m^{a}>x}}\frac{P(a)}{m^{a}} \ll \frac{1}{x}\bigg| \sum_{a=0}^{\infty} \frac{P(a)}{m^{a-A}}\bigg|=\frac{1}{x}\bigg| \sum_{a=A}^{\infty} \frac{Q(a)}{m^{a}}\bigg|$$ for $Q(x)=P(x+A).$ Then by noting that $\sum_{a=A}^{\infty} \frac{P(a)}{m^{a}}\ll_P 1$ uniformly for $m\ge 2$ and $A \ge 0$ we obtain the estimates
\begin{equation} \label{geometric}
\sum_{a \in \nat} \frac{P(a)}{m^{a}} \ll_{P} \frac{1}{m}, \sum_{\substack{a \in \nat \\ m^{a}>x}} \frac{P(a)}{m^{a}} \ll_{P} \frac{1}{x}.
\end{equation}
From \cite[Corollary 1.15]{MV} we get
\begin{equation}\label{Recip1}
\sum_{s \le x} \frac{1}{s} = \log x + O(1)
\end{equation} from which it easily follows that
\begin{equation}\label{Recip2}
\sum_{\substack{D \le s \le x \\ s \equiv a \pmod{C}}} \frac{1}{s} \ll \frac{1}{D}+\frac{\log x}{C}.
\end{equation}
We will also make frequent use of the Brun-Titchmarsh inequality \cite[Theorem 3.9]{MV}

\begin{equation} \label{BT1}
\pi(t;n,a) \ll \frac{t}{\phi(n)\log(t/n)}.
\end{equation}
By partial summation on \eqref{BT1} we can obtain
\begin{equation} \label{BT2}
\sum_{\substack{p \le t \\ p \in \p_n}} \frac{1}{p} \ll \frac{\log\log t}{\phi(n)}.
\end{equation}
Whenever $n/\phi(n)$ is bounded, as it will be whenever $n$ is a prime, prime power or a product of two prime powers, we can replace this bound with
\begin{equation} \label{BT}
\sum_{\substack{p \le t \\ p \in \p_n}} \frac{1}{p} \le \frac{c\log\log t}{n}
\end{equation}
for some absolute constant $c$. We include the $c$ because occasionally we require an inequality as opposed to an estimate. We will also require the following asymptotic from \cite[Theorem 1]{P}
\begin{equation}
\sum_{\substack{p \in \p_{n} \\ p \leq t}} \frac{1}{p}=\frac{\log \log t}{\phi(n)}+O\bigg(\frac{\log n}{\phi(n)}\bigg),
\end{equation}
which easily implies that
\begin{equation}\label{BT3}
\sum_{\substack{p \in \p_{n} \\ p \leq t}} \frac{1}{p-1}=\frac{\log \log t}{\phi(n)}+O\bigg(\frac{\log n}{\phi(n)}\bigg),
\end{equation}
since the difference is

\begin{align*}\sum\limits_{\substack{p \in \p_{n} \\ p \leq t}} \frac{1}{p(p-1)} \le \sum_{m=1}^{\infty}\frac{1}{mn(mn+1)}  < \frac{1}{n^2}\sum_{m=1}^{\infty}\frac{1}{m^2} \ll \frac{1}{n^2}.\end{align*}

\section{Required Propositions and Proof of Theorem \ref{MainTheorem}}\label{Props}

As mentioned previously, the main contribution to $\log(n/\lambda_k(n))$ will come from $\log(\phi_k(n)/\lambda_k(n)).$ Finding this term will involve a summation over prime powers which divide each of $\phi_k(n)$ and $\lambda_k(n).$ It turns out that the largest contribution to this term will come from small primes which divide $\phi_k(n).$ By small, we mean primes $q\le(\log\log x)^k=y^k.$ Hence we will split the sum into small primes and large primes $q>y^k.$ Therefore to prove Theorem \ref{MainTheorem} we will require the following propositions. The first summations deal with the large primes which divide $\phi_k(n)$ and the second involves the large primes whose prime powers divide $\phi_k(n).$ We will show that the contribution of these primes to the main sum is small and hence it will end up as part of the error term.

\begin{proposition}\label{prop1} $$\sum_{\substack{q>y^{k} \\ \nu_{q}(\phik)=1}} (\nu_{q}(\phik)-\nu_{q}(\lamk))\log{q} \ll y^{k}\psi(x)$$ for almost all $n \leq x$. \end{proposition}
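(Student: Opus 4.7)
The plan is to apply a first-moment argument followed by Markov's inequality. Because $\lambda_k(n)\mid\phi_k(n)$, on the support $\nu_q(\phi_k(n))=1$ the quantity $\nu_q(\phi_k(n))-\nu_q(\lambda_k(n))$ is $0$ or $1$ and equals $1$ exactly when $q\nmid \lambda_k(n)$, so the sum $T(n)$ in the statement is
\[
T(n)=\sum_{\substack{q>y^k\\ \nu_q(\phi_k(n))=1\\ q\nmid\lambda_k(n)}}\log q.
\]
It therefore suffices to prove the first-moment bound $\sum_{n\le x}T(n)\ll xy^k$, since Markov's inequality then gives $T(n)\ll y^k\psi(x)$ outside a set of size $\ll x/\psi(x)$.

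Interchanging the order of summation,
\[
\sum_{n\le x}T(n)\le\sum_{q>y^k}(\log q)\,N(q),\qquad N(q):=\#\{n\le x:q\mid\phi_k(n),\ q\nmid\lambda_k(n)\}.
\]
To control $N(q)$, I would apply the identities
\[
\nu_q(\phi(m))=(\nu_q(m)-1)_+ +\sum_{p\mid m}\nu_q(p-1),\quad \nu_q(\lambda(m))=\max\Bigl((\nu_q(m)-1)_+,\,\max_{p\mid m}\nu_q(p-1)\Bigr),
\]
with $m=\phi_{k-1}(n)$ and $m=\lambda_{k-1}(n)$ respectively. The event $q\mid \phi_k(n),\ q\nmid \lambda_k(n)$ then splits into two configurations: (i) $\nu_q(\phi_{k-1}(n))\ge 2$ and $\nu_q(\lambda_{k-1}(n))\le 1$, with no $p\mid \phi_{k-1}(n)$ satisfying $q\mid p-1$; or (ii) some prime $p_0\mid \phi_{k-1}(n)$ with $q\mid p_0-1$ satisfies $p_0\nmid \lambda_{k-1}(n)$.

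Case (i) is handled by bounding the number of $n\le x$ with $q^2\mid \phi_{k-1}(n)$. Iterating Brun--Titchmarsh \eqref{BT} over a prime chain of length $k-1$, with the squareness of $q$ controlled by \eqref{1_q2}, gives a count of the shape $xy^{k-1}/q^2$; summing $\log q$ times this over $q>y^k$ via \eqref{logq_q2} yields $\ll x/y$, which is far smaller than $xy^k$. Case (ii) is the substantive one. The side condition $p_0\mid \phi_{k-1}(n),\ p_0\nmid \lambda_{k-1}(n)$ is exactly the kind of discrepancy between $\phi$ and $\lambda$ iterates that the proposition quantifies one level down, so an induction on $k$ is natural: summing over $p_0\in\p_q$ via \eqref{BT} produces a $(\log q)/q$ saving from the congruence $p_0\equiv 1\pmod q$, and the remaining count of $n\le x$ is estimated via $k-2$ further Brun--Titchmarsh layers combined with the inductive hypothesis.

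The main obstacle is case (ii): balancing the accumulated factors of $y$ picked up from each of the $k-1$ layers of iterated Brun--Titchmarsh chains against the various $1/q$-savings from the congruence conditions, so that the total is dominated by $xy^k$. The cleanest organization is induction on $k$ with $k=2$ as the base case (where only configuration (i) contributes, because $\phi(n)$ and $\lambda(n)$ have the same prime support at the first level of iteration, so case (ii) is vacuous). The bookkeeping is standard for iterated arithmetic functions but requires careful accounting of the saving factors across the chain.
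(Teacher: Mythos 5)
Your skeleton --- first moment plus Markov, reducing everything to $\sum_{n\le x}T(n)\ll xy^k$ --- is exactly the paper's, and so is the underlying mechanism: the event $q\mid\phi_k(n)$, $q\nmid\lambda_k(n)$ must force a configuration carrying a $q^{-2}$ saving, after which $\sum_{q>y^k}(\log q)q^{-2}\ll y^{-k}$ absorbs all the powers of $y$ accumulated from iterated Brun--Titchmarsh. Where you differ is the organization. The paper classifies globally all the ways $q$ can divide $\phi_k(n)$ (its ``incarnations''), observes that when only one new prime of $\p_q$ is involved ($N=1$, i.e.\ a single chain $p_1\in\p_q,\dots,p_k\mid n$) one automatically has $q\mid\lambda_k(n)$ so the summand vanishes, and that $N\ge2$ gives a count $\ll xy^{2k}/q^2$ directly from Lemma \ref{NumberOfCases} (with $O_k(1)$ incarnations because $\nu_q(\phi_k(n))=1$ forces $N\le k$, $M\le k^2$). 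You instead peel off one iteration at a time with the valuation identities and push the discrepancy down a level by induction on $k$. Both work; the paper avoids an explicit induction at the price of the incarnation bookkeeping, while your recursion re-derives the same dichotomy level by level.

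Two points must be made precise for your version to close. First, the inductive statement cannot be the proposition one level down (a normal-order statement about $q>y^{k-1}$); it has to be a pointwise counting bound of the shape $\#\{n\le x:\ p_0\mid\phi_j(n),\ p_0\nmid\lambda_j(n)\}\ll_j xy^{O_j(1)}/p_0^{\,2}$, valid for \emph{every} prime $p_0$, with \emph{two} powers of $p_0$ in the denominator. Your phrase ``a $(\log q)/q$ saving from the congruence $p_0\equiv1\pmod{q}$'' suggests tracking only one power of $q$; but $\sum_{y^k<q\le x}(\log q)/q\sim\log x$, so a single saving is fatal. What rescues case (ii) is $\sum_{p_0\in\p_q}p_0^{-2}\ll q^{-2}$ applied to the inductive $p_0^{-2}$ bound, the base case $j=1$ being vacuous since $\phi(n)$ and $\lambda(n)$ share odd prime support, as you note. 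Second, in case (i) the event $q^2\mid\phi_{k-1}(n)$ is not produced only by a single chain headed by a prime of $\p_{q^2}$: the dominant configuration is two disjoint chains headed by two distinct primes of $\p_q$, giving $\ll xy^{2k-2}/q^2$ rather than $xy^{k-1}/q^2$, and one must also control the number of admissible configurations (here the ambient constraint $\nu_q(\phi_k(n))=1$ forces $\nu_q(\phi_{k-1}(n))\le2$, which keeps that number $O_k(1)$). Neither issue changes the outcome, since $xy^{2k-2}\cdot y^{-k}\ll xy^{k}$, but both must appear explicitly in a complete write-up.
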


\begin{proposition}\label{prop2} $$\sum_{\substack{q > y^{k}\\ \nu_{q}(\phik) \geq 2}} \nu_{q}(\phik) \log{q} \ll y^{k}\psi(x)$$ for almost all $n \leq x.$ \end{proposition}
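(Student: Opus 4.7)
The plan is to bound the total first moment
$$T := \sum_{n \le x} \sum_{\substack{q > y^k \\ \nu_q(\phik) \ge 2}} \nu_q(\phik) \log q$$
by $O(x y^k)$ and then invoke Markov's inequality; this will force the inner sum to exceed $y^k \psi(x)$ on at most $O(x/\psi(x))$ values of $n \le x$, which is precisely the desired conclusion. Using the elementary identity
$$a \cdot \mathbb{1}[a \ge 2] = 2 \, \mathbb{1}[a \ge 2] + \sum_{b \ge 3} \mathbb{1}[a \ge b] \qquad (a \in \mathbb{Z}_{\ge 0})$$
applied with $a = \nu_q(\phik)$, I reduce the problem to estimating $\sum_{q > y^k} \log q \cdot \#\{n \le x : q^b \mid \phik\}$ for each $b \ge 2$.

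The main input is a chain-based count. The condition $q \mid \phik$ forces the existence of a prime chain $q = p_0, p_1, \ldots, p_k$ with $p_i \mid p_{i+1} - 1$ at each level (the degenerate case $p_i = p_{i+1}$ requires an extra square divisibility further down the chain) and $p_k \mid n$. Iterated application of the sharp Brun--Titchmarsh bound \eqref{BT} (valid at each step since $p/\phi(p)$ is bounded for primes $p$) gives
$$\sum_{\text{chains}} \frac{1}{p_k} \ll \frac{y^k}{q},$$
where the sum ranges over tuples $(p_1, \ldots, p_k)$ with $p_i \in \p_{p_{i-1}}$ (setting $p_0 = q$) and $p_k \le x$. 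Hence $\#\{n \le x : q \mid \phik\} \ll x y^k / q$. For the stronger condition $q^b \mid \phik$ with $b \ge 2$, the dominant configuration consists of $b$ distinct chains sharing only the root $q$ and otherwise pairwise disjoint; together with the factor $1/b!$ from unordered selection this yields
$$\#\{n \le x : q^b \mid \phik\} \ll \frac{x \, y^{bk}}{b! \, q^b}.$$
Configurations involving a ``doubling'' somewhere --- $q^2 \mid p_1 - 1$ at the root, $p_i^2 \mid p_{i+1} - 1$ at an intermediate step, or $p_k^2 \mid n$ at the leaf --- each sacrifice at least one extra factor of $y/q$ and are therefore dominated by the main term.

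Plugging this estimate back in and using the tail bound $\sum_{q > y^k} \log q / q^b \ll 1/y^{k(b-1)}$ for $b \ge 2$ (which follows by partial summation on \eqref{Cheb}, extending \eqref{logq_q2} to higher moments), we obtain for each $b \ge 2$
$$\sum_{q > y^k} \log q \cdot \frac{x y^{bk}}{b! \, q^b} \ll \frac{x y^k}{b!},$$
and summing over $b \ge 2$ produces $T \ll x y^k \sum_{b \ge 2} 1/b! \ll x y^k$, as required. The main obstacle will be to carry out the chain count rigorously: one must (i) track the combinatorial factor $1/b!$ from unordered $b$-tuples of chains, as this is essential for summability over $b$, and (ii) verify that each non-standard configuration --- degenerate steps $p_i = p_{i+1}$, partial overlaps between chains, and higher-order prime-power divisibilities --- yields an extra saving of at least $1/q$ relative to the main $b$-chain term, so that those contributions are indeed swallowed by the $q > y^k$ tail.
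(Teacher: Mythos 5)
Your overall strategy --- first moment plus Markov's inequality, chain counting via iterated Brun--Titchmarsh, and a factorial gain from permuting the $b$ chains --- is the same machinery the paper uses, but the specific route you propose has a genuine gap, and in fact your key asserted estimate is not correct as stated.

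The core claim $\#\{n \le x : q^b \mid \phi_k(n)\} \ll x y^{bk}/(b!\,q^b)$ fails for large $b$. Consider the contribution of what the paper calls Case~1: if $q^{b+k} \mid n$ then automatically $q^b \mid \phi_k(n)$, and there are $\gg x/q^{b+k}$ such $n$ whenever $q^{b+k} \le x$. Comparing $x/q^{b+k}$ with your claimed bound requires $b! \ll q^k y^{bk}$, and for $b$ somewhat larger than $y^k$ (while still small enough that $q^{b+k} \le x$, a nonempty range for large $x$) this inequality reverses because $b!$ grows faster than $y^{bk}$ once $\log b > k\log y$. So the configurations you wave away as ``sacrificing at least one extra factor of $y/q$'' do \emph{not} do so uniformly in $b$: a factor of $x/q^2$ per power of $q$ beats $y^k/q$ once enough powers accumulate. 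Your final first-moment sum still converges because these large-$b$ contributions are individually tiny, but the intermediate per-$b$ bound you rely on is false, and your argument ``the dominant configuration is $b$ disjoint chains'' cannot be made rigorous as stated.

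The paper handles this precisely by \emph{not} attempting a pure first-moment bound over all $n$. It introduces the set $S_q$ of integers in Cases 1, 2, or Case 3 with $M \le k(N-1)$ (Lemma~\ref{Eliminate}), shows $\#S_q \ll xy^k/q^2$ and hence $\#\bigcup_{q>y^k} S_q \ll x/\psi(x)$, excludes these $n$ outright, and only then runs the first-moment argument over $n \notin S$. For such $n$, the only way $q^b$ can divide $\phi_k(n)$ is through a Case~3 incarnation with $M > k(N-1)$, and the combinatorial analysis then forces $m_{1,2} \ge a-k$, giving a $(a-k)!$ gain (not the optimistic $a!$ or $b!$ you assert, which is another overclaim: chains can merge, so you cannot always freely permute all $b$ roots). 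This dichotomy is the missing idea in your proposal: without it, you would also need to control the number of Case~3 incarnations contributing to a given power $b$, which a priori grows much too fast without the minimality and $M$-versus-$N$ bookkeeping the paper sets up. In short, your plan correctly identifies the ingredients but the ``main obstacle'' you flag at the end is not a matter of bookkeeping --- it requires the exclusion step and the $M$-versus-$k(N-1)$ dichotomy, which constitute the actual content of the paper's proof.
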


Since the main contribution will come from small primes dividing $\phi_k(n)$, the next propostion will show that the contribution of small primes dividing $\lambda_{k}(n)$ to the main sum can also be merged into the error term.

\begin{proposition}\label{prop3} $$\sum_{q\leq y^{k}} \nu_{q}(\lamk) \log{q} \ll y^{k}\psi(x)$$ for almost all $n \leq x.$ \end{proposition}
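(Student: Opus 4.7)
The plan is to bound the first moment
$$T := \sum_{n \le x} \sum_{q \le y^k} \nu_q(\lambda_k(n)) \log q$$
by $O(xy^k)$; Markov's inequality then yields a bad set of size $\ll x/\psi(x)$.

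The engine is the following recursive estimate, proved by induction on $m$: for every prime $p$, every $j \ge 1$, and every $0 \le m \le k$,
$$\#\{n \le x : p^j \mid \lambda_m(n)\} \ll_m \frac{xy^m}{\phi(p^j)}.$$
The base case $m=0$ is immediate since $\lfloor x/p^j\rfloor \le x/\phi(p^j)$. For the inductive step, set $M = \lambda_{m-1}(n)$ and use $\lambda(M) = \lcm_{r^a \| M}\lambda(r^a)$: the event $p^j \mid \lambda(M)$ forces either $p^{j+1} \mid M$ (or $p^{j+2} \mid M$ when $p = 2$) or the existence of a prime $r \mid M$ with $r \in \p_{p^j}$. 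The first alternative contributes $\ll xy^{m-1}/\phi(p^{j+1}) \le xy^{m-1}/\phi(p^j)$ by the induction hypothesis applied at $(p, j{+}1)$. The second is handled by a union bound together with the hypothesis at $(r, 1)$ for each such $r$:
$$\sum_{\substack{r \in \p_{p^j} \\ r \le x}} \#\{n \le x : r \mid \lambda_{m-1}(n)\} \ll \sum_{\substack{r \in \p_{p^j} \\ r \le x}} \frac{xy^{m-1}}{r-1} \ll \frac{xy^m}{\phi(p^j)},$$
the last step following from \eqref{BT2} after replacing $1/(r-1)$ by $2/r$.

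With this in hand I would unpack $\sum_{n \le x}\nu_q(\lambda_k(n)) = \sum_{j \ge 1}\#\{n \le x : q^j \mid \lambda_k(n)\}$ and split the $j$-sum at the threshold $\phi(q^j) \asymp y^k$. On the short range (which contains $O(\log y/\log q + 1)$ indices) I use the trivial bound $\le x$; on the tail I apply the recursive estimate and sum a geometric series, getting $O(x)$. Together these give $\sum_{n\le x}\nu_q(\lambda_k(n)) \ll x\log y/\log q + x$. Multiplying by $\log q$, summing over $q \le y^k$, and using the prime number theorem bound $\pi(y^k) \ll y^k/(k\log y)$ with the Chebyshev estimate \eqref{Cheb} yields $T \ll xy^k$, and Markov's inequality completes the proof.

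The main obstacle is setting up the induction cleanly so as to track every prime and every exponent uniformly; once it is in place, the small surgery for $\lambda(2^a)$ and the bookkeeping in the splitting argument are routine. The crucial saving that produces $y^k$ rather than $y^k\log y$ is that the $O(\log y/\log q)$ cost per prime $q$ of capping probabilities at $1$ is exactly compensated by the density of small primes through $\pi(y^k) \ll y^k/(k\log y)$.
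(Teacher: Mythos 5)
Your proposal is correct and takes essentially the same route as the paper. The key engine — the count $\#\{n\le x : p^j\mid\lambda_m(n)\}\ll xy^m/\phi(p^j)$ proved by induction through either $p^{j+1}\mid\lambda_{m-1}(n)$ or a prime $r\in\p_{p^j}$ dividing $\lambda_{m-1}(n)$ — is exactly the paper's Lemma~\ref{NumberOfCases2} (you start the induction at $m=0$ rather than $m=1$, and you explicitly flag the $p=2$ quirk, but these are cosmetic). Your final bookkeeping, splitting the $j$-sum at $\phi(q^j)\asymp y^k$ and then summing $x\log y + x\log q$ over $q\le y^k$ via $\pi(y^k)\ll y^k/(k\log y)$ and Chebyshev, is a slight rephrasing of the paper's cleaner device of writing the short range as $\sum_{m\le y^k}\Lambda(m)\ll y^k$, but it lands in the same place.
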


That will leave us with the contribution of small primes dividing $\phi_k(n).$ We will use an additive function to approximate this sum. Let $h_{k}(n)$ be the additive function defined by $$\hk(n) = \sum_{p_{1}\mid n} \sum_{p_{2}\mid p_{1}-1} \dots \sum_{p_{k} \mid p_{k-1}-1} \sum_{q \leq y^{k}} \nu_{q}(p_{k}-1)\log q.$$ The following propostion shows that the difference between the sum involving the small primes dividing $\phi_k(n)$ and the term $h_k(n)$ is small.

\begin{proposition}\label{prop4}  $$\sum_{q \leq y^{k}} \nu_{q}(\phik)\log{q} = \hk(n) + O(y^{k-1}\log y \cdot \psi(x))$$ for almost all $n \leq x$, \end{proposition}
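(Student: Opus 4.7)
The plan is to peel off the outermost $\phi$ using the pointwise identity $\nu_q(\phi(m)) = (\nu_q(m)-1)\mathbf{1}[q\mid m] + \sum_{p\mid m}\nu_q(p-1)$ with $m = \phi_{k-1}(n)$. Summing against $\log q$ over $q\le y^k$ gives the decomposition
\[\sum_{q\le y^k}\nu_q(\phi_k(n))\log q = T_k(n) + E_k(n),\]
where $T_k(n) := \sum_{p\mid \phi_{k-1}(n)}F_k(p)$ with $F_k(p) := \sum_{q\le y^k}\nu_q(p-1)\log q$, and $E_k(n) := \sum_{q\le y^k}(\nu_q(\phi_{k-1}(n))-1)^+\log q$ is a prime-square leftover. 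The trivial bound $E_k(n)\le \sum_{q\le y^k}\nu_q(\phi_{k-1}(n))\log q$, combined with a mean-value estimate showing the right-hand side is of order $y^{k-1}\log y$ on average, gives via Markov's inequality $E_k(n) = O(y^{k-1}\log y\cdot\psi(x))$ for all but $O(x/\psi(x))$ values of $n\le x$.

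The crux is comparing $T_k(n)$ with $h_k(n)$. Writing both as weighted prime-sums,
\[T_k(n) = \sum_p F_k(p)\mathbf{1}[p\mid\phi_{k-1}(n)],\qquad h_k(n) = \sum_p F_k(p)\,N(p,n),\]
where $N(p,n)$ denotes the number of chains $p_1\mid n,\, p_2\mid p_1-1,\, \ldots,\, p\mid p_{k-1}-1$, the difference $T_k(n) - h_k(n)$ splits into a \emph{double-chain} contribution (primes with $N(p,n)\ge 2$) and a \emph{prime-square detour} contribution (primes with $p\mid\phi_{k-1}(n)$ but $N(p,n)=0$, which must then satisfy $p^2\mid\phi_j(n)$ for some $j<k-1$). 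Iterating the inclusion $\{p : p\mid\phi_j(n)\} \subseteq \bigcup_{r\mid\phi_{j-1}(n)}\{p : p\mid r-1\}\cup\{p : p^2\mid \phi_{j-1}(n)\}$ from $j=k-1$ down to $j=1$ expresses both errors as nested sums over the tree rooted at $n$, which I would bound via the Brun--Titchmarsh estimates \eqref{BT2} and \eqref{BT3}: each error event at an intermediate level $j$ saves a factor $\asymp 1/y$ off the main term $h_k(n)=O(y^k\log y)$, yielding an overall error of order $y^{k-1}\log y\cdot\psi(x)$ after Markov.

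The principal obstacle is the double-chain estimate $\sum_p F_k(p)(N(p,n)-1)^+$: bounding it for almost all $n$ requires controlling multi-sums over pairs of distinct length-$(k-1)$ chains in the tree of $n$ that share a common prime divisor at their top level, an iterated Brun--Titchmarsh computation paired with the Mertens estimate \eqref{Merten} to handle the $F_k(p)$ weights.
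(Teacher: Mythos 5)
Your decomposition is, in substance, the paper's route: strip off one $\phi$ at a time via $\nu_q(\phi(m))=\max\{0,\nu_q(m)-1\}+\sum_{p\mid m}\nu_q(p-1)$, identify the full-chain (supersquarefree) contribution with $h_k(n)$, and dispose of the rest by a first-moment bound plus Markov. Your $E_k(n)$ is what the paper telescopes into $\sum_{p\mid\phi_j(n)}\nu_q(p-1)$ for $j\le k-2$ plus $\nu_q(n)$, and your ``prime-square detour'' primes are the paper's nssf primes; to make ``each error event saves a factor $1/y$'' precise there you will still need the substance of the incarnation bookkeeping, namely that $M\le mN$ with equality only in the supersquarefree case, since that inequality is where the saving actually comes from. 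Your explicit isolation of the double-chain term is, if anything, more careful than the paper, whose asserted lower bound $0\le\sum_{q\le y^k}\nu_q(\phik)\log q-h_k(n)$ tacitly identifies the chain-counted sum with the distinct-prime sum; the per-$q$ version of that inequality already fails for $n=29\cdot43$, $q=3$ (two chains end at $7$, yet $\nu_3(\phi_2(n))=1$), so the overcount genuinely has to be estimated.

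The genuine gap is in how you propose to estimate it. Bounding $(N(p,n)-1)^+$ by the number of pairs of distinct chains ending at $p$ and then averaging over $n$ fails for small $p$: already for $p=3$ a typical $n$ carries $N(3,n)\asymp y^{k-1}$ chains, so the pair count is $\asymp y^{2k-2}$ while $(N-1)^+\asymp y^{k-1}$, and the resulting mean over $n\le x$ is $\gg xy^{2k-2}$, which exceeds the target $xy^{k-1}\log y$ for every $k\ge2$. The repair is to split on the size of $p$. For $p\le y^{k-1}$ use the trivial bound $(N-1)^+\le N$ together with $\sum_{n\le x}N(p,n)\ll x(cy)^{k-1}/p$ (iterated \eqref{BT}) and $\sum_{p\le y^{k-1}}F_k(p)/p\le\sum_{p\le y^{k-1}}(\log p)/p\ll\log y$ by \eqref{Merten}; this already yields $\ll xy^{k-1}\log y$. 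Reserve the pair count for $p>y^{k-1}$, where the extra factor $y^{k-1}/p$ it costs is at most $1$ and $\sum_{p>y^{k-1}}y^{2k-2}(\log p)/p^2\ll y^{k-1}$ closes the estimate; pairs of chains that share a prefix below $p$ still need a second-moment computation of the type carried out in Lemma~\ref{Doubles}. With that split your plan goes through.
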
 That leaves us with $\log(\phi_k(n)/\lambda_k(n))$ being approximated by $h_k(n).$ The last proposition will obtain an asymptotic formula for $h_k(n).$ From there we will have enough armoury to tackle Theorem \ref{MainTheorem}.

\begin{proposition}\label{hkestimate} $$\hk(n) = \frac{1}{(k-1)!} y^{k} \log y + O(y^{k})$$ for almost all $n \leq x.$ \end{proposition}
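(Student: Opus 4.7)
The plan is a Turán--Kubilius-type argument: compute the mean $(1/x)\sum_{n\le x}h_k(n)$, bound the variance, and apply Chebyshev's inequality. Since $h_k(n)=\sum_{p_1\mid n}F(p_1)$ is strongly additive, swapping the order of summation yields $(1/x)\sum_{n\le x}h_k(n)\approx\sum_{q\le y^k,\,a\ge 1}\log q\cdot W_k(q^a)$, where $W_j(r)$ denotes the sum of $1/p_1$ over chains of primes $p_1>p_2>\cdots>p_j$ with $p_1\le x$, $p_{i+1}\mid p_i-1$ for $1\le i<j$, and $r\mid p_j-1$.

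The main step is to prove by induction on $j$ that
\begin{equation*}
W_j(r)\;=\;\frac{(y-\log\log r)^{j}}{j!\,\phi(r)}\;+\;(\text{lower-order error}).
\end{equation*}
The base case $W_1(r)=\sum_{p_1\equiv 1\,(r),\,p_1\le x}1/p_1\approx(y-\log\log r)/\phi(r)$ follows from partial summation applied to the prime number theorem in arithmetic progressions (cf.\ \eqref{BT3}). For the inductive step, $W_j(r)=\sum_{p_j\in\p_r,\,p_j\le x}W_{j-1}(p_j)$ is handled by partial summation using density $d(\log\log t)/\phi(r)$: the resulting integral $(1/\phi(r))\int_{\log\log r}^{y}(y-u)^{j-1}/(j-1)!\,du$ evaluates to $(y-\log\log r)^{j}/(j!\,\phi(r))$. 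The factorial denominator arises because the chain $r<p_j<\cdots<p_1\le x$ parametrizes an ordered simplex. Taking $j=k$ and $r=q^a$ (so $\log\log q^a\ll y$ when $q\le y^k$), then summing against $\log q$ with $\sum_{a\ge 1}1/\phi(q^a)=q/(q-1)^2$ and Mertens' formula $\sum_{q\le y^k}\log q/q=k\log y+O(1)$, gives
\begin{equation*}
\frac{1}{x}\sum_{n\le x}h_k(n)\;=\;\frac{y^k}{k!}\cdot k\log y\,+\,O(y^k)\;=\;\frac{1}{(k-1)!}\,y^k\log y\,+\,O(y^k).
\end{equation*}

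For concentration, expand $h_k(n)^2$ as a double sum over pairs of chains and swap order. The off-diagonal contribution ($p_1\ne p_1'$) reproduces $E[h_k]^2$, while the diagonal $\sum_{p_1\le x}F(p_1)^2/p_1$ yields to the same iterated partial-summation treatment; the extra factor of $F$ contributes at most one more power of $y\log y$, so the variance is $\ll y^{2k-1}(\log y)^2$. Chebyshev with threshold $y^k$ then places the exceptional set at size $\ll x(\log y)^2/y$, which is $o(x/\psi(x))$ since $\psi(x)=o(\log y)$. The main technical obstacle is the error tracking through the iterated partial summation: each use of \eqref{BT3} introduces an $O(\log p_j/\phi(p_j))$ correction, and these must accumulate to only $O(y^k)$ rather than disrupt the $y^k\log y$ main term. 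A secondary subtlety is the regime where some $p_j$ approaches $x$, so $\log\log p_j$ is comparable to $y$; this is handled by retaining the sharp $(y-\log\log p_j)$ expression from the integration, or by splitting off the large-$p_j$ tail and using Brun--Titchmarsh \eqref{BT1}.
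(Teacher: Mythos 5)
Your overall strategy coincides with the paper's: treat $\hk$ as an additive function, establish the first moment asymptotic and a second-moment (variance) upper bound, and conclude by a Tur\'an--Kubilius/Chebyshev argument; your simplex heuristic $W_j(r)\approx (y-\log\log r)^j/(j!\,\phi(r))$ and the final bookkeeping $\frac{y^k}{k!}\cdot k\log y=\frac{y^k\log y}{(k-1)!}$ reproduce exactly the main term of Proposition \ref{M1}. The deduction of Proposition \ref{hkestimate} from the two moment estimates is also fine. The gap is in the inductive evaluation of $W_j(r)$, and it is not merely "error tracking." The asymptotic \eqref{BT3} carries an error $O(\log n/\phi(n))$ in the modulus $n$, and in your bottom-up recursion the modulus at step $j$ is the prime $p_j$, which ranges up to $x$. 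Summing that error over $p_j\in\p_{p_{j+1}}$, $p_j\le x$ produces a quantity of size roughly $\log x/\phi(p_{j+1})$, which overwhelms the intended main term $y^{j}/\phi(p_{j+1})$ by a factor of about $\log x/(\log\log x)^{j}$; worse, for $p_j>x^{1/2}$ no individual asymptotic for primes in the progression $1\bmod p_j$ exists unconditionally, so no choice of error term in \eqref{BT3} can rescue the step. Your proposed repairs do not close this: keeping the sharp $(y-\log\log p_j)$ does nothing for the error term, and Brun--Titchmarsh \eqref{BT1} applied to the tails loses a factor of $\log t$ at the innermost count $\pi(t;p_2,1)$ when $p_2$ is close to $t$ (the trivial bound $t/p_2$ is then all one has), which again destroys the main term.

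This is precisely why the paper's proof of Proposition \ref{M1} is structured so differently: it truncates each variable to $p_i\le t^{1/3^{i-1}}$ and bounds the complementary ranges by Brun's multidimensional sieve (Lemma \ref{Sums}(b), via Halberstam--Richert), which exploits the \emph{simultaneous} primality of the whole chain to gain the extra powers of $\log t$ that Brun--Titchmarsh alone cannot provide; it then replaces $\pi(v;p_{k-l+2},1)$ by $\li(v)/(p_{k-l+2}-1)$ only on average over the small moduli $p_{k-l+2}\le v^{1/3}$ using Bombieri--Vinogradov (Lemma \ref{BVreduction}); and it runs the recursion top-down (Lemma \ref{Recursion}), reserving \eqref{BT3} for the base case where the modulus is the small prime power $q^a$. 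A parallel gap sits in your variance bound: the assertion that the extra factor of $F$ "contributes at most one more power of $y\log y$" presumes $\hk(p)\ll y^{k-1}\log y$ pointwise, which is false in general; the paper instead carries out a genuine double-chain sieve analysis (Lemmas \ref{Doubles} and \ref{DoubleSum}) to prove Proposition \ref{M2}. So while your outline identifies the right theorem shape and the right constants, the two analytic inputs it relies on --- an iterated individual asymptotic for primes in progressions to large prime moduli, and a pointwise bound on $\hk(p)$ --- are both unavailable, and replacing them is the bulk of the paper's work.
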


\begin{proof}[Proof of Theorem \ref{MainTheorem}] We start by breaking down the function $\log (n/\lambda_{k}(n)).$
$$\log\bigg(\frac{n}{\lambda_{k}(n)}\bigg)=\log\bigg(\frac{n}{\phi(n)}\bigg)+\log\bigg(\frac{\phi(n)}{\phi_{2}(n)}\bigg)+\dots+\log\bigg(\frac{\phi_{k-1}(n)}{\phi_{k}(n)}\bigg)+\log\bigg(\frac{\phi_{k}(n)}{\lambda_{k}(n)}\bigg).$$ Using the lower bound $\phi(m) \gg m/\log\log m,$ see \cite[Theorem 2.3]{MV} we have that
$$\log\bigg(\frac{n}{\phi(n)}\bigg)+\log\bigg(\frac{\phi(n)}{\phi_{2}(n)}\bigg)+\dots+\log\bigg(\frac{\phi_{k-1}(n)}{\phi_{k}(n)}\bigg) \ll \log\log\log n$$ and so
$$\log\bigg(\frac{n}{\lambda_{k}(n)}\bigg)=\log\bigg(\frac{\phi_{k}(n)}{\lambda_{k}(n)}\bigg)+O(\log\log\log n).$$ In fact we could have used a more precise estimate for $\phi_{i}(n)/\phi_{i+1}(n)$ for $i\ge 1$ which can be found in \cite{EGPS} but the one we used is more than good enough. Next we break down the remaining term into summations. We will break it up into small primes and large primes.
\begin{align*}
\log\bigg(\frac{\phi_{k}(n)}{\lambda_{k}(n)}\bigg) &= \sum_{\substack{q>y^{k}}} (\nu_{q}(\phik)-\nu_{q}(\lamk))\log{q} + \sum_{\substack{q \le y^{k}}} (\nu_{q}(\phik)-\nu_{q}(\lamk))\log{q} \\
& = \sum_{\substack{q>y^{k} \\ \nu_{q}(\phik)=1}} (\nu_{q}(\phik)-\nu_{q}(\lamk))\log{q} + \sum_{\substack{q>y^{k} \\ \nu_{q}(\phik)\ge 2}} (\nu_{q}(\phik)-\nu_{q}(\lamk))\log{q} \\ & \qquad + \sum_{q\leq y^{k}} \nu_{q}(\phi_{k}(n)) \log{q}-\sum_{q\leq y^{k}} \nu_{q}(\lamk) \log{q}.
\end{align*}
Note that if $a \mid b$, then $\lambda(a) \mid \phi(b)$ since $\lambda(a) \mid \phi(a) \mid \phi(ma)$ for any $m$. This quickly implies that $\lamk$ always divides $\phik$ for all $k$ and so we get 
$$0\le \sum\limits_{\substack{q>y^{k} \\ \nu_{q}(\phik)\ge 2}} (\nu_{q}(\phik)-\nu_{q}(\lamk))\log{q} \le \sum\limits_{\substack{q>y^{k} \\ \nu_{q}(\phik)\ge 2}} (\nu_{q}(\phik)\log{q}.$$ Using Propositions \ref{prop1},\ref{prop2},\ref{prop3} and \ref{prop4} we get
$$\log\bigg(\frac{n}{\lambda_{k}(n)}\bigg) = \hk(n)+O\bigg(y^{k}\psi(x)  \bigg)$$ for almost all $n \le x$. Finally by using Proposition \ref{hkestimate} we get 
$$\log\bigg(\frac{n}{\lambda_{k}(n)}\bigg) = \frac{1}{(k-1)!} y^{k} \log y + O\bigg(y^{k}\psi(x)  \bigg)$$ for almost all $n \le x,$ finishing the proof of Theorem \ref{MainTheorem}.
\end{proof}

\section{Prime Power Divisors of $\phi_{k}(n)$}\label{Intro}

For various reasons thoughout this paper, we are concerned with the number of $n \le x$ such that $q^{a}$ can divide $\phik$. We will analyze a few of those situations here:

Case 1: $q^{2} \mid n.$ Clearly the number of such $n$ is at most $\frac{x}{q^{2}}.$

Case 2: There exists $p_{1} \in \p_{q^{2}},p_{2} \in \p_{p_{1}},p_{3} \in \p_{p_{2}},...,p_{l} \in \p_{p_{l-1}}$ where $p_{l} \mid n.$ By using \eqref{BT} repeatedly we get that the number of such $n$ is
\begin{align*}\sum_{n \leq x}  \sum_{p_{1} \in \p_{q^{2}}}  \sum_{p_{2} \in \p_{p_{1}}}...\sum_{\substack{{p_{l} \in \p_{p_{l-1}}}\\ p_{l}|n}} 1 &= \sum_{p_{1} \in \p_{q^{2}}}  \sum_{p_{2} \in \p_{p_{1}}}...\sum_{\substack{{p_{l} \in \p_{p_{l-1}}}\\ n \le x  \\ p_{l}|n}} 1 \\
&\ll \sum_{p_{1} \in \p_{q^{2}}}  \sum_{p_{2} \in \p_{p_{1}}}...\sum_{{p_{l} \in \p_{p_{l-1}}}} \frac{x}{p_{l}} \\
&\ll \sum_{p_{1} \in \p_{q^{2}}}  \sum_{p_{2} \in \p_{p_{1}}}...\sum_{{p_{l-1} \in \p_{p_{l-2}}}} \frac{xy}{p_{l-1}} \\
&\ll \sum_{p_{1} \in \p_{q^{2}}}  \sum_{p_{2} \in \p_{p_{1}}} \frac{xy^{l-2}}{p_{2}} \\
&\ll \sum_{p_{1} \in \p_{q^{2}}} \frac{xy^{l-1}}{p_{1}} \\
&\ll \frac{xy^{l}}{q^{2}}\end{align*}
Now that we've taken care of any case where $p \in \p_{q^{2}}$, we are just left with the possibilities not containing any powers of $q$. Unfortunately these cases still allow for many possibilities which we will display in an array. There are lots of ways for a prime power $q^a$ to arise in $\phi_k(n)$ we now define various sets of primes that are involved in generating these powers of $q$, and we will eventually sum over all possibilities for these sets of primes.  The set $\LL_{h,i}$ will denote a finite set of primes. To begin, the set $\LL_{1,2}$ will be an arbitrary finite set of primes in $\p_q$ and let $\LL_{1,1}$ be empty.  That is:

Case 3: \\

Level (1,2)
$$\LL_{1,2} \subseteq \p_q.$$

Level (2,1) (Obtaining the primes in the previous level)

$\LL_{2,1}$ is any set of primes with the property that for all $p \in \LL_{1,1}\cup \LL_{1,2},$ there exists a unique prime $r \in \LL_{2,1}$ such that $r \in \p_p.$ In other words $p$ will divide $\phi(r)$ and hence the primes in $\LL_{2,1}$ will create the primes in $\LL_{1,1}\cup \LL_{1,2}.$

Level (2,2) (New primes in $\p_q$)
$$\LL_{2,2} \subseteq \p_q.$$ In general for all $1 < h \le k$ we have for all $p \in \LL_{h-1,1}\cup \LL_{h-1,2} $ there exists a unique prime $r \in \LL_{h,1}$ such that $r \in \p_p, \LL_{h,2}$ is an arbitrary subset of $\p_q$, and 
$$r\in \LL_{k,1}\cup\LL_{k,2} \Rightarrow r \mid n.$$

Some description of the terms are in order including some helpful definitions.

\begin{definition}
An incarnation $I$ of Case 3 is some specified description of how the primes  in a lower level create the primes in the level directly above.
\end{definition}
For example, for $k=3$, an incarnation $I$ for which $q^4 \mid \phi_3(n)$ would be $s_1,s_2,s_3,r_3,r_4 \in \p_q$ where $r_1 \in \p_{s_1},r_2\in\p_{s_2s_3}, p_1 \in \p_{r_1r_2},p_2 \in \p_{r_3r_4},$  with $p_1p_2\mid n.$

\begin{definition}
An subincarnation of $I$ is an incarnation with added conditions. In other words if $J$ is a subincarnation of $I$ and an integer $n$ satisfies incarnation $J,$ then it will also satisfy incarnation $I.$
\end{definition}
For example, $I$ is a subincarnation of the incarnation $s_1,s_3,r_3,r_4 \in \p_q$ where $r_1 \in \p_{s_1},r_2\in\p_{s_3}, p_1 \in \p_{r_1r_2},p_2 \in \p_{r_3r_4},$  with $p_1p_2\mid n.$ 

Let $p$ be a prime in $\LL_{h,i}$ which we need to divide $\phi_{k-h+1}(n)$. The definition of $\LL_{h,i}$ ensures that there is a unique prime dividing $\phi_{k-h}(n)$ for which $p \mid r-1$. The primes in levels $(k,1),(k,2)$ dividing $n$ are for the base case of the recursion, so that each prime divides $\phi_0(n)=n$. When $i=2$ we are introducing new primes to get greater powers of $q$ in $\phik$. Note that it's not necessary to have any primes on the levels $(i,2).$ In fact the ``worst case scenario" that we will see has no primes on these except Level (1,2).

Now that we've described the way to get $q^{a} \mid \phik$, what is our exponent $a?$  Let $m_{h,i}=\#\LL_{h,i}.$ From the recursion above we can see that $q^{m_{k,2}}\mid \phi(n)$ and so do the primes in $\LL_{k-1,1}.$
For the second iteration of $\phi$, $q^{m_{k,2}-1+m_{k-1,2}}\mid \phi_2(n)$ and so do the primes in $\LL_{k-2,1}.$ Hence the power of $q$ which divides $\phi_{k}(n)$ is
\begin{equation}\label{H}\max_{1 \le j \le k} (m_{1,1}+\sum_{2 \le h \le j} (m_{h,2}-1))\end{equation}
where the sum can be empty if there are no primes in the second level $(j,2)$ or there are not enough to survive, i.e. $m_{j,2}<j-1$ and hence $q \nmid \phi_j(\prod_{\LL_{j,2}}p).$ Without loss of generality, we can assume the former, since the later is a subincarnation of the former. 

Now we'll introduce some notation to be used in future propositions. For any single incarnation of Case 3, let $M$ be the total number of primes, $N$ be the total new primes introduced at the levels $(h,2)$ and $H$ be the maximum necessary level $(h,2).$ Specifically 

$$M= \sum_{h}(m_{h,1}+m_{h,2})~~N=\sum_{h\le H}m_{h,2}$$ and $H$ yields the maximum value in \eqref{H}. Note that under this notation, $q^{N-H+1}\mid \phik.$ For example, in the incarnation $I$ above, $$\LL_{1,2}=\{s_1,s_2,s_3\},\LL_{2,1}=\{r_1,r_2\},\LL_{2,2}=\{r_3,r_4\},\LL_{3,1}=\{p_1,p_2\},\LL_{3,2}=\emptyset$$ as well as $$m_{1,2}=3,m_{2,1}=2,m_{2,2}=2,m_{3,1}=2,m_{3,2}=0.$$ Hence $M=9,N=5,H=2$ and so the power of $q$ which divides $\phi_3(n)$ is $5-2+1=4$ as expected.

Now that we've described Case 3, how many possible $n$ are in that case?

\begin{lemma}\label{NumberOfCases}
The number of $n \le x$ satisfying any incarnation of Case 3 is
$$O\bigg(c^M\frac{xy^{M}}{q^{N}}\bigg)$$
where $c$ is the constant from equation \eqref{BT}. 
\end{lemma}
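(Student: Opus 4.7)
The plan is to bound the count by an iterated sum, peeling off one level of primes at a time from the top (level $k$) down to level $1$, and applying the Brun--Titchmarsh estimate \eqref{BT} to each prime summation. The crucial observation is that every modulus that appears is a single prime---either $q$ (for primes in some $\LL_{h,2}\subseteq \p_q$) or a ``parent'' prime from the preceding level (for primes in some $\LL_{h,1}$)---so \eqref{BT} applies with its absolute constant $c$.

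Concretely, I would rewrite the count as
\[
\sum_{\LL_{1,2}} \sum_{\LL_{2,1},\LL_{2,2}} \cdots \sum_{\LL_{k,1},\LL_{k,2}} \#\bigl\{n \le x : p \mid n \text{ for all } p \in \LL_{k,1}\cup \LL_{k,2}\bigr\},
\]
with the sums restricted by the incarnation. The innermost count is $\le x/\prod_{p \in \LL_{k,1}\cup\LL_{k,2}} p$. To peel level $k$: each of the $m_{k,2}$ primes in $\LL_{k,2}$ ranges over $\p_q$, and \eqref{BT} gives a factor at most $cy/q$; each of the $m_{k,1}$ primes in $\LL_{k,1}$ ranges over $\p_r$ for its unique parent $r\in \LL_{k-1,1}\cup\LL_{k-1,2}$, contributing $cy/r$ via \eqref{BT}. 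The $1/r$ factors reassemble into $\prod_{r\in \LL_{k-1,1}\cup\LL_{k-1,2}} 1/r$, which is precisely the shape required to peel level $k-1$. Iterating down to level $1$ (where $\LL_{1,1}$ is empty and the $m_{1,2}$ primes of $\LL_{1,2}$ each contribute another $cy/q$) yields the total bound
\[
x\cdot (cy)^M \cdot q^{-\sum_{h=1}^{k}m_{h,2}} = c^M x y^M q^{-\sum_{h=1}^{k} m_{h,2}}.
\]
Since $\sum_{h=1}^k m_{h,2} \ge \sum_{h \le H} m_{h,2} = N$, this is at most $c^M xy^M/q^N$, as claimed.

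The main obstacle will be bookkeeping. After processing levels $k,\ldots,h+1$, one must verify that the leftover $\prod 1/r$ over the primes of $\LL_{h,1}\cup\LL_{h,2}$ is exactly the factor that starts the peeling of level $h$, so that the induction closes cleanly; this relies on the fact that by construction $m_{h,1} = m_{h-1,1}+m_{h-1,2}$ with a bijection of parents and children. One must also check that every prime in the tree is at most $x$, so that \eqref{BT} may be summed up to $t=x$; this is immediate because every prime $p$ in the tree either divides $n\le x$ or divides $r-1$ for some prime $r$ higher in the tree, and inductively all such $r$ are $\le x$.
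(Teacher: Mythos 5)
Your argument is correct and is essentially the paper's own proof: the same iterated sum over the levels, peeled from the top down with Brun--Titchmarsh \eqref{BT} at each prime, yielding $x(cy)^{M}q^{-\sum_{h}m_{h,2}} \le c^{M}xy^{M}/q^{N}$. One small correction to your bookkeeping remark: the construction gives only $m_{h,1}\le m_{h-1,1}+m_{h-1,2}$ rather than equality with a bijection (a single parent prime may generate several children, as with $r_2\in\p_{s_2s_3}$ in the paper's example), but the peeling still closes because applying \eqref{BT} to $r\in\p_{pp'}$ produces the factor $cy/(pp')$ covering all of $r$'s children at once.
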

\begin{proof}
Let $\LL_h = \LL_{h,1}\cup\LL_{h,2}.$ We use Brun-Titchmarsh \eqref{BT} for all the primes at each level of Case 3, so the number of $n$ is

\begin{align*}\sum_{n \leq x}  \sum_{p_1 \in \LL_1}  \sum_{p_2 \in \LL_2}\dots\sum_{p_k \in \LL_k} 1 &= \sum_{p_1 \in \LL_1}  \sum_{p_2 \in \LL_2}\dots \sum_{p_{k} \in \LL_{k}} \sum_{\substack{p_{k} \mid n \\ n \le x}}1 \\
&\ll \sum_{p_1 \in \LL_1}  \sum_{p_2 \in \LL_2}\dots \sum_{p_{k} \in \LL_{k}}\frac{x}{\prod_{p_{k} \in \LL_{k}}p_k}.\end{align*} Note that we have repeatedly counted the same primes in the sum as we can reorder the primes in each level. It won't be important here, but will need to be more carefully addressed later. 
Since the primes in level $(k,1)$  gave us some $p_k \in \p_{p_{k-1}}$ for all the primes in $\LL_{k-1}$, and for $p \in \LL_{k,k}$ we have $p \in \p_{q}.$ By Brun--Titchmarsh \eqref{BT} we get that the above sum is
$$\ll \sum_{p_1 \in \LL_1}  \sum_{p_2 \in \LL_2}\dots \sum_{p_{k-1} \in \LL_{k-1}}\frac{x(cy)^{m_{k,1}+m_{k,2}}}{\prod_{p_{k-1} \in \LL_{k-1}}p_{k-1}q^{m_{k,2}}}.$$
Once again we get $m_{k-1,1}+m_{k-1,2}$ new applications of Brun-Titchmarsh giving the new primes in level $k-2$ as well as $m_{k-1,2}$ new powers of $q$. Continuing along in this manner we get:

\begin{align*}&\ll \sum_{p_1 \in \LL_1}  \frac{x(cy)^{\sum_{2 \leq i \le k}(m_{i,1}+m_{i,2})}}{\prod_{p_{1} \in \LL_{1}}p_1q^{\sum_{2 \leq i \le k}m_{i,2}}} \\
&\ll \frac{x(cy)^{\sum_{1 \leq i \le k}(m_{i,1}+m_{i,2})}}{q^{\sum_{1 \leq i \le k}m_{i,2}}} = \frac{x(cy)^{M}}{q^{N}}.\end{align*}
\end{proof}

The last thing we'll consider in this section about the ways to obtain $\phi_k(n)$ is to determine the number of possible incarnations of Case 3. We note that there are lots of incarnations which are subincarnations of others. We will develop a concept of minimality. 

\begin{definition}An incarnation of Case 3 is minimal if it does not contain any strings of $p_1 \in \p_{p_2},p_2\in \p_{p_3} \dots p_{k-1} \in \p_{p_k}$ where $p_k \mid n$.
\end{definition}
Note that any incarnation of Case 3 is a subincarnation of a minimal one. We now use this concept to show the number of necessary incarnations of Case 3 is small.

\section{Large Primes Dividing $\phi_k(n)$}\label{large}
In this section we will prove the two propostions dealing with $q$ being large. We'll start with the propostion where $\nu_{q}(\phik)=1.$
\begin{proof}[Proof of Proposition \ref{prop1}]
It suffices to show 
$$\sum_{n \leq x}\sum_{\substack{q>y^{k} \\ \nu_{q}(\phik)=1}}  (\nu_{q}(\phik)-\nu_{q}(\lamk))\log{q} \ll xy^{k}$$ as then there are at most $O\big(\frac{xy^{k}}{y^{k}\psi(x)}\big)= O\big(\frac{x}{\psi(x)}\big)$ such $n$ where the bound for the sum in Proposition \ref{prop1} fails to hold. We examine the cases where  $\nu_{q}(\phik)=1$. 
Using the notation in Lemma~\ref{NumberOfCases} we have two subcases for Case 3, whether $N=1$ or $N>1$.

Suppose $N=1$, then $H=1$, $m_{1,2}=1$ and $m_{h,2}=0$ for $1<h \le k$. Since $m_{h,1}\le m_{h-1,1}+m_{h-1,2}$ we get $m_{h,1} \le 1$ for all $1 \le h \le k$. Hence $m_{h,1} = 1$ for all $h \le k$ and so we get the case:

$$p_{1} \in \p_{q},p_{2} \in \p_{p_{1}},p_{3} \in \p_{p_{2}},\dots,p_{k} \in \p_{p_{k-1}}$$
where $p_{k} \mid n$.
However in this case we also get $\nu_{q}(\lamk))=1$ giving us no additions to our sum.

Suppose $N>1$, then $M= \sum_{h}(m_{h,1}+m_{h,2}) \le k\sum_{h}m_{h,2} = kN$  so the number of cases we get are 

$$O\bigg(c^M\frac{xy^{M}}{q^{N}}\bigg) \ll \frac{c^Mxy^{kN}}{q^{N}} \ll \frac{c^Mxy^{2k}}{q^{2}}$$ 
since $y > q^{k}.$ Since $v_q(\phi_k(n))=N-H+1$ and $H\le k$, $N\le k$ implying that $M\le k^2$.Hence $c^M$ is bounded as a function of $k.$ Also since $M$ is bounded in terms of $k,$ there are $O_k(1)$ possible incarnations of Case 3, and the bound already absorbs the possiblities from Cases 1 and 2. Hence we have

\begin{align*}\sum_{q>y^{k}}\sum_{\substack{n \leq x \\ \nu_{q}(\phik)=1}}  (\nu_{q}(\phik)-\nu_{q}(\lamk))\log{q} &\leq \sum_{q>y^{k}}\sum_{\substack{n \leq x \\ \nu_{q}(\phik)=1 \\ N>1}} \log{q} \\
&\ll  \sum_{q>y^{k}} \frac{xy^{2k}\log q}{q^{2}} \\ &\ll xy^{k}\end{align*} by \eqref{logq_q2}.
\end{proof}

We turn our attention to $v_q(\phi_k(n))>1.$ We have to be more careful here since we can't guarantee that the number of incarnations of Case 3 is $O_k(1).$ We'll start by proving a lemma which can eliminate a lot of those cases.

\begin{lemma}\label{Eliminate}
Let $q>y^k$ and $S_q=S_{q}(x)$ consist of all $n \le x$ such that Case 1,2 or Case 3 where $M \le k(N-1)$ occurs. Then 
$$\#S_q \ll \frac{xy^k}{q^2}$$
\end{lemma}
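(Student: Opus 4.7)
The plan is to bound $\#S_q$ by treating the three contributing cases separately, showing each yields $O(xy^k/q^2)$. Case~1 is immediate since $\#\{n \le x : q^2 \mid n\} \le x/q^2 \le xy^k/q^2$. For Case~2, the bound $O(xy^l/q^2)$ established earlier is valid for each chain length $l$; since a chain longer than $k$ cannot contribute to $q^2 \mid \phi_k(n)$, I may restrict to $l \le k$ and sum to obtain $O(xy^k/q^2)$.

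For Case~3 with $M \le k(N-1)$, I would invoke Lemma~\ref{NumberOfCases}, which bounds the count for a single incarnation by $O(c^M xy^M/q^N)$. The key estimate uses both hypotheses together:
\[
\frac{y^M}{q^N} \le \frac{y^{k(N-1)}}{q^N} = \frac{y^k}{q^2} \cdot \frac{y^{k(N-2)}}{q^{N-2}} \le \frac{y^k}{q^2},
\]
where the final inequality holds because $q > y^k$ forces $q^{N-2} > y^{k(N-2)}$ for $N \ge 2$, and $N \ge 2$ is itself forced by $M \le k(N-1)$ combined with $M \ge 1$. Each incarnation therefore contributes $O(c^M xy^k/q^2)$, with a geometric decay factor $(y^k/q)^{N-2}$ in reserve when $N \ge 3$. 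Summing over incarnations---restricted to minimal ones via the subincarnation principle---then controls the total, with the polynomially-in-$N$ many minimal shapes absorbed by the decay factor.

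The main obstacle is handling the exponential factor $c^M$ from Lemma~\ref{NumberOfCases}. Since $M$ can be as large as $k(N-1)$, we have $c^M \le c^{kN}$, so the geometric series in $N$ with effective ratio $(cy)^k/q$ converges only when $q > (cy)^k$, which is slightly stronger than the hypothesis $q > y^k$. For $q$ well above $y^k$ this is automatic; for $q$ in the narrow regime $y^k < q \le (cy)^k$, I would argue more carefully that the number of minimal shapes is bounded by a constant depending only on $k$, so that the overall $c^M$ contribution can be absorbed into the $k$-dependent implied constant, completing the bound.
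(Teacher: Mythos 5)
Your reduction of Cases 1 and 2 is fine, and your observation that for a single incarnation of Case 3 the hypotheses $M \le k(N-1)$ and $q > y^k$ give $y^M/q^N \le y^k/q^2$ is exactly right. You also correctly identify the real difficulty: Lemma~\ref{NumberOfCases} carries a factor $c^M$, and if one tries to sum a naive geometric series over $N$ with ratio $(cy)^k/q$, convergence is only guaranteed for $q > (cy)^k$, strictly stronger than the hypothesis. That is a genuine obstruction, and you are right to flag it.

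However, you do not actually overcome it. You conclude by saying you ``would argue more carefully that the number of minimal shapes is bounded by a constant depending only on $k$,'' but that claim is precisely the content of the lemma, and it is the step the paper spends its entire proof establishing; deferring it is leaving the proof unfinished. The paper's argument is not that the count of minimal shapes grows polynomially in $N$ (your ``polynomially-in-$N$ many minimal shapes'' is an unproved and unneeded assertion, and the geometric-decay route it feeds into is the very one you then concede may fail). Instead the paper shows that the only incarnations one ever \emph{needs} are ones with $N$ bounded in terms of $k$: given any incarnation with $M \le k(N-1)$, one first passes to a minimal subincarnation without changing the sign of $M - k(N-1)$ (each removed length-$k$ string drops $N$ by $1$ and $M$ by $k$); one then repeatedly deletes primes from the sets $\LL_{i,2}$, each deletion lowering $N$ by $1$ and $M$ by at most $k$, until landing in the window $k(N-2) < M \le kN$; and then minimality is used a second time (a deletion shrinks $M$ by at most $k-1$ in a minimal incarnation) to force $N \le 2k$, hence $M \le kN \le 2k^2$. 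With $N$ and $M$ bounded by constants depending only on $k$, there are only $O_k(1)$ incarnations to sum over and $c^M$ is just another $k$-dependent constant; no geometric summation over unbounded $N$ is ever performed. That reduction is the missing idea in your write-up, and without it (or some substitute) the bound does not go through for $q$ in the range $y^k < q \le (cy)^k$.
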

\begin{proof}
There are clearly $O_k(1)$ incarnations of Cases 1 and 2 and each yield at most $O(xy^k/q^2)$ such $n.$ By Lemma \ref{NumberOfCases} for each incarnation of Case 3,  we get at most
$$O\bigg(\frac{c^My^M}{q^N}\bigg) \ll \frac{c^My^k}{q^2}$$ such $n$ since $M\le k(N-1)$ and $q>y^k.$ It remains to show we only require $O_k(1)$ such incarnations. Suppose $n$ satisfies an incarnation with $M\le k(N-1)$. Then it also satisfies a minimal incarnation with $M\le k(N-1)$ since removing a string of $p_1 \in \p_{p_2},p_2\in \p_{p_3} \dots p_{k-1} \in \p_{p_k}$, would decrease $N$ by $1$ and $M$ by $k$ leaving the inequality unchanged. Secondly we can assume that $n$ also satisfies an incarnation where $k(N-2) < M \le k(N-1)$ since we can keep eliminating primes in the $\LL_{i,2},$ which decrease $N$ by $1$, but $M$ by at most $k.$ This must eventually produce an incarnation where $k(N-2) < M \le k(N-1)$ since if we eliminate all primes in the $\LL_{i,2}$ but $1,$ then $M>k(N-1).$ Also note that the condition $m_{h,1} \le m_{h-1,1}+m_{h-1,2}$ forces $M \le kN.$ If $M$ is bounded between $k(N-2)$ and $kN$ and the incarnation is minimal, we get that $N$ is bounded by $2k$ since eliminating a prime in $\LL_{i,2}$ can only shrink $M$ by at most $k-1$ since our incarnation is minimal.

Therefore $n$ satisifies an incarnation where $N$ and hence $M$ are bounded functions of k. Since there are only $O_k(1)$ such incarnations, we get our result, noting that $c^M$ can be absorbed into the constant as well.
\end{proof} 

\begin{proof}[Proof of Proposition \ref{prop2}]
Let $S=S(x) = \bigcup_{q>y^{k}} S_{q}$. Using Lemma \ref{Eliminate} we have 

$$\#S \le \sum_{q>y^{k}} \# S_{q} \ll  \sum_{q>y^{k}}\frac{xy^{k}}{q^{2}}
\ll xy^{k}\sum_{q>y^{k}}\frac{1}{q^{2}} \ll \frac{xy^{k}}{\log(y^{k})y^{k}} \ll \frac{x}{\psi(x)} 
$$ by \eqref{1_q2}. As for the $n$ with $n \notin S$ and $a = \nu_{q}(\phik) >1,$ the only remaining case is that $M > k(N-1)$. Recall that $a = N+H-1.$ If $H=1$, then $N=m_{1,2}=a,$ and so $m_{2,1}=a-1$ or $a$. Otherwise for $k \ge 2,$

$$M= \sum_{h}m_{h,1} \leq a+ (k-1)m_{2,1} \le a+(k-1)(a-2) = k(a-1) - k + 2 \le (k-1)N$$ leading to a contradiction. If $H>1$, then we again wish to show that $m_{2,1} \ge a-k.$
\begin{align*}M &= \sum_{h}(m_{h,1}+m_{h,2}) \\ & \le km_{1,2} + (k-1)\sum_{h>1}m_{h,2} \\ &= m_{1,2} + (k-1)N \\ &= k(N-1) -N + k +m_{1,2}\end{align*}
which implies $m_{1,2} > N-k$  and so $\sum_{h>1}m_{h,2} = N - m_{1,1} < k.$ Therefore if $m_{2,1} < a-k,$ then
\begin{align*}M &= \sum_{h}(m_{h,1}+m_{h,2}) \\ &\le m_{1,2}+(k-1)m_{2,1} + (k-1)\sum_{h>1}m_{h,2} \le  a+(k-1)(a-k-1) + (k-1)(k-1)
\\ &= ak -2k \\ &\le k(N-1)\end{align*} as $N>a$ again leading to a contradiction. Hence $m_{2,1} \ge a-k$ and so we can get

\begin{align*}\sum_{\substack{n \notin S \\ n \leq x}}\sum_{\substack{q>y^{k} \\ \nu_{q}(\phik)>1}}  (\nu_{q}(\phik)\log{q} & \le 2\sum_{\substack{n \notin S \\ n \leq x}}\sum_{\substack{q>y^{k} \\ \nu_{q}(\phik)>1}}  (\nu_{q}(\phik)-1)\log{q} \\ 
& \ll \sum_{\substack{q>y^{k}}}\log q\sum_{a \ge 2}a\sum_{\substack{n \leq x \\ n \notin S \\ \nu_{q}(\phik)=a }}1.
\end{align*}
Unfortunately, just blindly using the Brun-Titchmarsh inequality in \eqref{BT} won't be good enough as we must sum over all $a.$ Let $g(a,k)=(a-k)!$ if $a \ge k$ or $1$ otherwise and note that since we have $m_{1,2}\ge a-k$, we have at least $g(a,k)$ permutations of the same primes. Then by using Lemma \ref{NumberOfCases} we get

\begin{align*}a\sum_{\substack{q>y^{k}}}\log q\sum_{\substack{n \leq x \\ n \notin S \\ \nu_{q}(\phik)=a }}1 & \ll a\frac{x(cy)^{M}}{q^{N}g(a,k)} \ll \frac{ac^{k(a+k-1)}xy^{2k}}{q^2g(a,k)} 
\end{align*} using the assumption that $q>y^k$ and $M \le kN \le k(a+k-1).$ Hence we get our sum is 

\begin{align*}\sum_{\substack{n \notin S \\ n \leq x}}\sum_{\substack{q>y^{k} \\ \nu_{q}(\phik)>1}}  (\nu_{q}(\phik)\log{q} & \ll \sum_{\substack{q>y^{k}}}\log q\sum_{a \ge 2}\frac{ac^{k(a+k-1)}xy^{2k}}{q^2g(a,k)} \\
& = xy^{2k}\sum_{q>y^{k}}\frac{\log q}{q^{2}}\sum_{a \ge 2}\frac{ac^{k(a+k-1)}}{g(a,k)}
\end{align*} However the latter sum converges to some function depending on $k$, and so we get

$$ \ll xy^{2k}\sum_{q>y^{k}}\frac{\log q}{q^{2}} \ll xy^{k}$$ by \eqref{logq_q2}.
\end{proof}

\section{Small Primes Dividing $\lambda_k(n)$}\label{lambda}
We now turn our attention to the bound involving $\lambda_k(n)$ in the summand. Just like when we were dealing with the number of cases where $q^a \mid \phi_k(n)$, we will need a lemma to deal with the number of cases where $q^a \mid \lambda_k(n).$ Fortunately this case is much simpler as the only two ways for $q^a \mid \lambda(n)$ is for $q^{a+1} \mid n$ or for there to exist $p \mid n$ with $p \in \p_{q^a}.$ Note that these conditions aren't sufficient, but are necessary when $q=2.$

\begin{lemma}\label{NumberOfCases2} The number of positive integers $n \le x$ for which $q^{a}\mid \lamk$ is $O(\frac{xy^{k}}{q^{a}}).$
\end{lemma}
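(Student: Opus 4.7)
The plan is to proceed by induction on $k$, exploiting the two necessary conditions highlighted in the paragraph preceding the lemma: if $q^{a}\mid \lambda(m)$, then either $q^{a+1}\mid m$ or there exists a prime $p\in\p_{q^{a}}$ with $p\mid m$. Applied with $m=\lambda_{k-1}(n)$, these two alternatives propagate the divisibility from $\lambda_k(n)$ down to $\lambda_{k-1}(n)$, so the inductive hypothesis at level $k-1$ applied to suitable prime powers will bound each alternative.

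For the base case $k=1$, the first alternative contributes at most $x/q^{a+1}\ll xy/q^{a}$ integers, while the second, summed over $p\in\p_{q^{a}}$ with $p\le x$, contributes at most
$$\sum_{\substack{p\in\p_{q^{a}} \\ p\le x}} \frac{x}{p} \ll \frac{xy}{q^{a}}$$
by the Brun--Titchmarsh bound \eqref{BT}, which applies because $q^{a}/\phi(q^{a})$ is bounded. For the inductive step, I assume the lemma at level $k-1$ for every prime power. The first alternative $q^{a+1}\mid \lambda_{k-1}(n)$ is handled by invoking the hypothesis at the prime power $q^{a+1}$, yielding $\ll xy^{k-1}/q^{a+1}\ll xy^{k}/q^{a}$. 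For the second alternative, I apply the hypothesis at the prime power $p^{1}$: the number of $n\le x$ with $p\mid\lambda_{k-1}(n)$ is $\ll xy^{k-1}/p$. Summing over $p\in\p_{q^{a}}$ with $p\le x$ (an automatic bound since any prime divisor of $\lambda_{k-1}(n)$ is at most $n\le x$) and applying \eqref{BT} once more gives
$$\sum_{\substack{p\in\p_{q^{a}} \\ p\le x}} \frac{xy^{k-1}}{p} \ll \frac{xy^{k}}{q^{a}},$$
closing the induction.

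The only real subtlety, and the step I would double-check most carefully, is the case $q=2$: because $\lambda(2^{j})=2^{j-2}$ for $j\ge 3$, the two divisibility conditions on $m$ fail to be sufficient for $q^{a}\mid\lambda(m)$. The paper explicitly flags this in the sentence introducing the lemma and asserts that the conditions are nonetheless \emph{necessary}, which is all the induction requires, since the argument only needs an upper envelope on the set of $n$ in question. Consequently no additional case split for $q=2$ is required, and the recursion above delivers the stated bound uniformly in $q$ and $a$.
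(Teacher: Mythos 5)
Your proof is correct and essentially identical to the paper's: the same induction on $k$, the same two-alternative dichotomy ($q^{a+1}\mid\lambda_{k-1}(n)$ versus $p\in\p_{q^a}$ with $p\mid\lambda_{k-1}(n)$), and the same use of Brun--Titchmarsh \eqref{BT} to close each step. Your explicit remark that only the necessity of the two conditions matters, so that $q=2$ requires no separate treatment, is a point the paper makes in the prose before the lemma; making the inductive hypothesis "for every prime power" explicit is also a small, welcome clarification of what the paper uses implicitly.
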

\begin{proof}
We'll proceed by induction on $k$. If $k=1$, then $q^{a}\mid \lambda(n)$ if $q^{a+1}\mid n$ or $p \in \p_{q^{a}}$ with $p\mid n$. The number of such $n$ is at most

$$\sum_{\substack{n \le x\\ q^{a+1} \mid n }}1 +\sum_{\substack{n \le x\\ p \in \p_{q^{a}} \\ p \mid n }}1 \ll 
\frac{x}{q^{a+1}} +\sum_{p \in \p_{q^{a}}}\frac{x}{p} \ll \frac{x}{q^{a+1}}+\frac{xy}{q^{a}} \ll \frac{xy}{q^{a}}.$$ using \eqref{BT}.
Suppose the number of $n \le x$ for which $q^{a}\mid \lambda_{k-1}(n)$ is $O(\frac{xy^{k-1}}{q^{a}})$. If $q^{a}\mid \lamk$, then either $q^{a+1}\mid \lambda_{k-1}(n)$ or $p \in \p_{q^{a}}$ with $p \mid \lambda_{k-1}(n)$. Hence the number of such $n$ is bounded by

$$\sum_{\substack{n \le x\\ q^{a+1}\mid \lambda_{k-1}(n) }}1 +\sum_{\substack{n \le x\\ p \in \p_{q^{a}} \\ p|\lambda_{k-1}(n) }}1 \ll 
\frac{xy^{k-1}}{q^{a+1}} +\sum_{p \in \p_{q^{a}}}\frac{xy^{k-1}}{p} \ll \frac{xy^{k-1}}{q^{a+1}}+\frac{xy^{k}}{q^{a}} \ll \frac{xy^{k}}{q^{a}}$$ as needed.
\end{proof}

\begin{proof}[Proof of Proposition \ref{prop3}]
Like in the proof of previous propositions, we'll show $$\sum_{n \le x} \sum_{q\leq y^{k}} \nu_{q}(\lamk) \log{q} \ll xy^{k}.$$ The left hand side is equal to
\begin{align*}\sum_{n \le x} \sum_{q\leq y^{k}} \nu_{q}(\lamk) \log{q} &=\sum_{n \le x} \sum_{q\leq y^{k}} \log q \sum_{\substack{a \in \nat \\ q^{a}\mid \lamk}}1 \\
& \le \sum_{n \le x} \sum_{q\leq y^{k}} \log q \sum_{\substack{a \in \nat \\ q^{a} \le y^{k}}}1 + \sum_{n \le x} \sum_{q\leq y^{k}} \log q \sum_{\substack{a \in \nat \\ q^{a}\mid \lamk \\ q^{a} > y^{k}}}1.\end{align*}
The first sum is $$\sum_{n \le x} \sum_{q\leq y^{k}} \log q \sum_{\substack{a \in \nat \\ q^{a} \le y^{k}}}1 = \sum_{n \le x} \sum_{m\leq y^{k}} \Lambda(m) \ll \sum\limits_{n \le x} y^{k} \ll xy^{k},$$
and by Lemma \ref{NumberOfCases2} and using the geometric estimate in \eqref{geometric} the second sum becomes 

$$\sum_{n \le x} \sum_{q\leq y^{k}} \log q \sum_{\substack{a \in \nat \\ q^{a}\mid \lamk \\ q^{a} > y^{k}}}1 \ll \sum_{q\leq y^{k}} \log q \sum_{\substack{a \in \nat \\ q^{a} > y^{k}}}\frac{xy^{k}}{q^{a}} \ll \sum_{q\leq y^{k}} \log q \frac{xy^{k}}{y^{k}} \ll xy^{k}.$$
\end{proof}

\section{Reduction To $h_k(n)$ For Small Primes}\label{reduction}
The small primes dividing $\phi_k(n)$ are what contributes to the asymptotic term of $\log(n/\lambda_k(n))$. In this section we show that the important case is the supersquarefree case of $p$ dividing $\phi_{k}(n)$ which is when $$p \in \p_{p_1}, p_1 \in \p_{p_2} \dots p_{k-1} \in \p_{p_k}, p_k \mid n.$$ For this reason we will approximate the sum $\sum_{q \le y^k}v_q(\phi_k(n))\log q$ with \begin{equation}\label{hkDef}\hk(n) = \sum_{p_{1}\mid n} \sum_{p_{2}\mid p_{1}-1} \dots \sum_{p_{k} \mid p_{k-1}-1} \sum_{q \leq y^{k}} \nu_{q}(p_{k}-1)\log q.\end{equation}
\begin{proof}[Proof of Proposition \ref{prop4}]
For any fixed prime $q$, we know that $$v_{q}(\phi(m)) = \max \{ 0,v_{q}(m)-1 \} + \sum_{p\mid m}v_{q}(p-1),$$
which implies
 $$\sum_{p \mid m}v_{q}(p-1) \le v_{q}(\phi(m)) \le v_{q}(m) + \sum_{p\mid m}v_{q}(p-1).$$
Repeated use of this inequality for $m=\phi_{l}(n)$ where $l$ ranges from $k-1$ to $0$ yields

\begin{equation}\label{howdy}\begin{split}\sum_{p\mid \phi_{k-1}(n)}v_{q}(p-1) & \le v_{q}(\phik) \\ & \le \sum_{p\mid \phi_{k-1}(n)}v_{q}(p-1) + \sum_{p\mid \phi_{k-2}(n)}v_{q}(p-1) \\ & \qquad\qquad + \dots + \sum_{p\mid \phi(n)}v_{q}(p-1)+v_{q}(n).\end{split}
\end{equation}
A prime $p$ divides $\phi_{k-1}(n)$ either in the supersquarefree case (ssf), or not in the supersquarefree case (nssf), yielding

\begin{align*}\sum_{ssf}v_{q}(p-1) &\le \sum_{p\mid \phi_{k-1}(n)}v_{q}(p-1) \\
& \le \sum_{ssf}v_{q}(p-1) + \sum_{nssf}v_{q}(p-1).\end{align*}
Combining this inequality with \eqref{howdy} yields 
\begin{align*}\sum_{ssf} v_{q}&(p-1) \le v_{q}(\phik) \\ &\le \sum_{ssf}v_{q}(p-1) + \sum_{nssf}v_{q}(p-1) + \sum_{p\mid \phi_{k-2}(n)}v_{q}(p-1) + \dots + \sum_{p\mid \phi(n)}v_{q}(p-1)+v_{q}(n).\end{align*}
Subtracting the sum over the supersquarefree case, multiplying through by $\log q$ and summing over $q \le y^{k}$ we get

\begin{align*}0 & \le \sum\limits_{q \leq y^{k}} \nu_{q}(\phik)\log{q} - \hk(n) \\
& \le \sum_{q \le y^{k}}\sum_{nssf}v_{q}(p-1)\log q + \sum_{q \le y^{k}}\sum_{p|\phi_{k-2}(n)}v_{q}(p-1)\log q+ \dots +\sum_{q \le y^{k}}\sum_{p\mid n}v_{q}(p-1)\log q \end{align*} where we get $h_k(n)$ from \eqref{hkDef}.
Hence it suffices to show that the sum on the right side becomes our error term. For the sum

\begin{align*}\sum_{n \le x}\sum_{q \le y^{k}}\sum_{p \mid \phi_{m}(n)}v_{q}(p-1)\log q & = \sum_{n \le x}\sum_{q \le y^{k}}\sum_{p\mid \phi_{m}(n)}\sum_{\substack{a \in \nat \\ q^{a} \mid p-1}}\log q \\ 
&= \sum_{n \le x}\sum_{q \le y^{k}}\log q \sum_{a \in \nat}\sum_{\substack{p \in \p_{q^{a}} \\ p \mid \phi_m(n)}}1, \end{align*}
we'll split the sum over values of $p \le y^{k-1}$ and $p > y^{k-1}.$ For $p \le y^{k-1}$ we uniformly get for all $n$ that

\begin{align*}\sum_{q \le y^{k}}\log q \sum_{a \in \nat}\sum_{\substack{p \in \p_{q^{a}} \\ p \le y^{k-1} \\ p \mid \phi_m(n)}}1 & \le \sum_{q \le y^{k}}\log q \sum_{a \in \nat}\pi(y^{k-1};q^{a},1)\\
& \ll \sum_{q \le y^{k}}\log q \sum_{a \in \nat}\frac{y^{k-1}}{\phi(q^{a})} \\
& \ll y^{k-1}\sum_{q \le y^{k}}\frac{\log q}{q}\\
& \ll y^{k-1}\log y 
\end{align*}  using the geometric estimate \eqref{geometric} and the prime number theorem for arithmetic progressions. As for $p > y^{k-1}$ we fix an $M$ and $N$ from case 3 for which $ p\mid \phi_{m}(n)$, of which there are at most $O_k(1)$ such $M,N$ since $v_p(\phi(m))=1$. Therefore

\begin{align*}\sum_{n \le x}\sum_{q \le y^{k}}\log q \sum_{a \in \nat}\sum_{\substack{p>y^{k-1} \\ p \in \p_{q^{a}} \\ p \mid \phi_m(n)}}1 & \ll \sum_{q \le y^{k}}\log q \sum_{a \in \nat}\sum_{\substack{p \in \p_{q^{a}}\\p>y^{k-1}}}\frac{xy^{M}}{p^{N}} \\
& \le \sum_{q \le y^{k}}\log q \sum_{a \in \nat}\sum_{p \in \p_{q^{a}}}\frac{xy^{M-(k-1)(N-1)}}{p} \\
& \ll \sum_{q \le y^{k}}\log q \sum_{a \in \nat}\frac{xy^{M-(k-1)(N-1)+1}}{q^{a}} \\
& \ll \sum_{q \le y^{k}}\frac{xy^{M-(k-1)(N-1)+1}\log q}{q} \\
& \ll xy^{M-(k-1)(N-1)+1}\log y^{k} \\
& \ll xy^{M-(k-1)(N-1)+1}\log y. \end{align*}
Since the $M,N$ were chosen for $\phi_{m}(n)$ we know that $M \le mN$ where equality holds if and only if we are in the supersquarefree case. Now either $m \le k-2$  or $m=k-1$ and we are not in the supersquarefreecase. In the former case we have an error of 

$$O(xy^{(k-2)N-(k-1)(N-1)+1}\log y)=O(xy^{k-N}\log y) = O(xy^{k-1}\log y)$$ since $N \ge 1$, or in the latter case

$$O(xy^{(k-1)N-1-(k-1)(N-1)+1}\log y)=O(xy^{k-1}\log y).$$ Thus we get 

\begin{align*}\sum_{n \le x}\bigg(\sum_{q \le y^{k}}\sum_{nssf}v_{q}(p-1)\log q & + \sum_{q \le y^{k}}\sum_{p|\phi_{k-2}(n)}v_{q}(p-1)\log q+ \dots \\ & +\sum_{q \le y^{k}}\sum_{p|n}v_{q}(p-1)\log q\bigg) \ll xy^{k-1}\log y\end{align*} and so

\begin{align*}\sum_{q \le y^{k}}\sum_{nssf}v_{q}(p-1)\log q & + \sum_{q \le y^{k}}\sum_{p|\phi_{k-2}(n)}v_{q}(p-1)\log q+ \dots \\ & +\sum_{q \le y^{k}}\sum_{p|n}v_{q}(p-1)\log q \ll y^{k-1}\log y \cdot \psi(x)\end{align*} as required.
\end{proof}

\section{Reduction to the First and Second Moments}\label{Turan}
The Tur\'{a}n-Kubilius inequality \cite[Lemma 3.1]{K} asserts that if $f(n)$ is a complex additive function, then there exists an absolute constant $C$ such that
\begin{equation}\label{TK}\sum_{n\le x}\abs{f(n)-M_1(x)}^2 \le CxM_2(x)\end{equation} where $M_1(x)=\sum_{p \leq x}\abs{f(p)}/p$ and $M_2(x)=\sum_{p \leq x}\abs{f(p)}^2/p.$ Since $h_k(n)$ is additive we can apply this inequality where $M_{1}(x)= \sum_{p \leq x}\hk (p)/p$, $M_{2}(x)= \sum_{p \leq x}\hk (p)^{2}/p.$ We will need to find bounds on $M_1$ and $M_2$ therefore it's our goal to prove the following two propositions:

\begin{proposition}\label{M1} For all $x>e^{e^{e}},$ $$M_{1}(x) =  \frac{1}{(k-1)! } y^{k} \log y + O(y^{k})$$ \end{proposition}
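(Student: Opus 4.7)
The plan is to expand $M_1(x) = \sum_{p \le x} h_k(p)/p$ using the definition of $h_k$ and evaluate the resulting nested multiple sum by swapping summation order and iterating the asymptotic \eqref{BT3}.

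For a prime $p$, the value $h_k(p)$ collapses to the inner chain sum, since the outermost sum $\sum_{p_1 \mid p}$ contains only the term $p_1 = p$. I would invert the order of summation in $M_1(x)$ to put the variables $q \le y^k$ and $a \ge 1$ outermost (expanding $\nu_q(p_k-1) = \#\{a\ge 1 : q^a \mid p_k-1\}$), followed by the chain variables $p_k, p_{k-1}, \dots, p_2$, and finally the outer prime $p$:
\begin{equation*}
M_1(x) = \sum_{q \le y^k} \log q \sum_{a \ge 1} \sum_{p_k \in \p_{q^a}} \sum_{p_{k-1} \in \p_{p_k}} \dots \sum_{p_2 \in \p_{p_3}} \sum_{\substack{p \in \p_{p_2} \\ p \le x}} \frac{1}{p}.
\end{equation*}
I would then evaluate the innermost sum over $p$ via \eqref{BT3}, which gives $\sum_{p \in \p_{p_2},\, p \le x} 1/p = y/\phi(p_2) + O(\log p_2 / \phi(p_2))$ (noting that $\sum 1/p - 1/(p-1)$ is absorbed into the error). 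This peels off a factor $y$; iterating the same estimate for each of $p_2, p_3, \dots, p_k$ peels off another factor of $y$ at each step, together with an error. After $k$ iterations the expression reduces to a sum over $q$ and $a$ of essentially $y^k / \phi(q^a)$ (up to a combinatorial constant and lower-order terms), and the $a$-sum is handled by the geometric estimate \eqref{geometric}, giving $\sum_{a \ge 1} 1/\phi(q^a) = q/(q-1)^2$. The final $q$-sum is evaluated by Mertens' formula \eqref{Merten} applied to $\sum_{q \le y^k} \log q \cdot q/(q-1)^2$, producing the claimed main term $\tfrac{1}{(k-1)!} y^k \log y$.

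The main obstacle will be controlling the error terms through the iteration. Each use of \eqref{BT3} introduces an error of size $O(\log n / \phi(n))$, and when summed over the next chain variable this generates contributions of order $\log x$ rather than $\log\log x = y$; done naively, the accumulated error would grow much faster than the target bound $O(y^k)$. To overcome this I plan to track errors separately at each level and use Chebyshev-type bounds such as \eqref{Cheb} together with the fact that the contributing primes satisfy $p_k > q^a$, so that error contributions remain smaller than the surviving main terms after summation. A secondary point, best handled by induction on $k$ (with the $k = 1$ case being a single application of \eqref{BT3} and \eqref{Merten}), is to verify that the constant in the main term is exactly $\tfrac{1}{(k-1)!}$; this factor should emerge naturally from the iterated structure once the leading terms of \eqref{BT3} are combined correctly at each step, and the induction keeps the bookkeeping clean.
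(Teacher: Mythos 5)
Iterating \eqref{BT3} with the fixed cutoff $x$ at every level does not produce the constant $\frac{1}{(k-1)!}$: it produces $k\,y^k\log y$, which overshoots by a factor of $k!$. The test case $k=2$ makes this concrete. Applying \eqref{BT3} to the innermost sum gives $\sum_{p\le x,\,p\in\p_{p_2}}1/p\approx y/\phi(p_2)$; applying it again to $\sum_{p_2\le x,\,p_2\in\p_{q^a}}y/\phi(p_2)$ gives $y^2/\phi(q^a)$; summing over $a$ via \eqref{geometric} and then over $q\le y^2$ via \eqref{Merten} gives $2\,y^2\log y$ — but the correct main term is $y^2\log y$. The missing factor of $\tfrac12$ hides inside the error $O(\log p_2/\phi(p_2))$ in \eqref{BT3}: the inner sum is actually $\sim(\log\log x-\log\log p_2)/\phi(p_2)$, and although the secondary piece $\log\log p_2/\phi(p_2)$ is absorbed by that error for any one $p_2$, summing it over $p_2\in\p_{q^a}$, $p_2\le x$ produces $\sim y^2/(2\phi(q^a))$ by Euler summation — a main-term contribution, not an error-term one. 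More generally the $\frac{1}{(k-1)!}$ arises from $k-1$ nested integrations of the form $\int(\log\log u)^{j}/(u\log u)\,du = (\log\log u)^{j+1}/(j+1)$, and \eqref{BT3}, which returns $y/\phi(n)$ independently of where $n$ sits below the cutoff, is simply too coarse to see this. No amount of bookkeeping with \eqref{BT3} alone will recover the factorial, because the information is not in the main term of that estimate.

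Your secondary worry about error accumulation is also fatal on its own. Summing $O(\log p_i/\phi(p_i))$ over $p_i\le x$ in an arithmetic progression modulo $p_{i+1}$ produces $O(\log x/\phi(p_{i+1}))$ by Brun--Titchmarsh and partial summation, and this swamps $y^k$; neither \eqref{Cheb} nor the observation $p_k>q^a$ removes the $\log x$. The paper's route avoids both obstructions at once. It estimates $\sum_{p\le t}h_k(p)$ rather than $M_1$ directly, restricts to the truncated ranges $p_i\le t^{1/3^{i-1}}$ (Lemma~\ref{hk1}) so that Bombieri--Vinogradov applies, controls the discarded tails with Brun's sieve (Lemma~\ref{Sums}), replaces $\pi(t;p_2,1)$ by $\li(t)/p_2$ (Lemma~\ref{BVreduction}), and converts each sum over a chain variable into an integral by Euler summation, yielding the recursion $g_{k,l}(v)=\li(v)\int_2^{v^{1/3}}g_{k,l-1}(u)\,u^{-2}\,du+O(\cdots)$ (Lemma~\ref{Recursion}). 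That recursion is precisely what manufactures the factor $1/(l-1)$ at each step; only at the very end does partial summation recover $M_1(x)$, whence the $1/k$ from $\int (\log\log t)^{k-1}/(t\log t)\,dt$ cancels the $k$ coming from Mertens applied to $\sum_{q\le y^k}\log q/q$.
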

\begin{proposition}\label{M2} For all $x>e^{e^{e}},$ $$M_{2}(x) \ll  y^{2k-1} \log^{k-1} y.$$  \end{proposition}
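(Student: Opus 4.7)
The plan is to expand the square $h_k(p)^2$ as a double sum over two ordered prime chains $\vec p=(p_2,\ldots,p_k)$ and $\vec p'=(p_2',\ldots,p_k')$ emanating from $p$, together with two prime-power pairs $(q,a),(q',a')$ obeying $q,q'\le y^k$, $q^a\mid p_k-1$, $(q')^{a'}\mid p_k'-1$, and weighted by $\log q\log q'$. After swapping the order of summation so that the sum over $p$ is innermost, Brun--Titchmarsh \eqref{BT2} bounds that innermost sum by $\ll y/\phi([p_2,p_2'])$, since $p_2\mid p-1$ and $p_2'\mid p-1$ together force $[p_2,p_2']\mid p-1$.

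The key step is a case split on the \emph{merge depth} $j$, defined as the largest integer with $p_i=p_i'$ for every $2\le i\le j$ (with $j=1$ when the chains diverge immediately and $j=k$ when they agree on every prime). For each $j$ the sum decomposes into a shared prefix of $j-1$ primes, two diverged suffixes of $k-j$ primes each, and two smooth-log tails. Applying \eqref{BT3} iteratively to each prime in the chains, working from the outside in, contributes one factor of $y$ per summation, totalling $y^{j-1}$ from the prefix and $y^{k-j}$ from each suffix; combined with the $y$ from the sum over $p$, the total chain contribution is $y^{2k-j}$. The transitional summation over $p_j$ carries the modulus $(p_{j+1}-1)(p_{j+1}'-1)$ when $p_{j+1}\ne p_{j+1}'$, which Brun--Titchmarsh still handles cleanly.

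The two smooth-log sums over $(q,a)$ and $(q',a')$ are each bounded by
\[
\sum_{q\le y^k}\log q\sum_{a\ge 1}\frac{1}{\phi(q^a)}=\sum_{q\le y^k}\frac{q\log q}{(q-1)^2}\ll \log y
\]
via Mertens' formula \eqref{Merten}. The dominant contribution comes from $j=1$ (fully independent chains), which produces a bound of order $y^{2k-1}$ times a polylog factor; summing over $j\in\{1,\ldots,k\}$ the geometric decay in $y$ leaves $j=1$ as the main term, and after gathering the combinatorial constants one arrives at $M_2(x)\ll y^{2k-1}(\log y)^{k-1}$ as claimed.

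The main obstacle will be the combinatorial bookkeeping of the merge patterns, and in particular bounding the smooth-log contributions tightly enough to match the claimed exponent $\log^{k-1}y$ rather than a larger power. Some care will also be required to control the accumulation of the $O(\log n/\phi(n))$ error terms from \eqref{BT3} across the many iterated applications of Brun--Titchmarsh; since every relevant modulus is bounded by a small power of $x$, each individual error is manageable, but verifying that these aggregate into a contribution strictly smaller than the main term is where the bulk of the technical work will lie.
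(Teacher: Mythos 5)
Your approach is correct, and it is genuinely different from --- and lighter than --- the one in the paper. You estimate $M_2(x)=\sum_{p\le x}h_k(p)^2/p$ directly, keeping the weight $1/p$ on the innermost sum so that every level of the two chains is handled by a plain Brun--Titchmarsh bound of the shape \eqref{BT2}, which gives $\ll y/\phi(n)$ uniformly in the modulus $n$; organizing the pair of chains by merge depth $j$ then yields $y^{2k-j}$ per case with $j=1$ dominating, and the two prime-power tails contribute $O(\log^2 y)$. The paper instead bounds the unweighted count $\sum_{p\le t}h_k(p)^2$ and recovers $M_2$ by partial summation, and for that route one must save an extra factor $1/\log t$; this is precisely why the paper invokes Brun's sieve (Lemma \ref{Doubles}, resting on the multiplicative-function estimates of Lemmas \ref{PhiSumLem}--\ref{Sieve3}) and isolates the diagonal $p_2=r_2$ in Lemma \ref{DoubleSum}. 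Your weighted version makes that machinery unnecessary for the second moment. Two points to repair in the execution. First, in the chain sums use the clean upper bounds \eqref{BT2}/\eqref{BT}, not the asymptotic \eqref{BT3}: the error term $O(\log n/\phi(n))$ in \eqref{BT3} is of size $\log x/\phi(p_3)$ when the modulus is a power of $x$ and would swamp the main term $y/\phi(p_3)$, whereas for an upper bound this issue never arises --- so the ``accumulation of errors'' you flag at the end is a non-problem once \eqref{BT3} is dropped. Second, when $j=k$ the two tails are not independent: both $q^{a}$ and $(q')^{a'}$ divide $p_k-1$, so the sum over $p_k$ runs over $\p_{[q^a,(q')^{a'}]}$ and the $(q,q')$ sum must be bounded with that joint modulus (split on whether $q=q'$); it still gives $O(\log^2 y)$. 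Finally, note that what your computation actually produces is $M_2(x)\ll y^{2k-1}\log^2 y$, which is exactly what the paper's own proof delivers as well; this is stronger than the stated bound for $k\ge 3$ and in every case suffices for the application in Proposition \ref{hkestimate}.
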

These will lead to a proof of Proposition \ref{hkestimate}.
\begin{proof}[Proof of Proposition \ref{hkestimate}]
Let $N$ denote the number of $n \leq x$ for which  $\abs{\hk(n) - M_{1}(x)} > y^{k}.$ The contribution of such $n$ to the sum in \eqref{TK} is at least $Ny^{2k}.$ Thus Propostion \ref{M2} implies $N \ll x \log^{k-1}y /y$ and so Proposition \ref{M1} implies that $\hk(n) = \frac{1}{(k-1)!} y^{k} \log y + O(y^{k})$ except for a set of size $O(x (\log y)^{k-1} /y).$
\end{proof}

\section{Lots of Summations}
In our proofs of Propositions \ref{M1} and \ref{M2} we will see that $M_1(x)$ and $M_2(x)$ will reduce to summations involving $\pi(x;p,1).$ We will be using some sieve techniques to bound these sums and those will require some bounds on sums on multiplicative functions involving $\phi(m).$  This section will involve the estimation of the latter sums.

\begin{lemma}\label{PhiSumLem}
For any non-negative integer L we have
\begin{equation} \label{PhiSum}
\sum_{m \le t} \frac{m^{L}}{\phi(m)^{L+1}} \ll_{L} \log t.
\end{equation}
\end{lemma}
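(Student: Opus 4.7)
The plan is to exploit the multiplicative structure. I would first rewrite
$$\frac{m^{L}}{\phi(m)^{L+1}} = \frac{1}{m}\Bigl(\frac{m}{\phi(m)}\Bigr)^{L+1} = \frac{g(m)}{m},$$
where $g(m) := (m/\phi(m))^{L+1}$ is nonnegative and multiplicative. The crucial observation is that $g(p^k) = (p/(p-1))^{L+1}$ is independent of $k \ge 1$, so the Dirichlet series of $g(m)/m$ has a very tidy Euler factor.

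Next I would apply the standard upper bound for nonnegative multiplicative functions (valid because the right side expands into $\sum f(m)$ over $m$ with all prime factors $\le t$, which contains every $m \le t$): with $f(m) = g(m)/m$,
$$\sum_{m \le t}\frac{g(m)}{m} \le \prod_{p \le t}\Bigl(1 + \sum_{k \ge 1}\frac{g(p^k)}{p^k}\Bigr) = \prod_{p \le t}\Bigl(1 + \frac{p^{L+1}}{(p-1)^{L+2}}\Bigr),$$
using $\sum_{k \ge 1} p^{-k} = 1/(p-1)$ to collapse the inner sum.

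It then remains to show the product is $\ll_L \log t$. I would Taylor-expand
$$\frac{p^{L+1}}{(p-1)^{L+2}} = \frac{1}{p}\Bigl(1 + \frac{1}{p-1}\Bigr)^{L+2} = \frac{1}{p} + O_L\Bigl(\frac{1}{p^2}\Bigr),$$
so taking logs and using $\log(1+x) \le x$ yields
$$\log \prod_{p \le t}\Bigl(1 + \frac{p^{L+1}}{(p-1)^{L+2}}\Bigr) \le \sum_{p \le t}\frac{1}{p} + O_L(1) = \log\log t + O_L(1)$$
by the classical Mertens estimate $\sum_{p \le t}1/p = \log\log t + O(1)$. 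Exponentiating gives the claimed bound.

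The only genuine obstacle is cosmetic: at very small primes, especially $p=2$ where $(p/(p-1))^{L+1} = 2^{L+1}$, the expansion $1/p + O_L(1/p^2)$ degenerates. However each such prime contributes only a bounded (albeit $L$-dependent) factor to the Euler product, which is harmlessly absorbed into the implicit constant. Everything else is a routine Mertens-type computation.
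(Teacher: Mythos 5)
Your proof is correct and follows essentially the same route as the paper: bound the sum by the standard Euler-product majorant for a nonnegative multiplicative function, simplify each Euler factor to $1 + p^{L+1}/(p-1)^{L+2}$, and close with Mertens. The rewriting $m^L/\phi(m)^{L+1} = g(m)/m$ is only a cosmetic repackaging of the same computation, and your use of $\log(1+x)\le x$ in place of the paper's asymptotic expansion is a minor (if anything slightly cleaner) variant.
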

\begin{proof}
If $f(n)$ is a non-negative multiplicative function, we know that
\begin{equation} \label{Multi}
\sum_{n \le t}f(n) \le \prod_{p \le t}\sum_{r=0}^\infty f(p^{r}).
\end{equation}
Applying \eqref{Multi} with $\frac{m^{L}}{\phi(m)^{L+1}}$ yields
\begin{align*}\sum_{m \le t} \frac{m^{L}}{\phi(m)^{L+1}} & \le  \prod_{p \le t}\bigg(1+\sum_{r=1}^\infty\frac{p^{rL}}{( p^{r}-p^{r-1})^{L+1}} \bigg) \\
& = \prod_{p \le t}\bigg(1+\sum_{r=1}^\infty\frac{p^{L-r+1}}{(p-1)^{L+1}} \bigg) \\
& = \prod_{p \le t}\bigg(1+\frac{1}{(p-1)^{L+1}}\frac{p^{L}}{1-\frac{1}{p}} \bigg) \\
& = \prod_{p \le t}\bigg(1+\frac{p^{L+1}}{(p-1)^{L+2}} \bigg) \\
& \le \exp\bigg( \sum_{p \le t}\log \bigg(1+\frac{p^{L+1}}{(p-1)^{L+2}} \bigg) \bigg)\\
& = \exp\bigg( \sum_{p \le t}\bigg( \frac{p^{L+1}}{(p-1)^{L+2}} + O_L\bigg(\frac{1}{p^{2}} \bigg) \bigg) \bigg)\\
& = \exp\bigg( \sum_{p \le t}\bigg(\frac{1}{p} + O_L\bigg(\frac{1}{p^{2}} \bigg) \bigg) \bigg)\\
& \ll_{L} \log t
\end{align*}
using \eqref{Merten}.
\end{proof}

\begin{lemma} \label{Sieve1}
Given a positive integer $C\le t^{\gamma}$ and non-negative integer $L$ we have
\begin{equation} \label{SieveSum1}
\sum_{m \le t} \frac{(Cm+1)^{L}}{\phi(Cm+1)^{L}\phi(m)} \ll_{L,\gamma} \log t.
\end{equation}
\end{lemma}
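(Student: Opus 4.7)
The plan is to rewrite the summand as $G(m)/m$ with
\[
G(m) := \frac{m}{\phi(m)}\left(\frac{Cm+1}{\phi(Cm+1)}\right)^{L},
\]
and reduce the lemma, via Abel summation, to bounding the partial sum $F(M):=\sum_{m\le M}G(m)$: if one establishes $F(M)\ll_{L,\gamma} M+(\log M)^{L+1}$, then
\[
\sum_{m\le t}\frac{G(m)}{m} = \frac{F(t)}{t}+\sum_{m<t}\frac{F(m)}{m(m+1)} \ll 1 + \sum_{m<t}\frac{1}{m+1}+\sum_{m\ge 1}\frac{(\log m)^{L+1}}{m^2} \ll\log t,
\]
the polylogarithmic tail being a convergent series.

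To bound $F(M)$, I would expand both multiplicative factors of $G$ as Dirichlet convolutions over squarefree divisors,
\[
\frac{m}{\phi(m)}=\sum_{\substack{d\mid m\\ \mu^2(d)=1}}\frac{1}{\phi(d)},\qquad \left(\frac{Cm+1}{\phi(Cm+1)}\right)^{L}=\sum_{\substack{e\mid Cm+1\\ \mu^2(e)=1}}h_L(e),
\]
where $h_L$ is the multiplicative function defined by $h_L(p)=(p/(p-1))^L-1$. Swapping the order of summation and bounding the inner count $\#\{m\le M:d\mid m,\,e\mid Cm+1\}\le M/(de)+1$ (which vanishes unless $\gcd(d,e)=\gcd(e,C)=1$) yields
\[
F(M)\le M\sum_{d,e}\frac{h_L(e)}{de\,\phi(d)} + \sum_{d\le M}\frac{1}{\phi(d)}\sum_{e\le CM+1}h_L(e).
\]
The first term is $O_L(M)$ because both $\sum_d 1/(d\phi(d))=\prod_p(1+1/(p(p-1)))$ and $\sum_e h_L(e)/e=\prod_p(1+h_L(p)/p)$ are convergent Euler products (for the latter, use $h_L(p)\ll_L 1/(p-1)$, so that $h_L(p)/p=O_L(1/p^2)$). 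The second term is controlled by the classical $\sum_{d\le M}1/\phi(d)\ll\log M$ together with the Rankin-type bound $\sum_{e\le x}h_L(e)\le\prod_{p\le x}(p/(p-1))^L\ll_L(\log x)^L$; once $M\ge C^{1/\gamma}$ we have $CM+1\le M^{\gamma+1}$ and this contributes $O_{L,\gamma}((\log M)^{L+1})$, as required.

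The main obstacle is the short head $M< C^{1/\gamma}$, in which $\log(CM+1)$ is governed by $\log C\le \gamma\log t$ rather than by $\log M$; the naive Rankin estimate here inflates the error by a factor of $(\log t)^L$. One handles this head by bounding $\sum_{m<C^{1/\gamma}}G(m)/m$ using the Euler-product ``main term'' in the expansion above (which gives the bound $\sum_{m\le M}G(m)\ll_L M$ independently of $C$) in place of the worst-case pointwise estimate $G(m)\ll(\log\log t)^{L+1}$, and absorbs the residual losses into the $L,\gamma$-dependent implicit constant. I expect this careful treatment of the small-$M$ regime to be the most delicate step.
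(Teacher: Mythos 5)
Your overall plan---Abel summation plus the squarefree-divisor expansions of $m/\phi(m)$ and of $\bigl((Cm+1)/\phi(Cm+1)\bigr)^{L}$---is sound, and everything up to the bound $F(M)\ll_{L} M+(1+\log M)\bigl(\log(CM+1)\bigr)^{L}$ is correct. The gap is exactly where you suspected it, and the specific claim you invoke to close it is false: $F(M)=\sum_{m\le M}G(m)\ll_{L}M$ does \emph{not} hold uniformly in $C$. Already $F(1)=\bigl((C+1)/\phi(C+1)\bigr)^{L}$, which is $\asymp(\log\log C)^{L}$ when $C+1$ is a primorial. More importantly, the Euler-product ``main term'' controls only the $M/(de)$ part of the lattice-point count; the offending $(\log CM)^{L}$ comes entirely from the $+1$ boundary term, which is a genuine contribution ($\sum_{e\le CM+1}\mu^2(e)h_L(e)$ really is of size $(\log CM)^L$) and cannot be discharged by appeal to the main term. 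As written, your argument yields
$$\sum_{m\le t}\frac{G(m)}{m}\ll_{L,\gamma}\log t+(\log C)^{L},$$
which proves the lemma for $L\le 1$, or for $C\le\exp\bigl((\log t)^{1/L}\bigr)$; but in the paper's applications (Lemma \ref{Sums}) the moduli $C$ are genuine powers of $t$, so for $L\ge 2$ you are short by a factor of $(\log t)^{L-1}$.

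The head is repairable within your framework, but it needs a further idea rather than the false uniform bound: split again at $M_1\asymp(\log C)^{2L}$, use the pointwise bound $G(m)\ll(\log\log t)^{L+1}$ below $M_1$ (contributing only $\ll(\log\log t)^{L+1}\log M_1\ll\log t$ to the Abel sum), and for $M_1<M<C^{1/\gamma}$ apply Cauchy--Schwarz together with the second-moment version of your expansion, $\sum_{m\le M}\bigl((Cm+1)/\phi(Cm+1)\bigr)^{2L}\ll_L M+(\log CM)^{2L}\ll M$, to recover $F(M)\ll_L M$ in that range. The paper sidesteps the whole issue differently: it applies Cauchy--Schwarz first, reducing to $\sum_{m\le t}(Cm+1)^{2L-1}/\phi(Cm+1)^{2L}\ll\log t/C$, and then substitutes $n=Cm+1$, so the sum runs over the progression $n\equiv 1\pmod C$ with $n>C$. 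After expanding $(n/\phi(n))^{2L}=1*s$, the boundary term is then supported on $d$ dividing the \emph{first element} of that progression, giving $\frac{1}{C+1}\sum_{d\mid C+1}s(d)=\frac{1}{C+1}\bigl((C+1)/\phi(C+1)\bigr)^{2L}\ll(\log\log t)^{2L}/C$---a divisor sum over the single integer $C+1$ rather than a complete sum over all $e\le CM$. That is precisely the saving your version of the expansion is missing.
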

\begin{proof}
It will suffice to show 
$$\sum_{m \le t} \frac{(Cm+1)^{2L-1}}{\phi(Cm+1)^{2L}} \ll_{L} \frac{\log t}{C}$$
as then by Cauchy--Schwarz we can get that
\begin{align*}
\bigg(\sum_{m \le t} \frac{(Cm+1)^{L}}{\phi(Cm+1)^{L}\phi(m)}\bigg)^{2} &\le \sum_{m \le t} \frac{(Cm+1)^{2L-1}}{\phi(Cm+1)^{2L}}\sum_{m \le t} \frac{(Cm+1)}{\phi(m)^{2}} \\
& \ll_{L}\bigg(\frac{\log t}{C}\bigg)C \log t \\
& \ll_{L} \log^{2} t
\end{align*}
by using \eqref{PhiSum}. Using Mobius inversion, let $s(n)$ be the multiplicative function defined by
$$\frac{n^{2L}}{\phi(n)^{2L}} = 1 * s = \sum_{d \mid n}s(d).$$ Testing at prime powers, we can easily see that
$$s(1)=1,s(p)=\bigg(1-\frac{1}{p}\bigg)^{-2L}-1 \text{ and } s(p^{k})=0 \text{ for all } k\ge 2.$$
Hence 
\begin{align*}
\sum_{m \le t} \frac{(Cm+1)^{2L-1}}{\phi(Cm+1)^{2L}} &= \sum_{\substack{C<n \le Ct+1 \\ n \equiv 1 \pmod{C}}} \frac{n^{2L-1}}{\phi(n)^{2L}} \\
&= \sum_{\substack{C<n \le Ct+1 \\ n \equiv 1 \pmod{C}}}\frac{1}{n} \frac{n^{2L}}{\phi(n)^{2L}} \\
& = \sum_{\substack{C<n \le Ct+1 \\ n \equiv 1 \pmod{C}}}\frac{1}{n} \sum_{d \mid n} s(d) \\
& = \sum_{d\le Ct+1} s(d)\sum_{\substack{C<n \le Ct+1 \\ d \mid n \\ n \equiv 1 \pmod{C}}}\frac{1}{n} .
\end{align*}
By \eqref{Recip2} and noticing that $C$ and $d$ are relatively prime we get
\begin{align*}
\sum_{\substack{C<n \le Ct+1 \\ d \mid n \\ n \equiv 1 \pmod C}}\frac{1}{n} \ll \frac{1}{C+1}+ \frac{\log t}{dC}
\end{align*} where the first term occurs only if $d \mid C+1.$ We require some estimates on $s(d).$

\begin{align*}
\sum_{d\le Ct+1} \frac{s(d)}{d} &\le \prod_{p\le Ct+1}\bigg(1 + \frac{(1-1/p)^{-2L}-1}{p} \bigg) \\
& \le \prod_{p\le Ct+1}\bigg(1 + \frac{C_L}{p^2} \bigg) \\
& = \exp\bigg(\sum_{p\le Ct+1}\log\bigg(1 + \frac{C_L}{p^2}\bigg)\bigg) \\
& = \exp\bigg(\sum_{p\le Ct+1}O_L\bigg(\frac{1}{p^2}\bigg)\bigg) \\
& = \exp(O_L(1)) \\
& \ll_L 1
\end{align*}
and 
\begin{align*}
\sum_{\substack{d\le Ct+1 \\ d \mid C+1}} s(d) & \le \sum_{d \mid C+1} s(d) \\
& = (1*s)(C+1)\\
& = \bigg(\frac{C+1}{\phi(C+1)}\bigg)^{2L}\\
& \ll (\log\log C)^{2L}\\
&\ll_{\gamma} (\log\log t)^{2L}\\
&\ll_{L,\gamma}\log t.
\end{align*}
Therefore
$$\sum_{m \le t} \frac{(Cm+1)^{2L-1}}{\phi(Cm+1)^{2L}} \ll  \sum_{\substack{d\le Ct+1 \\ d\mid C+1}} \frac{s(d)}{C+1}+ \sum_{d\le t}\frac{s(d)\log t}{Cd} \ll_{L,\gamma} \frac{\log t}{C}$$ as needed.
\end{proof}

\begin{lemma} \label{Sieve2}
For positive integers $C_1,C_2,\dots,C_r\le t^{\gamma}$ and non-negative integers $L_1,L_2,\dots,L_r$ we have
\begin{equation} \label{SieveSum2}
\sum_{m \le t} \frac{(C_{1}m+1)^{L_{1}}(C_{2}m+1)^{L_{2}}\dots(C_{r}m+1)^{L_{r}}}{\phi(C_{1}m+1)^{L_{1}}\phi(C_{2}m+1)^{L_{2}}\dots\phi(C_{r}m+1)^{L_{r}}\phi(m)} \ll_{L_{1},\dots,L_{r},\gamma} \log t.
\end{equation}
\end{lemma}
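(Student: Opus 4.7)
The plan is to deduce Lemma \ref{Sieve2} from Lemma \ref{Sieve1} by H\"older's inequality, splitting the single factor $1/\phi(m)$ evenly as $\prod_{i=1}^{r} 1/\phi(m)^{1/r}$ so that each resulting piece has exactly the shape handled by the $r=1$ case already established.

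Concretely, for $1\le i\le r$ set
$$f_i(m) \;=\; \frac{(C_im+1)^{L_i}}{\phi(C_im+1)^{L_i}\,\phi(m)^{1/r}},$$
so that the summand on the left-hand side of \eqref{SieveSum2} is exactly $\prod_{i=1}^{r} f_i(m)$. Applying H\"older's inequality in the form
$$\sum_{m\le t}\prod_{i=1}^{r} a_i(m) \;\le\; \prod_{i=1}^{r}\bigg(\sum_{m\le t} a_i(m)^{r}\bigg)^{\!1/r}$$
(with the $r$ equal exponents $r$, so that $r\cdot (1/r)=1$) to the $f_i$, and raising each $f_i$ to the $r$th power to clear the fractional exponent on $\phi(m)$, one obtains
$$f_i(m)^{r} \;=\; \frac{(C_im+1)^{rL_i}}{\phi(C_im+1)^{rL_i}\,\phi(m)},$$
which is precisely the summand in Lemma \ref{Sieve1} with $L$ replaced by $rL_i$ and $C$ replaced by $C_i$. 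The hypothesis $C_i\le t^\gamma$ is exactly what Lemma \ref{Sieve1} requires, so each inner sum is $\ll_{L_i,\gamma}\log t$; taking the $1/r$--th power and multiplying together $r$ such factors gives $(\log t)^{r/r}=\log t$, as desired.

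Since all the real work (M\"obius inversion, the bound on $\sum s(d)/d$, and the appeal to Lemma \ref{PhiSumLem}) has already been done in Lemma \ref{Sieve1}, there is essentially no obstacle here; the only observation needed is that a single factor of $1/\phi(m)$ can be split evenly across the $r$ H\"older factors to reduce to the one-modulus case. The final implied constant depends only on $L_1,\dots,L_r$ and $\gamma$, as required, since $r$ is determined by the list of indices and each appeal to Lemma \ref{Sieve1} contributes an implied constant depending only on $rL_i$ and $\gamma$.
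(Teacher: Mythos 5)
Your proof is correct. The paper also reduces Lemma \ref{Sieve2} to Lemma \ref{Sieve1}, but does so by induction on $r$ using Cauchy--Schwarz at each step: it splits the $(r+1)$-factor summand into the first $r$ factors times $\phi(m)^{-1/2}$ and the last factor times $\phi(m)^{-1/2}$, squares both pieces (doubling all exponents), and invokes the inductive hypothesis for the first sum and Lemma \ref{Sieve1} for the second. You instead apply H\"older's inequality with $r$ equal exponents in a single stroke, splitting $1/\phi(m)$ as $\prod_{i=1}^r \phi(m)^{-1/r}$; raising each piece to the $r$th power produces exactly the summand of Lemma \ref{Sieve1} with $L$ replaced by $rL_i$. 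The underlying idea -- distribute the single $1/\phi(m)$ factor across the $r$ H\"older pieces -- is the same, but your version avoids the induction entirely and is arguably cleaner. The paper's approach keeps everything at the Cauchy--Schwarz level (the $p=q=2$ case of H\"older), at the cost of an induction and the bookkeeping that the exponents double inside each inductive step. Both give an implied constant depending only on the $L_i$ and $\gamma$, since in your version each appeal to Lemma \ref{Sieve1} uses exponent $rL_i$ and $r$ is determined by the data.
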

\begin{proof}
We proceed by induction. The case $r=1$ is covered by Lemma \ref{Sieve1}.
Suppose 
$$\sum_{m \le t} \frac{(C_{1}m+1)^{L_{1}}(C_{2}m+1)^{L_{2}}...(C_{r}m+1)^{L_{r}}}{\phi(C_{1}m+1)^{L_{1}}\phi(C_{2}m+1)^{L_{2}}\dots\phi(C_{r}m+1)^{L_{r}}\phi(m)} \ll_{L_{1},\dots,L_{r},\gamma} \log t.$$
By Cauchy--Schwarz, we get that
\begin{align*}
\bigg( \sum_{m \le t} & \frac{(C_{1}m+1)^{L_{1}}(C_{2}m+1)^{L_{2}}\dots(C_{r+1}m+1)^{L_{r+1}}}{\phi(C_{1}m+1)^{L_{1}}\phi(C_{2}m+1)^{L_{2}}\dots\phi(C_{r+1}m+1)^{L_{r+1}}\phi(m)}\bigg)^2 \\
& \le \sum_{m \le t} \frac{(C_{1}m+1)^{2L_{1}}(C_{2}m+1)^{2L_{2}}\dots(C_{r}m+1)^{2L_{r}}}{\phi(C_{1}m+1)^{2L_{1}}\phi(C_{2}m+1)^{2L_{2}}\dots\phi(C_{r}m+1)^{2L_{r}}\phi(m)}\sum_{m \le t} \frac{(C_{r+1}m+1)^{2L_{r+1}}}{\phi(C_{r+1}m+1)^{2L_{r+1}}\phi(m)} \\
& \ll_{L_{1},\dots,L_{r+1},\gamma} \log^2 t
\end{align*}
by Lemma \ref{Sieve1}, completing the proof.
\end{proof}

\begin{lemma} \label{Sieve3}
For positive integers $C_1,C_2,...,C_r\le t^{\gamma}$ and non-negative integers $L_1,L_2,...,L_r,L$ we have
\begin{equation} \label{SieveSum3}
\sum_{m \le t} \frac{(C_{1}m+1)^{L_{1}}(C_{2}m+1)^{L_{2}}...(C_{r}m+1)^{L_{r}}m^{L-1}}{\phi(C_{1}m+1)^{L_{1}}\phi(C_{2}m+1)^{L_{2}}\dots\phi(C_{r}m+1)^{L_{r}}\phi(m)^{L}} \ll_{L_{1},\dots,L_{r},L,\gamma} \log t.
\end{equation}
\end{lemma}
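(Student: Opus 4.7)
The plan is to prove Lemma \ref{Sieve3} by a direct Cauchy--Schwarz argument, bootstrapping off the two lemmas already established: Lemma \ref{Sieve2} (which handles the case $L=1$, i.e.\ $m^{L-1}/\phi(m)^L = 1/\phi(m)$) and Lemma \ref{PhiSumLem} (which controls pure moments of $m/\phi(m)$). The extra difficulty over Lemma \ref{Sieve2} is the factor $m^{L-1}/\phi(m)^{L-1}$, and Lemma \ref{PhiSumLem} is precisely designed to tame a power of $m/\phi(m)$.

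Assuming $L\ge 1$ (the $L=0$ case follows from $L=1$ via $1/m \le 1/\phi(m)$), I would split the summand as $a_m b_m$ with
\[
a_m = \frac{(C_1 m+1)^{L_1}\cdots (C_r m+1)^{L_r}}{\phi(C_1 m+1)^{L_1}\cdots \phi(C_r m+1)^{L_r}\,\phi(m)^{1/2}}, \qquad b_m = \frac{m^{L-1}}{\phi(m)^{L-1/2}},
\]
so that the product is exactly the summand in \eqref{SieveSum3}. Cauchy--Schwarz then gives
\[
\Bigl(\sum_{m\le t} a_m b_m\Bigr)^{\!2} \le \Bigl(\sum_{m\le t} a_m^2\Bigr)\Bigl(\sum_{m\le t} b_m^2\Bigr).
\]
The first factor equals
\[
\sum_{m\le t} \frac{(C_1 m+1)^{2L_1}\cdots (C_r m+1)^{2L_r}}{\phi(C_1 m+1)^{2L_1}\cdots \phi(C_r m+1)^{2L_r}\,\phi(m)},
\]
which is $\ll_{L_1,\dots,L_r,\gamma} \log t$ by Lemma \ref{Sieve2} applied with exponents $2L_i$.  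The second factor is
\[
\sum_{m\le t}\frac{m^{2L-2}}{\phi(m)^{2L-1}},
\]
which is $\ll_L \log t$ by Lemma \ref{PhiSumLem} with the parameter $L$ there set to $2L-2$. Multiplying gives $\log^2 t$, and taking square roots yields the desired bound.

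There is no real obstacle: the lemma is a routine consequence of the two prior estimates, and the only minor subtlety is choosing the Cauchy--Schwarz split so that the $\phi(m)^{-1}$ needed by Lemma \ref{Sieve2} sits in $a_m^2$ while the remaining $m/\phi(m)$ moment sits in $b_m^2$. Balancing the half-integer exponent on $\phi(m)$ between $a_m$ and $b_m$ (as above) is the only thing one has to get right.
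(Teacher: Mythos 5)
Your proposal is correct and follows essentially the same route as the paper: the paper also applies Cauchy--Schwarz with precisely the split $a_m^2 = (C_1m+1)^{2L_1}\cdots(C_rm+1)^{2L_r}/\bigl(\phi(C_1m+1)^{2L_1}\cdots\phi(C_rm+1)^{2L_r}\phi(m)\bigr)$ and $b_m^2 = m^{2L-2}/\phi(m)^{2L-1}$, invoking Lemma \ref{Sieve2} for the first factor and Lemma \ref{PhiSumLem} for the second. The only small addition you make is explicitly noting that the $L=0$ case reduces to $L=1$ via $1/m\le 1/\phi(m)$, which the paper leaves implicit.
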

\begin{proof}
Once again we'll use Cauchy--Schwarz and the previous lemmas.
\begin{align*}
\bigg(\sum_{m \le t} & \frac{(C_{1}m+1)^{L_{1}}(C_{2}m+1)^{L_{2}}\dots(C_{r}m+1)^{L_{r}}m^{L-1}}{\phi(C_{1}m+1)^{L_{1}}\phi(C_{2}m+1)^{L_{2}}\dots\phi(C_{r}m+1)^{L_{r}}\phi(m)^{L}}  \bigg)^2 \\
& \le \sum_{m \le t}\frac{(C_{1}m+1)^{2L_{1}}(C_{2}m+1)^{2L_{2}}\dots(C_{r}m+1)^{2L_{r}}}{\phi(C_{1}m+1)^{2L_{1}}\phi(C_{2}m+1)^{2L_{2}}\dots\phi(C_{r}m+1)^{2L_{r}}\phi(m)}\sum_{m \le t} \frac{m^{2L-2}}{\phi(m)^{2L-1}} \\
& \ll_{L_{1},\dots,L_{r},L,\gamma} \log^2 t
\end{align*} by Lemmas \ref{PhiSumLem} and \ref{Sieve2}.
\end{proof}

\section{More Summations involving $\pi(t,p,1)$}\label{MoreSums}

The previous section involved lemmas required to prove summations including terms such as $\pi(t,p,1).$ A lot of these summations will involve sieving techniques. This section will be split into proofs of two lemmas involving the summations required for the sums arising from the Propositions \ref{M1} and \ref{M2}.

\begin{lemma}\label{Sums}
Let $b,k,l$ be positive integers with $2 \le l \le k.$ Let $t>e^{e}$ be a real number and let constants $\al,\al_1,\al_2$  satisfy $0<\al<1/2$ and $0<\al_{1}<\al_{2}<1/2.$
\begin{enumerate}
\item[(a)] If $b>t^{\al},$ then 
\begin{equation}\label{Sum1}\sum_{p_{k} \in \p_{b}} \sum_{p_{k-1} \in \p_{p_k}}  \dots  \sum_{p_{2} \in \p_{p_3}} \pi(t;p_{2},1) \ll \frac{t \log t (\log \log t)^{k-2} }{b}.\end{equation}
\item[(b)] If $b\le t^{\al_{1}},$ then 
\begin{equation}\label{Sum2}\sum_{\substack{p_{l} \in \p_{b}\\p_ l>t^{\al_{2}}}} \sum_{p_{l-1} \in \p_{p_l}}  \dots \sum_{p_{2} \in \p_{p_3}} \pi(t;p_{2},1) \ll \frac{b^{l-1}t}{\phi(b)^{l} \log t}.\end{equation}
\item[(c)]  If $b\le t^{\al_{1}},$ then 
\begin{equation}\label{Sum3}\sum_{\substack{p_{l} \in \p_{b}}} \sum_{p_{l-1} \in \p_{p_l}}  \dots  \sum_{p_{2} \in \p_{p_3}} \pi(t;p_{2},1) \ll \frac{t (\log \log t)^{l-1}}{\phi(b) \log t}.\end{equation}

\end{enumerate} The implicit constants in $(a)-(c)$ depend on the choices of the $\alpha.$
\end{lemma}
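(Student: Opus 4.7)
\medskip

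\noindent\textbf{Proof plan for Lemma \ref{Sums}.} The strategy for all three parts is the same: apply Brun--Titchmarsh \eqref{BT1} to the innermost term $\pi(t;p_2,1)$, then iterate outward through the chain of sums, using \eqref{BT}, \eqref{BT2}, \eqref{BT3} for the intermediate levels and, in part (b), the sieve estimates of Section 8. The main care lies in the $\log(t/p_2)$ factor of \eqref{BT1}, which degenerates when $p_2$ is close to $t$.

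For part (a), the hypothesis $b>t^{\alpha}$ is loose, so a crude approach suffices. Bound $\pi(t;p_2,1)\ll t/p_2$ and peel off the sums $\sum_{p_{j}\in\p_{p_{j+1}}}$ one level at a time using \eqref{BT}; each application costs a factor of $\log\log t$ and divides by $\phi$ of the next prime. After $k-1$ such iterations and a final sum over $p_k\in\p_b$, we obtain $t(\log\log t)^{k-1}/\phi(b)$, which is dominated by $t\log t\,(\log\log t)^{k-2}/b$ since $\phi(b)\gg b/\log\log b$ and $b>t^{\alpha}$.

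For part (c), use the full strength of \eqref{BT1}: $\pi(t;p_2,1)\ll t/(\phi(p_2)\log(t/p_2))$. For $p_2\le t^{1/2}$ the factor $\log(t/p_2)$ is $\asymp\log t$, and summing $1/\phi(p_2)$ over $p_2\in\p_{p_3}$ via \eqref{BT2} produces $t\log\log t/(\phi(p_3)\log t)$. For $p_2\in(t^{1/2},t]$ decompose dyadically, $p_2\in(t/2^{j+1},t/2^j]$, so that within each band Brun--Titchmarsh contributes $\ll (t/2^j)/(\phi(p_3)\log t)$ primes and each has $\pi(t;p_2,1)\le 2^{j+1}$; the total telescopes to $\ll t\log\log t/(\phi(p_3)\log t)$ after summing the $O(\log t)$ bands with the usual harmonic $\sum1/j$ savings. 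Iterating the outer sums through \eqref{BT2}/\eqref{BT3} contributes one $\log\log t/\phi$ per level, giving $t(\log\log t)^{l-1}/(\phi(b)\log t)$.

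For part (b), the constraint $p_l>t^{\alpha_2}$ together with $b\le t^{\alpha_1}$ forces every prime in the chain into $(t^{\alpha_2},t)$. The idea is to parametrize $p_l=bm_l+1$, $p_{l-1}=p_l m_{l-1}+1$, and so on, reducing the chain to a sum over $(m_2,\ldots,m_l)$ of a product of terms of shape $(Cm+1)/\phi(Cm+1)$. Applying \eqref{BT1} to $\pi(t;p_2,1)$ produces one factor $t/\log t$, and the remaining sums are controlled by Lemmas~\ref{Sieve1}--\ref{Sieve3}: each level of sieving for the primality of the next $p_j$ propagates the $\pmod b$ condition up the chain and contributes a factor of $b/\phi(b)$, accounting for the $(b/\phi(b))^{l-1}$ in the bound, while the innermost primality count produces the single remaining $1/\phi(b)$. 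The hypothesis $b\le t^{\alpha_1}$ is precisely what is needed to apply the sieve lemmas ($C\le t^{\gamma}$), and $p_l>t^{\alpha_2}$ keeps all intermediate moduli in a range where the $\log(t/p_j)$ factors stay $\asymp\log t$.

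The main obstacle is part (b): a direct iteration of \eqref{BT2} would yield an extra $(\log\log t)^{l-1}$ that is not present in the target bound, so one must use the sieve lemmas to simultaneously handle the primality of $p_2,p_3,\ldots,p_l$ and extract the factor $b^{l-1}/\phi(b)^l$ rather than $\log\log t/\phi(b)$. Handling the $p_2$ near $t$ contribution by a two-form sieve (sieving $p_2$ and $p_2 k+1=q$ together) is the delicate point; the decomposition into dyadic pieces combined with Lemma~\ref{Sieve2} is what makes this contribution fit within $O(t/(\phi(b)\log t))$.
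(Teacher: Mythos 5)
Your treatment of part (a) is sound and essentially the same as the paper's: iterate Brun--Titchmarsh; the paper actually uses the elementary harmonic bound $\sum_{m\equiv 1\,(b),\,b<m\le t}1/m\ll\log t/b$ at the outermost level rather than Brun--Titchmarsh, but your version, which yields $t(\log\log t)^{k-1}/\phi(b)$ and then compares against the target via $\phi(b)\gg b/\log\log b$, also works.

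For part (b), however, the mechanics you describe would not yield the claimed bound. Applying Brun--Titchmarsh once to the innermost $\pi(t;p_2,1)$ is a one-dimensional sieve; after that the remaining sums over $p_3,\dots,p_l$ are still constrained to be prime, and Lemmas~\ref{Sieve1}--\ref{Sieve3} do not enforce primality --- they only control sums of $n/\phi(n)$-type weights over \emph{all} integers $m$. So after your step you are back to iterating Brun--Titchmarsh, which reintroduces the $(\log\log t)^{l-1}$ you correctly identify as fatal. Likewise the ``two-form sieve'' you mention in the last paragraph is insufficient for $l>2$. What the paper actually does is fix the multipliers $m_1,\dots,m_{l-1}$ (so that the entire chain $p_2=m_2p_3+1,\ p_3=m_3p_4+1,\dots$ and the outer prime $p=m_1p_2+1$ all become \emph{linear forms in the single variable $p_l$}) and then applies Brun's sieve of dimension $l$ to the simultaneous primality of these $l$ forms. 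This is what produces the $(\log)^l$ in the denominator plus the singular-series factors $(b/\phi(b))^{l-1}\cdot(bc_1\cdots c_{l-1})/\phi(bc_1\cdots c_{l-1})\cdot(E/\phi(E))^{l-1}$. Only \emph{then} does one sum over $m_1,\dots,m_{l-1}$, and this is where Lemmas~\ref{PhiSumLem}, \ref{Sieve1}--\ref{Sieve3} enter: they bound that sum by $(\log t)^{l-1}$, recovering the claimed $b^{l-1}t/(\phi(b)^l\log t)$. The hypothesis $p_l>t^{\alpha_2}$ (with $b\le t^{\alpha_1}<t^{\alpha_2}$) is what guarantees $m_1\cdots m_{l-1}b\le t^{1-\alpha_2+\alpha_1}$, keeping the sieve's logarithm $\asymp\log t$.

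For part (c), the dyadic decomposition you propose for $p_2\in(t^{1/2},t]$ does not by itself produce the $1/\log t$ savings: the per-band contribution is $\asymp t/(\phi(p_3)\log t)$ and summing $\asymp\log t$ bands cancels the gain unless one again sieves the pair $(p_2,p)$ simultaneously --- and even then this only resolves $l=2$, whereas for larger $l$ an entire suffix $p_2>p_3>\cdots>p_m$ of the chain can exceed $t^{\alpha_2}$. The paper's route is cleaner: split on whether $p_l>t^{\alpha_2}$ (then part (b) together with $b/\phi(b)\ll\log\log b$ gives the bound directly), and if not, locate the first index $m$ with $p_m\le t^{\alpha_2}$ and apply part (b) to the inner subchain $p_2,\dots,p_{m-1}$ with modulus $p_m$, while the outer levels $p_{m},\dots,p_l$, all $\le t^{\alpha_2}$, are handled by ordinary Brun--Titchmarsh iteration. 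That reuse of (b) is the piece your plan is missing.
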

\begin{proof}
For \eqref{Sum1} we just use the trivial estimate $\pi(t;p_{2},1) \le t/p_{2}$ and several uses of Brun-Titchmarsh \eqref{BT} to get
\begin{align*}
\sum_{p_{k} \in \p_{b}} \sum_{p_{k-1} \in \p_{k}}  \dots  \sum_{p_{2} \in \p_{3}} \pi(t;p_{2},1) & \le \sum_{p_{k} \in \p_{b}} \sum_{p_{k-1} \in \p_{k}} \dots  \sum_{p_{2} \in \p_{3}} \frac{t}{p_{2}} \\
& \ll t \sum_{p_{k} \in \p_{b}} \sum_{p_{k-1} \in \p_{k}}  \dots  \sum_{p_{3} \in \p_{4}} \frac{\log \log t}{p_{3}} \\
& \ll t \sum_{p_{k} \in \p_{b}} \frac{(\log \log t)^{k-2}}{p_{k}} \\
& \le t \sum_{\substack{m \equiv 1 \pmod{b} \\ t^{\alpha} \le m \le t}} \frac{(\log \log t)^{k-2}}{m} \\
& \le \frac{t \log t(\log \log t)^{k-2}}{b}
\end{align*} where $m>1$ and $m \equiv 1 \pmod{b}$ imply that $m>b$ and by using \eqref{Recip1}. As for \eqref{Sum2} we get
\begin{align*}
\sum_{\substack{p_{l} \in \p_{b}\\ l>t^{\al_{2}}}} & \sum_{p_{l-1} \in \p_{l}}  \dots  \sum_{p_{2} \in \p_{3}} \pi(t;p_{2},1) \\
& = \sum_{\substack{p_{l} \in \p_{b}\\ l>t^{\al_{2}}}} \sum_{p_{l-1} \in \p_{l}}  \dots  \sum_{p_{3} \in \p_{4}} \#\{ (m_{1},p_{2}): p_{2}=1 \pmod{p_{3}}, p_{2}>t^{\al_{2}},m_{1}p_{2}+1 \le t, p_{2}, m_{1}p_{2}+1 \text{ prime}  \} \\
& = \sum_{\substack{p_{l} \in \p_{b}\\ l>t^{\al_{2}}}} \sum_{p_{l-1} \in \p_{l}}  \dots  \sum_{p_{4} \in \p_{5}} \#\{ (m_{1},m_{2},p_{3}): p_{3}=1 \pmod{p_{4}}, p_{3}>t^{\al_{2}},m_{1}(m_{2}p_{3}+1)+1 \le t, \\ & \hskip10mm \{p_{3}, m_{2}p_{3}+1,   m_{1}(m_{2}p_{3}+1)+1\}  \text{ prime}  \} \\
&= \#\{ (m_{1},m_{2},\dots,m_{l-1},p_{l}): p_{l}=1 \pmod{b}, p_{l}>t^{\al_{2}}, m_{1}(m_{2}\dots(m_{l-2}(m_{l-1}p_{l}+1)+1)+\dots\\ & \hskip10mm +1 \le t, \{ p_{l}, m_{l-1}p_{l}+1, m_{l-2}(m_{l-1}p_{l}+1)+1,\dots,m_{1}(m_{2}\dots(m_{l-2}(m_{l-1}p_{l}+1)+1)\\& \hskip20mm+\dots+1\} \text{ prime}  \} \\
& \le \sum_{m_{1}\dotsm_{l-1} \le t^{1-\al_{2}}}\#\{ p_{l} < t/m_{1}\dots m_{l-1}: p_{l}=1 \pmod{b}, \\ &\hskip10mm \{ p_{l}, m_{l-1}p_{l}+1, m_{l-2}(m_{l-1}p_{l}+1)+1,\dots,m_{1}(m_{2}...(m_{l-2}(m_{l-1}p_{l}+1)+1)+\dots+1\} \text{ prime}  \}.
\end{align*}
From here will need to use Brun's Sieve method (see \cite[Theorem 2.4]{HR}) to get that

\begin{align*}
\#\{ p_{l} < & t/m_{1}\dots m_{l-1}: p_{l}=1 \pmod{b}, \\ &  \{ p_{l}, m_{l-1}p_{l}+1, m_{l-2}(m_{l-1}p_{l}+1)+1,\dots,m_{1}(m_{2}\dots(m_{l-2}(m_{l-1}p_{l}+1)+1)+\dots+1\} \text{ prime}  \} \\
& \ll \frac{E^{l-1}}{\phi(E)^{l-1}}\frac{b^{l-1}}{\phi(b)^{l-1}}\frac{bc_{1}\dots c_{l-1}}{\phi(bc_{1}\dots c_{l-1})}\frac{t/m_{1}\dots m_{l-1}b}{(\log t/m_{1}\dots m_{l-1}b)^{l}}
\end{align*}
where the $c_{i}$ and $E$ are 
\begin{align*}
E=&\bigg(\prod_{i=1}^{l-1}m_{i}^{i(i+1)/2}\bigg)(1+m_{1}+m_{1}m_{2}+\dots +m_{1}\dots m_{l-3})(1+m_{2}+m_{2}m_{3}+\dots +m_{2}\dots m_{l-3})\\ & \dots (1+m_{l-3})(1+m_{1}+m_{1}m_{2}+\dots +m_{1}\dots m_{l-4})(1+m_{2}+m_{2}m_{3}+\dots +m_{2}\dots m_{l-4})\\ & \dots (1+m_{l-4})\dots (1+m_1)
\end{align*}
and for $1 \le i \le l-1,$
\begin{align*}
c_{i}=1+ m_{i} + m_{i}m_{i+1} + \dots + m_{i} \dots m_{l-2}, c_{l-1}=1.
\end{align*}
Now using $\phi(mn) \le \phi(m)\phi(n)$ and $m_{1}\dots m_{l-1}b \le t^{1+\al_{1}-\al_{2}}$ where $1+\al_{1}-\al_{2} < 1$ we get

\begin{align*}
& \ll \frac{E^{l-1}}{\phi(E)^{l-1}}\frac{b^{l-1}}{\phi(b)^{l}}\frac{c_{1}}{\phi(c_{1})}\dots \frac{c_{l-1}}{\phi(c_{l-1})}\frac{t}{m_{1}\dots m_{l-1}(\log t)^{l}}.
\end{align*}
Using $$\frac{m^{L}}{\phi(m^{L})}=\frac{m}{\phi(m)},$$ we get the sum
$$\sum_{m_{1}\dots m_{l-1} \le t^{1-\al_{2}}}\frac{E^{l-1}}{\phi(E)^{l-1}}\frac{c_{1}}{\phi(c_{1})}\dots\frac{c_{l-1}}{\phi(c_{l-1})}\frac{1}{m_{1}\dots m_{l-1}} = \sum_{m_{1}\dots m_{l-1} \le t^{1-\al_{2}}}\frac{(E^*)^{l-1}}{\phi(E^*)^{l-1}}\frac{c_{1}}{\phi(c_{1})}\dots \frac{c_{l-1}}{\phi(c_{l-1})}\frac{1}{m_{1}\dots m_{l-1}}$$ where 

\begin{align*}
E^*=&(1+m_{1}+m_{1}m_{2}+\dots +m_{1}\dots m_{l-3})(1+m_{2}+m_{2}m_{3}+\dots +m_{2}\dots m_{l-3})\\ & \dots (1+m_{l-3})(1+m_{1}+m_{1}m_{2}+\dots +m_{1}\dots m_{l-4})(1+m_{2}+m_{2}m_{3}+\dots +m_{2}\dots m_{l-4})\\ & \dots (1+m_{l-4})\dots (1+m_1).
\end{align*}
We have that every factor in $E^*$ as well as the $c_{i}$ are of the form $1+Cm_{i}$ for some $i$ or of the form $m_{i}^L$. Hence using $l-1$ applications of Lemmas \ref{PhiSumLem}, \ref{Sieve2} or \ref{Sieve3} we can pick off the factors of the form $(1+Cm_i)$ one at a time.

\begin{align*}\sum_{m_{1}\dots m_{l-1} \le t^{1-\al_{2}}}& \frac{E^{l-1}}{\phi(E)^{l-1}}\frac{c_{1}}{\phi(c_{1})}...\frac{c_{l-1}}{\phi(c_{l-1})}\frac{1}{m_{1}\dots m_{l-1}} \\
& \ll \sum_{m_{2}\dots m_{l-1} \le t^{1-\al_{2}}}\frac{(E')^{l-1}}{\phi(E')^{l-1}}\frac{c'_{1}}{\phi(c'_{1})}\dots\frac{c'_{l-1}}{\phi(c'_{l-1})}\frac{1}{m_{2}\dots m_{l-1}}(\log t) \\
& \ll \sum_{m_{3}\dots m_{l-1} \le t^{1-\al_{2}}}\frac{(E'')^{l-1}}{\phi(E'')^{l-1}}\frac{c''_{1}}{\phi(c''_{1})}\dots \frac{c''_{l-1}}{\phi(c''_{l-1})}\frac{1}{m_{3}\dots m_{l-1}}(\log^2 t) \\
& \ll \dots  \ll (\log t)^{l-1}.\end{align*} where the $E^{(e)},c^{(e)}_i$ denote the $E^*$ and $c_i$ terms with the factors of the form $1+Cm_1$ through $1+Cm_e$ removed. Note that the $C$ are at most $1+t+t^2+\dots+t^{k-3}\le t^{k-2}$ and $l\le k$ so the implied constant only depends on $k$.
Therefore
$$\sum_{\substack{p_{l} \in \p_{b}\\ l>t^{\al_{2}}}} \sum_{p_{l-1} \in \p_{l}} \dots  \sum_{p_{2} \in \p_{3}} \pi(t;p_{2},1) \ll \frac{tb^{l-1}}{\phi(b)^{l}(\log t)^{l}}(\log t)^{l-1} = \frac{tb^{l-1}}{\phi(b)^{l}\log t}.$$
As for part (c), first note that $b/\phi(b) \ll \log\log b,$ so for $p_l>t^{\alpha_2}$, we get that part $(b)$ implies our bound. As for $p_l\le t^{\alpha_2}$ we'll split it into cases where $p_3$ is less than or greater than $t^{\alpha_2}.$ If $p_3\le t^{\alpha_2},$ then 

\begin{align*}
\sum_{\substack{p_{l} \in \p_{b}\\p_ l \le t^{\al_{2}}}} \sum_{p_{l-1} \in \p_{l}}  \dots  \sum_{\substack{p_{2} \in \p_{3}\\ p_2 \le t^{\alpha_2}}} \pi(t;p_{2},1) & \ll \sum_{\substack{p_{l} \in \p_{b}\\p_ l \le t^{\al_{2}}}} \sum_{p_{l-1} \in \p_{l}}  \dots  \sum_{\substack{p_{2} \in \p_{3}\\ p_2 \le t^{\alpha_2}}} \frac{t}{\phi(p_{2})\log t/p_{2}} \\
& \ll \sum_{\substack{p_{l} \in \p_{b}\\p_ l \le t^{\al_{2}}}} \sum_{p_{l-1} \in \p_{l}}\dots    \sum_{\substack{p_{2} \in \p_{3}\\ p_2 \le t^{\alpha_2}}} \frac{t}{p_{2}\log t} \\
& \ll \sum_{p_{l} \in \p_{b}}\frac{t (\log\log t)^{l-2}}{p_{l} \log t} \\
& \ll \frac{t (\log\log t)^{l-1}}{\phi(b) \log t}
\end{align*}
If $p_3> t^{\alpha_2},$ then since $b\le t^{\alpha_2}$ there is a minimum $m$ such that $p_m\le t^{\alpha_2}$. So using part (b) with $l=m$ we get

\begin{align*}
\sum_{\substack{p_{l} \in \p_{b}\\p_ l \le t^{\al_{2}}}} \sum_{p_{l-1} \in \p_{l}}  \dots  \sum_{\substack{p_{2} \in \p_{3}\\ p_2 > t^{\alpha_2}}} \pi(t;p_{2},1) & \ll \sum_{\substack{p_{l} \in \p_{b}\\p_ l \le t^{\al_{2}}}} \sum_{p_{l-1} \in \p_{l}}  \dots  \sum_{p_{m+1} \in \p_{m+2}} \frac{(p_{m-1})^{m-1}t}{\phi(p_{m-1})^m\log t} \\
& \ll \sum_{\substack{p_{l} \in \p_{b}\\p_ l \le t^{\al_{2}}}} \sum_{p_{l-1} \in \p_{l}}  \dots  \sum_{p_{m+1} \in \p_{m+2}} \frac{t}{p_{m-1}\log t}\\
& \ll \frac{t (\log\log t)^{l-m}}{\phi(b) \log t} \\
& \ll \frac{t (\log\log t)^{l-1}}{\phi(b) \log t}
\end{align*}
since $m\ge 2$ and by using Brun-Titchmarsh \eqref{BT} which finishes part (c) and the lemma.
\end{proof}
As for the summations requires for the second moment, we'll note that we need twice as many sums due to $h_k(p)^2$. However the techniques required are similar.

\begin{lemma}\label{Doubles} Let $t> e^e$ and $0<2\alpha_1 < \alpha_2 <1/2$. Then
\begin{enumerate}
\item[(a)] If $b_{1}>t^{\alpha_{1}}$ or $b_{2}>t^{\alpha_{1}}$ then
\begin{equation} \label{Doublea}
\sum_{\substack{p_{2} \in \p_{b_{1}} \\ r_{2} \in \p_{b_{2}}}} \pi(t;p_{2}r_{2},1) \ll \frac{t\log^{2}t}{b_{1}b_{2}}.
\end{equation}
\item[(b)] If neither $b_{1}$ nor $b_{2}$ exceeds $t^{\alpha_{1}},$ then 
\begin{equation} \label{Doubleb}
\sum_{\substack{p_{k} \in \p_{b_{1}} \\ r_{k} \in \p_{b_{2}} \\ p_{k}r_{k}>t^{\alpha_2} }} ... \sum_{\substack{p_{2} \in \p_{p_{3}} \\ r_{2} \in \p_{r_{3}}}} \pi(t;p_{2}r_{2},1) \ll \frac{t(\log \log t)^{k-1}b_{2}^{k-1}}{\phi(b_{1})\phi(b_{2})^{k}\log t} + \frac{t(\log \log t)^{k-1}b_{1}^{k-1}}{\phi(b_{2})\phi(b_{1})^{k}\log t}.
\end{equation}
\item[(c)] If neither $b_{1}$ nor $b_{2}$ exceeds $t^{\alpha_{1}},$ then 
\begin{equation} \label{Doublec}
\sum_{\substack{p_{k} \in \p_{b_{1}} \\ r_{k} \in \p_{b_{2}}}} ... \sum_{\substack{p_{2} \in \p_{p_{3}} \\ r_{2} \in \p_{r_{3}}}} \pi(t;p_{2}r_{2},1) \ll \frac{t (\log\log t)^{2k-2}}{\phi(b_{1})\phi(b_{2}) \log t}.
\end{equation}
\item[(d)] If neither $b_{1}$ nor $b_{2}$ exceeds $t^{\alpha_{1}},$ then 
\begin{equation} \label{Doubled}
\sum_{\substack{p_{k} \in \p_{b_{1}} \\ r_{k} \in \p_{b_{2}}}} ... \sum_{\substack{p_{3} \in \p_{p_{4}} \\ r_{3} \in \p_{r_{4}}}}\sum_{s \in \p_{p_{3}} \cap \p_{r_{3}}} \pi(t;s,1) \ll \frac{t (\log\log t)^{2k-2}}{\phi(b_{1})\phi(b_{2}) \log t}.
\end{equation}
\end{enumerate} Again the implicit constants depend on our choice of the $\alpha.$
\end{lemma}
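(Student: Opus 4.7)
\emph{Plan.} The four parts extend Lemma \ref{Sums} to sums controlled by two parallel chains that meet either at $p_2r_2$ (parts (a)--(c)) or at an intermediate prime $s$ (part (d)). The backbone is to unfold each chain by writing $p_i=u_ip_{i+1}+1$ and $r_i=v_ir_{i+1}+1$, then bound the resulting multi-prime counting problem by Brun's sieve as in Lemma \ref{Sums}(b), and finally absorb the resulting $\phi$-ratios via Lemmas \ref{PhiSumLem}--\ref{Sieve3}.

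For part (a), when one of $b_1,b_2$ exceeds $t^{\alpha_1}$ there is no chain structure to exploit, so I would apply the trivial Brun--Titchmarsh bound $\pi(t;p_2r_2,1)\ll t/(p_2r_2)$ and estimate each of the independent single-variable sums by $\sum_{p\in\p_{b_i},\,p\le t}1/p\ll\log t/b_i$ via \eqref{Recip2} (using $p\ge b_i+1$). Multiplying yields $t\log^2 t/(b_1b_2)$.

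For part (b), I would rewrite the left-hand side as a count over tuples $(u_1,\ldots,u_{k-1},v_1,\ldots,v_{k-1},\mu)$ subject to the simultaneous primality of the $2k-1$ linear forms $p_k,\ldots,p_2,r_k,\ldots,r_2$ and $q=\mu p_2r_2+1$, with $q\le t$. Brun's sieve then yields a main factor $t/(\log t)^{2k-1}$ times a product of $\phi$-ratios in the $u_i,v_i,\mu$ directly analogous to the auxiliary products $E$ and $c_i$ appearing in the proof of Lemma \ref{Sums}(b); I would peel off these variables via Lemmas \ref{PhiSumLem}--\ref{Sieve3}, converting $2k-2$ of the $(\log t)^{-1}$ into $O(1)$ factors. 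The two summands in the stated bound arise by splitting on whether $p_k>t^{\alpha_2/2}$ or $r_k>t^{\alpha_2/2}$, which is forced by $p_kr_k>t^{\alpha_2}$: on the dominant side the corresponding chain is consumed by the sieve exactly as in Lemma \ref{Sums}(b), producing the matching $b^{k-1}/\phi(b)^{k-1}$ factor and an extra $1/\phi(b)$ from the outermost modulus, while the opposite chain is handled more crudely by repeated use of \eqref{BT2}.

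Part (c) splits according to whether $p_2r_2>t^{\alpha_2}$. If yes, part (b) applies directly; otherwise $p_2r_2\le t^{\alpha_2}$, Brun--Titchmarsh gives $\pi(t;p_2r_2,1)\ll t/(p_2r_2\log t)$, and telescoping both chains with $2(k-1)$ iterations of \eqref{BT2} produces the $(\log\log t)^{2k-2}/(\phi(b_1)\phi(b_2))$ factor. For part (d), since $s\in\p_{p_3}\cap\p_{r_3}$ is equivalent to $s\equiv 1\pmod{[p_3,r_3]}=p_3r_3$ in the generic case $p_3\ne r_3$, the argument from part (c) applied with the joint modulus $p_3r_3$ at level $3$ gives the same bound, while the diagonal case $p_3=r_3$ contributes strictly less and is absorbed. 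The main obstacle across the lemma is the sieve bookkeeping in part (b)---tracking the two-chain analogues of the auxiliary products $E,c_i$ of Lemma \ref{Sums}(b) cleanly enough that Lemmas \ref{Sieve2}--\ref{Sieve3} apply term by term, while keeping the effective modulus of the sieve below $t^{1-\alpha_2/2}$ on the dominant side.
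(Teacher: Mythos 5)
Your overall skeleton---trivial bound in (a), then split the remaining parts into a large-product range handled by Brun's sieve along one chain and a small-product range handled by iterated Brun--Titchmarsh---is the paper's approach, and several sentences of your plan (the ``two summands'' sentence in (b), much of (c), and (d)) track it closely. But two steps would break if carried out as written.

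In (b), unrolling \emph{both} chains in multipliers $u_1,\ldots,u_{k-1},v_1,\ldots,v_{k-1},\mu$ leaves two free variables, $p_k$ and $r_k$, and the constraint $q=\mu p_2 r_2+1\le t$ couples them along a hyperbola. Brun's sieve in the form the paper cites (\cite[Theorem 2.4]{HR}) is a one-parameter sieve and does not give $t/(\log t)^{2k-1}$ from this two-variable configuration; moreover, the $p$-chain forms do not vary with the $r$-chain variable, so they contribute nothing to the sifting once you fix one variable to sieve the other. The paper only ever sieves a single chain: it keeps the $p_i$ as actual primes (to be absorbed later by Brun--Titchmarsh), unrolls only the $r$-chain, and sieves the $k$ forms $r_k,\ldots,r_2,q$ in the single variable $r_k$, with $p_2$ frozen inside $q$---exactly what your ``two summands'' sentence describes. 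That is also why the target contains $(\log\log t)^{k-1}$: it comes from the Brun--Titchmarsh pass over the unsieved chain. A genuinely symmetric double sieve, even if it could be made rigorous, would produce $\phi$-ratios symmetric in $b_1,b_2$ with no $\log\log$ factor, which is not the shape of \eqref{Doubleb}. (Your $\log t$ count also fails to close: summing $2k-1$ multipliers via Lemmas \ref{PhiSumLem}--\ref{Sieve3} returns $(\log t)^{2k-1}$, leaving $t$, not $t/\log t$.)

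In (c), the split must be on $p_kr_k$, not on $p_2r_2$. Part (b)'s hypothesis $p_kr_k>t^{\alpha_2}$, together with $b_1,b_2\le t^{\alpha_1}$ and $\alpha_2>2\alpha_1$, is precisely what makes the sifting range $t/(p_2 m_1\cdots m_{k-1}b_2)>r_k/b_1>t^{\alpha_2/2-\alpha_1}$ a positive power of $t$, which Brun's sieve requires. Since $p_2>p_3>\cdots>p_k$ and likewise for the $r_i$, we only have $p_2r_2>p_kr_k$, so $p_2r_2>t^{\alpha_2}$ does not force $p_kr_k>t^{\alpha_2}$, and ``part (b) applies directly'' fails on the sub-range $p_kr_k\le t^{\alpha_2}<p_2r_2$. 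With the split on $p_kr_k$, the complementary case $p_kr_k\le t^{\alpha_2}$ is then handled by Brun--Titchmarsh at each level, as you describe.
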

\begin{proof}

(a) is similar to part (a) of Lemma \ref{Sums}.
For part (b) we first assume that $p_{k} \le r_{k}$, then
\begin{align*}
\sum_{\substack{p_{k} \in \p_{b_{1}} \\ r_{k} \in \p_{b_{2}}\\ p_{k}\le r_{k} \\ p_{k}r_{k}>t^{\alpha_2} }} & ... \sum_{\substack{p_{2} \in \p_{p_{3}} \\ r_{2} \in \p_{r_{3}}}} \pi(t;p_{2}r_{2},1) \\
& = \sum_{\substack{p_{k} \in \p_{b_{1}} \\ r_{k} \in \p_{b_{2}} \\ p_{k}r_{k}>t^{\alpha_2} }} \dots \sum_{\substack{p_{3} \in \p_{p_{4}} \\ r_{3} \in \p_{r_{4}}}} \#\{ (m_{1},p_{2},r_{2}): p_{2}=1 \pmod{p_{3}}, r_{2}=1 \pmod{r_{3}}, r_{2}p_{2}>t^{\al_{2}}, \\ & \hskip10mm m_{1}r_{2}p_{2}+1 \le t, p_{2}, m_{1}r_{2}p_{2}+1 \text{ prime}  \} \\
& = \sum_{\substack{p_{k} \in \p_{b_{1}} \\ p_{k}\le r_{k}}} \sum_{p_{k-1} \in \p_{k}}  \dots  \sum_{p_{2} \in \p_{3}}\sum_{\substack{r_{k} \in \p_{b_{2}}\\ p_{k}r_{k}>t^{\al_{2}}}} \sum_{r_{k-1} \in \p_{r_{k}}}  \dots \sum_{r_{4} \in \p_{r_{5}}} \#\{ (m_{1},m_{2},r_{3}): r_{3}=1 \pmod{r_{4}}, \\ & \hskip10mm r_{3}p_{2}>t^{\al_{2}},m_{1}p_{2}(m_{2}r_{3}+1)+1 \le t, \{r_{3}, m_{2}r_{3}+1,   m_{1}p_{2}(m_{2}r_{3}+1)+1\}  \text{ prime}  \} \\
&= \sum_{\substack{p_{k} \in \p_{b_{1}} \\ p_{k}\le r_{k}}} \sum_{p_{k-1} \in \p_{k}}  \dots  \sum_{p_{2} \in \p_{3}} \#\{ (m_{1},m_{2},\dots,m_{l-1},r_{l}): r_{l}=1 \pmod{b_{2}}, p_{2}r_{k}>t^{\al_{2}}, \\ & \hskip10mm m_{1}p_{2}(m_{2}\dots (m_{k-2}(m_{k-1}r_{k}+1)+1)+\dots+1 \le t, \{ r_{k}, m_{k-1}r_{k}+1, \\ & \hskip20mm m_{k-2}(m_{k-1}r_{k}+1)+1, \dots, \\ & \hskip30mm m_{1}p_{2}(m_{2}\dots (m_{k-2}(m_{k-1}r_{k}+1)+1)+\dots+1\} \text{ prime}  \} \\
& \le \sum_{m_{1}\dots m_{l-1} \le t^{1-\al_{2}}}\sum_{\substack{p_{k} \in \p_{b_{1}} \\ p_{k}\le r_{k}}} \sum_{p_{k-1} \in \p_{k}}\dots  \sum_{p_{2} \in \p_{3}} \#\{ r_{k} < t/p_{2}m_{1}...m_{k-1}: r_{k}=1 \pmod{b_{2}},  \\ & \hskip10mm \{ r_{k}, m_{k-1}r_{k}+1, m_{k-2}(m_{k-1}r_{k}+1)+1,\dots,\\ & \hskip20mm p_{2}m_{1}(m_{2}\dots (m_{k-2}(m_{k-1}r_{k}+1)+1)+\dots+1\} \text{ prime}  \}
\end{align*}
Just like in Lemma \ref{Sums} we use Brun's Sieve. However, notice that we have almost the same set, except with $m_{1}$ replaced with $m_{1}p_{2}.$ Hence we have

\begin{align*}
\#\{ r_{k} & < t/p_{2}m_{1}\dotsm_{k-1}: r_{k}=1 \pmod{b_{1}}, \{ r_{k}, m_{k-1}r_{k}+1, m_{k-2}(m_{k-1}r_{k}+1)+1,  \\ & \hskip10mm \dots,p_{2}m_{1}(m_{2}\dots (m_{k-2}(m_{k-1}r_{k}+1)+1)+\dots+1\} \text{ prime}  \} \\
& \ll \frac{E^{k-1}}{\phi(E)^{k-1}}\frac{b_{2}^{k-1}}{\phi(b_{2})^{k-1}}\frac{b_{2}c_{1}\dots c_{k-1}}{\phi(b_{2}c_{1}\dots c_{k-1})}\frac{t/p_{2}m_{1}\dots m_{k-1}b_{2}}{(\log t/p_{2}m_{1}\dots m_{k-1}b_{2})^{k}}
\end{align*}
where the $c_{i}$ and $E$ are 
\begin{align*}
E=& p_{2}\bigg(\prod_{i=1}^{l-1}m_{i}^{i(i+1)/2}\bigg)(1+p_{2}m_{1}+p_{2}m_{1}m_{2}+...+p_{2}m_{1}\dots m_{k-3})(1+m_{2}+m_{2}m_{3}+\dots\\ & \hskip10mm +m_{2}\dots m_{k-3}) \dots (1+m_{k-3})(1+p_{2}m_{1}+p_{2}m_{1}m_{2}+\dots+p_{2}m_{1}\dots m_{k-4})\\ & \hskip20mm (1+m_{2}+m_{2}m_{3}+\dots+m_{2}\dots m_{k-4}) \dots (1+m_{k-4})\dots (1+p_{2}m_1)
\end{align*}
and for $2 \le i \le k-1,$
\begin{align*}
& c_{1}=1+ p_{2}m_{1} + p_{2}m_{1}m_{2} + \dots + p_{2}m_{1}\dots m_{k-2},\\ & \hskip10mm c_{i}=1+ m_{i} + m_{i}m_{i+1} + \dots + m_{i}\dots m_{k-2}, c_{k-1}=1.
\end{align*}
By the same methods as Lemma \ref{Sums} and using that $p_{2}/\phi(p_{2})$ is bounded and noting that 
$$\frac{t}{p_{2}m_{1}...m_{k-1}b_{2}} > \frac{r_{k}}{b_{1}} > t^{\alpha_{2}/2-\alpha_{1}}=t^{\epsilon}$$ for some $\epsilon>0$ since $\alpha_{2}>2\alpha_{1}$, we get that

\begin{align*}
\sum_{\substack{p_{k} \in \p_{b_{1}} \\ r_{k} \in \p_{b_{2}} \\ p_{k}r_{k}>t^{\alpha_2} }} \dots \sum_{\substack{p_{2} \in \p_{p_{3}} \\ r_{2} \in \p_{r_{3}}}} \pi(t;p_{2}r_{2},1) & \ll \frac{tb_{2}^{k-1}}{\phi(b_{2})^{k}\log t}\sum_{p_{k} \in \p_{b_{1}}} \sum_{p_{k-1} \in \p_{k}}  \dots  \sum_{p_{2} \in \p_{3}}\frac{1}{p_{2}} \\
& \ll \frac{tb_{2}^{k-1}}{\phi(b_{2})^{k}\log t}\sum_{p_{k} \in \p_{b_{1}}}\frac{(\log \log t)^{k-2}}{p_{k}} \\
& \ll \frac{t(\log \log t)^{k-1}b_{2}^{k-1}}{\phi(b_{1})\phi(b_{2})^{k}\log t}.
\end{align*} The case for $r_{k}\le p_{k}$ is similar.
As for part (c), first note that $b_{i}/\phi(b_{i}) \ll \log\log b_{i}$ for $i \in \{1,2\}.$ taking care of the case where $p_{k}r_{k}>t^{\alpha_{2}}.$ As for $p_{k}r_{k}\le t^{\alpha_{2}}$ we get
\begin{align*}
\sum_{\substack{p_{k} \in \p_{b_{1}} \\ r_{k} \in \p_{b_{2}} \\ p_{k}r_{k}\le t^{\alpha_2} }} \dots \sum_{\substack{p_{2} \in \p_{p_{3}} \\ r_{2} \in \p_{r_{3}}}} \pi(t;p_{2}r_{2},1) & \ll \sum_{\substack{p_{k} \in \p_{b_{1}} \\ r_{k} \in \p_{b_{2}} \\ p_{k}r_{k}\le t^{\alpha_2} }} \dots \sum_{\substack{p_{2} \in \p_{p_{3}} \\ r_{2} \in \p_{r_{3}}}} \frac{t}{\phi(p_{2}r_{2})\log t/p_{2}r_{2}} \\
& \ll \sum_{\substack{p_{k} \in \p_{b_{1}} \\ r_{k} \in \p_{b_{2}} \\ p_{k}r_{k}\le t^{\alpha_2} }} \dots \sum_{\substack{p_{2} \in \p_{p_{3}} \\ r_{2} \in \p_{r_{3}}}} \frac{t}{p_{2}r_{2}\log t} \\
& \ll \sum_{\substack{p_{k} \in \p_{b_{1}} \\ r_{k} \in \p_{b_{2}} \\ p_{k}r_{k}\le t^{\alpha_2} }}\frac{t(\log \log t)^{2k-4}}{p_{k}r_{k}\log t} \\ \\
& \ll \frac{t (\log\log t)^{2k-2}}{\phi(b_{1})\phi(b_{2}) \log t}
\end{align*} using Brun-Titchmarsh, \eqref{BT} finishing part (c). As for part (d) we note that
\begin{align*}
\sum_{\substack{p_{3} \in \p_{p_{4}} \\ r_{3} \in \p_{r_{4}}}}& \sum_{s \in \p_{p_{3}} \cap \p_{r_{3}}} \pi(t;s,1) \\
& = \sum_{\substack{p_{3} \in \p_{p_{4}} \\ r_{3} \in \p_{r_{4}}}}\#\{ (m_{1},s): s=1 \pmod{p_{3}r_{3}}, m_{1}s+1 \le t, s, m_{1}s+1 \text{ prime}  \} \\
& = \sum_{p_{3} \in \p_{p_{4}}} \#\{ (m_{1},m_{2},r_{3}): r_{3}=1 \pmod{r_{4}}, m_{1}(m_{2}p_{3}r_{3}+1)+1 \le t, \\ & \hskip30mm \{m_{2}p_{3}r_{3}+1, m_{1}(m_{2}p_{3}r_{3}+1)+1 \text{ prime}  \} \\
\end{align*} and so on, yielding a similar sieve as part (b). 
\end{proof}

\section{Reduction of $\sum h_k(p)$ to small values of $p_k$}

We will be using Euler Summation on the sum $\sum_{p \leq t}\hk (p)$ in our efforts to find our estimate for $M_1(x).$ It will turn out that the large primes do not contribute much to the some. The sum will involve estimating $\pi(t;p,1)$ by $\li(t)/p-1.$ The following lemma will deal with those errors and will involve the Bombieri--Vinogradov Theorem.

\begin{lemma}\label{BVreduction}
For all $2\le l\le k$, $x>e^{e^{e}}$ and $v>e^{e},$
\begin{align*}\sum_{q \leq y^{k}} \log q & \sum_{a \in \nat} \sum_{\substack{p_{k} \in \p_{q^{a}} \\ p_{k} \leq v^{1/3^{l-1}}}} \sum_{\substack{p_{k-1} \in \p_{p_{k}}\\ p_{k-1} \leq v^{1/3^{l-2}}}}  \dots \sum_{\substack{p_{k-l+2} \in \p_{p_{k-l+3}}\\ p_{k-l+2} \leq v^{1/3}}}\bigg(\pi(v,p_{k-l+2},1)- \frac{\li(v)}{p_{k-l+2}}\bigg) \\
& \ll \frac{v\log y}{\log v}+\li(v)(\log\log v)^{l-2}.\end{align*}
\end{lemma}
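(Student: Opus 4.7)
The strategy is to decompose the discrepancy
$$\pi(v;p,1) - \frac{\li(v)}{p} \;=\; \left(\pi(v;p,1) - \frac{\li(v)}{\phi(p)}\right) + \frac{\li(v)}{p(p-1)}$$
and handle the two resulting contributions $S_{1}$ and $S_{2}$ by different methods: Bombieri--Vinogradov for $S_{1}$, and iterated elementary divisor bounds for $S_{2}$.

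For $S_{1}$, I would pull absolute values inside and switch the order of summation so that $p_{k-l+2}$ becomes the outermost variable. For a fixed $p_{k-l+2}\le v^{1/3}$, the remaining chain $(q,a,p_{k},\ldots,p_{k-l+3})$ carries a total weight
$$W(p_{k-l+2}) \;\le\; \log v \cdot \prod_{j=k-l+3}^{k}\omega(p_{j}-1) \;\ll\; (\log v)^{l-1},$$
using $\sum_{q^{a}\mid p_{k}-1}\log q \le \log p_{k}$ for the prime-power level and the trivial bound $\omega(n)\ll \log n$ at each intermediate level. The Bombieri--Vinogradov theorem, applied at level $Q = v^{1/3}$ (well below the admissible $v^{1/2}/\log^{B} v$), gives
$$\sum_{p\le v^{1/3}} \Bigl|\pi(v;p,1)-\frac{\li(v)}{\phi(p)}\Bigr| \;\ll_{A}\; \frac{v}{\log^{A} v}$$
for every $A>0$. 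Taking $A$ large compared with $l\le k$ absorbs the weight $W$ and leaves $S_{1}\ll v/\log^{2} v \ll v\log y/\log v$, where $\log y\ge 1$ by the standing assumption $x>e^{e^{e}}$.

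For $S_{2}$, I would iterate two elementary estimates. For the innermost prime sum I use
$$\sum_{p\in\p_{b}}\frac{1}{p(p-1)} \;\le\; \sum_{m\ge 1}\frac{1}{(mb+1)\,mb} \;\ll\; \frac{1}{b^{2}},$$
which crucially avoids picking up any $\log\log v$. For each of the $l-2$ subsequent prime sums I use the weaker estimate
$$\sum_{p\in\p_{b}}\frac{1}{p^{2}} \;\le\; \frac{1}{b}\sum_{p\in\p_{b}}\frac{1}{p} \;\ll\; \frac{\log\log v}{b^{2}}$$
coming from \eqref{BT}, so that each of those $l-2$ levels contributes exactly one factor of $\log\log v$. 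The final outer sum is
$$\sum_{q\le y^{k}}\log q\sum_{a} \frac{1}{q^{2a}} \;\ll\; \sum_{q}\frac{\log q}{q^{2}} \;\ll\; 1$$
by \eqref{logq_q2}, and multiplying through by $\li(v)$ yields $S_{2}\ll \li(v)(\log\log v)^{l-2}$, as desired.

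The principal point of care is the chain-multiplicity bound $W(p_{k-l+2})\ll (\log v)^{l-1}$ used in the Bombieri--Vinogradov step: one must confirm that it is genuinely uniform in $p_{k-l+2}$, but this reduces immediately to the elementary facts $\omega(n)\ll \log n$ and $\sum_{q^{a}\mid m}\log q\le \log m$ already used above. Once this is in hand, everything else is bookkeeping that combines Bombieri--Vinogradov with \eqref{BT} and \eqref{logq_q2}.
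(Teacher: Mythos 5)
Your decomposition $\pi(v;p,1)-\tfrac{\li(v)}{p}=\bigl(\pi(v;p,1)-\tfrac{\li(v)}{\phi(p)}\bigr)+\tfrac{\li(v)}{p(p-1)}$ is exactly the paper's split (since $\phi(p)=p-1$), and the two halves are treated in the same way: for $S_1$ the paper likewise pulls absolute values inside, bounds the chain multiplicity by iterating $\Omega(m)\ll\log m$, and invokes Bombieri--Vinogradov on $\sum_{p\le v^{1/3}}|E(v;p,1)|$; for $S_2$ the paper likewise writes $\sum_{p\in\p_b}\tfrac{1}{p(p-1)}\le\sum_{m\ge1}\tfrac{1}{(mb+1)mb}\ll b^{-2}$ to avoid an extra $\log\log v$ at the innermost level, iterates Brun--Titchmarsh over the remaining $l-2$ levels, and finishes with $\sum_q\log q\sum_a q^{-2a}\ll1$. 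The only cosmetic differences are bookkeeping choices (you absorb the $\log(y^k)$ factor into a higher exponent $A$ in BV, giving the sharper $v/\log^2 v$ in place of the paper's $v\log y/\log v$, both of which suffice), so your proposal is correct and essentially identical to the paper's proof.
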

\begin{proof}
Let $E(t;r,1)=\pi(t;r,1)-\frac{\li(t)}{r-1}.$ Then we have
\begin{align*}\sum_{q \leq y^{k}} \log q & \sum_{a \in \nat} \sum_{\substack{p_{k} \in \p_{q^{a}} \\ p_{k} \leq v^{1/3^{l-1}}}} \sum_{\substack{p_{k-1} \in \p_{p_{k}}\\ p_{k-1} \leq v^{1/3^{l-2}}}}  \dots \sum_{\substack{p_{k-l+2} \in \p_{p_{k-l+3}}\\ p_{k-l+2} \leq v^{1/3}}}\bigg(\pi(v,p_{k-l+2},1)- \frac{\li(v)}{p_{k-l+2}-1}\bigg)\\
& = \sum_{q \leq y^{k}} \log q \sum_{a \in \nat} \sum_{\substack{p_{k} \in \p_{q^{a}} \\ p_{k} \leq v^{1/3^{l-1}}}} \sum_{\substack{p_{k-1} \in \p_{p_{k}}\\ p_{k-1} \leq v^{1/3^{l-2}}}}  \dots \sum_{\substack{p_{k-l+2} \in \p_{p_{k-l+3}}\\ p_{k-l+2} \leq v^{1/3}}} E(v; p_{k-l+2},1) \\
& \ll \sum_{q \leq y^{k}} \log q \sum_{a \in \nat} \sum_{\substack{p_{k} \in \p_{q^{a}} \\ p_{k} \leq v^{1/3^{l-1}}}} \sum_{\substack{p_{k-1} \in \p_{p_{k}}\\ p_{k-1} \leq v^{1/3^{l-2}}}}  \dots \sum_{\substack{p_{k-l+2} \in \p_{p_{k-l+3}}\\ p_{k-l+2} \leq v^{1/3}}} \abs{E(v; p_{k-l+2},1)}.\end{align*}
Let $\Omega(m)$ denote the number of divisors of $m$ which are primes or prime powers. We use the estimate $\Omega(m) \ll \log m$ to get

\begin{align*}\sum_{q \leq y^{k}}& \log q \sum_{a \in \nat} \sum_{\substack{p_{k} \in \p_{q^{a}} \\ p_{k} \leq v^{1/3^{l-1}}}} \sum_{\substack{p_{k-1} \in \p_{p_{k}}\\ p_{k-1} \leq v^{1/3^{l-2}}}}  \dots \sum_{\substack{p_{k-l+2} \in \p_{p_{k-l+3}}\\ p_{k-l+2} \leq v^{1/3}}} \abs{E(v; p_{k-l+2},1)} \\
& \leq \log(y^{k})\sum_{\substack{p_{k-l+2} \in \p_{p_{k-l+3}}\\ p_{k-l+2} \leq v^{1/3}}} \abs{E(v; p_{k-l+2},1)} \sum_{\substack{p_{k-l+3} \mid p_{k-l+2}-1 \\ p_{3} \leq v^{1/9}}} \sum_{\substack{p_{k-l+4} \mid p_{k-l+3}-1 \\ p_{k-l+4} \leq v^{1/27}}} \dots \sum_{q \leq y^{k}} \sum_{\substack{a \in \nat \\ q^{a} \mid p_{k}-1}}1  \\
& \leq \log(y^{k})\sum_{\substack{p_{k-l+2} \in \p_{p_{k-l+3}}\\ p_{k-l+2} \leq v^{1/3}}} \abs{E(v; p_{k-l+2},1)} \sum_{\substack{p_{k-l+3} \mid p_{k-l+2}-1 \\ p_{3} \leq v^{1/9}}} \sum_{\substack{p_{k-l+4} \mid p_{k-l+3}-1 \\ p_{k-l+4} \leq v^{1/27}}} \dots \sum_{\substack{p_{k}\le v^{1/3^{k-1}} \\ p_{k} \mid p_{k-1}-1}}\Omega(p_{k}-1)  \\
& \ll \log y\sum_{\substack{p_{k-l+2} \in \p_{p_{k-l+3}}\\ p_{k-l+2} \leq v^{1/3}}} \abs{E(v; p_{k-l+2},1)} \sum_{\substack{p_{k-l+3} \mid p_{k-l+2}-1 \\ p_{3} \leq v^{1/9}}} \sum_{\substack{p_{k-l+4} \mid p_{k-l+3}-1 \\ p_{k-l+4} \leq v^{1/27}}} \dots \sum_{\substack{p_{k}\le v^{1/3^{k-1}} \\ p_{k} \mid p_{k-1}-1}}\log t.\end{align*} Continuing in this manner we obtain
\begin{align*}
\sum_{q \leq y^{k}} \log q \sum_{a \in \nat}& \sum_{\substack{p_{k} \in \p_{q^{a}} \\ p_{k} \leq v^{1/3^{l-1}}}} \sum_{\substack{p_{k-1} \in \p_{p_{k}}\\ p_{k-1} \leq v^{1/3^{l-2}}}}  \dots \sum_{\substack{p_{k-l+2} \in \p_{p_{k-l+3}}\\ p_{k-l+2} \leq v^{1/3}}} \abs{E(v; p_{k-l+2},1)}\\ & \ll \log y (\log v)^{l-1} \sum_{\substack{p_{k-l+2} \in \p_{p_{k-l+3}}\\ p_{k-l+2} \leq v^{1/3}}} \abs{E(v; p_{k-l+2},1)} \ll \frac{v \log y}{\log t}\end{align*} using Bombieri--Vinogradov. As for the difference between 
$$\sum_{q \leq y^{k}} \log q \sum_{a \in \nat} \sum_{\substack{p_{k} \in \p_{q^{a}} \\ p_{k} \leq v^{1/3^{l-1}}}} \sum_{\substack{p_{k-1} \in \p_{p_{k}}\\ p_{k-1} \leq v^{1/3^{l-2}}}}  \dots \sum_{\substack{p_{k-l+2} \in \p_{p_{k-l+3}}\\ p_{k-l+2} \leq v^{1/3}}}\frac{\li(v)}{p_{k-l+2}-1}$$ and
\begin{equation}\label{Miles}\sum_{q \leq y^{k}} \log q \sum_{a \in \nat} \sum_{\substack{p_{k} \in \p_{q^{a}} \\ p_{k} \leq v^{1/3^{l-1}}}} \sum_{\substack{p_{k-1} \in \p_{p_{k}}\\ p_{k-1} \leq v^{1/3^{l-2}}}}  \dots \sum_{\substack{p_{k-l+2} \in \p_{p_{k-l+3}}\\ p_{k-l+2} \leq v^{1/3}}}\frac{\li(v)}{p_{k-l+2}}\end{equation} we get that it is
\begin{align*}\sum_{q \leq y^{k}} \log q & \sum_{a \in \nat} \sum_{\substack{p_{k} \in \p_{q^{a}} \\ p_{k} \leq v^{1/3^{l-1}}}} \sum_{\substack{p_{k-1} \in \p_{p_{k}}\\ p_{k-1} \leq v^{1/3^{l-2}}}}  \dots \sum_{\substack{p_{k-l+2} \in \p_{p_{k-l+3}}\\ p_{k-l+2} \leq v^{1/3}}} \frac{\li(v)}{p_{k-l+2}(p_{k-l+2}-1)} \\
& \leq \sum_{q \leq y^{k}} \log q \sum_{a \in \nat} \sum_{\substack{p_{k} \in \p_{q^{a}} \\ p_{k} \leq v^{1/3^{l-1}}}} \sum_{\substack{p_{k-1} \in \p_{p_{k}}\\ p_{k-1} \leq v^{1/3^{l-2}}}}  \dots  \sum_{i=1}^{\infty} \frac{\li(v)}{(ip_{k-l+3}+1)(ip_{k-l+3})} \\
&\ll \sum_{q \leq y^{k}} \log q \sum_{a \in \nat} \sum_{\substack{p_{k} \in \p_{q^{a}} \\ p_{k} \leq v^{1/3^{l-1}}}} \sum_{\substack{p_{k-1} \in \p_{p_{k}}\\ p_{k-1} \leq v^{1/3^{l-2}}}}  \dots \sum_{\substack{p_{k-l+3} \in \p_{p_{k-l+4}}\\ p_{k-l+3} \leq v^{1/9}}} \frac{\li(v)}{p_{k-l+3}^{2}} \\
&\ll \sum_{q \leq y^{k}} \log q \sum_{a \in \nat} \sum_{\substack{p_{k} \in \p_{q^{a}} \\ p_{k} \leq v^{1/3^{l-1}}}} \sum_{\substack{p_{k-1} \in \p_{p_{k}}\\ p_{k-1} \leq v^{1/3^{l-2}}}}  \dots \sum_{\substack{p_{k-l+3} \in \p_{p_{k-l+4}}\\ p_{k-l+3} \leq v^{1/9}}} \frac{\li(v)}{p_{k-l+3}q^a} \\
& \ll \sum_{q \leq y^{k}} \log q \sum_{a \in \nat} \frac{\li(v)(\log\log v)^{l-2}}{q^{2a}} \\
& \ll \sum_{q \leq y^{k}}\frac{\li(v)(\log\log v)^{l-2} \log q}{q^{2}} \\ 
& \ll \li(v)(\log\log v)^{l-2}\end{align*} using the Brun--Titchmarsh inequality \eqref{BT}, the inequality $p_{k-l+3} \ge q^a$ and noting that the sum over $q$ converges.
\end{proof}

\begin{lemma}\label{hk1} For all $x>e^{e^{e}}$ and $t>e^{e}$,
\begin{align*}\sum_{p \leq t}\hk (p) = \sum_{q \leq y^{k}} \log q & \sum_{a \in \nat} \sum_{\substack{p_{k} \in \p_{q^{a}} \\ p_{k} \leq t^{1/3^{k-1}}}} \sum_{\substack{p_{k-1} \in \p_{p_{k}}\\ p_{k-1} \leq t^{1/3^{k-2}}}}  \dots \sum_{\substack{p_{2} \in \p_{p_{3}}\\ p_{2} \leq t^{1/3}}} \pi(t;p_2,1) \\ & + O\bigg(t^{1-1/3^{k}} \log t (\log \log t)^{k-2} y^{k}+\frac{t (\log \log t)^{k-2}\log y}{\log t} \bigg).\end{align*}
\end{lemma}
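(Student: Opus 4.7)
The plan is to (i) rewrite $\sum_{p\le t}h_k(p)$ by swapping the order of summation so that it takes the same nested form as the main term of the lemma, (ii) identify the error as the contribution of chains $(p_k,\ldots,p_2)$ in which at least one $p_j$ exceeds its truncation bound $t^{1/3^{j-1}}$, and (iii) partition this error set by the maximum violating index $J$ and bound each piece using the summation estimates of Section~\ref{MoreSums}.

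Using $\nu_q(p_k-1)=\#\{a\ge 1:q^a\mid p_k-1\}$ together with $\#\{p\le t:p_2\mid p-1\}=\pi(t;p_2,1)$, I obtain
\begin{equation*}
\sum_{p\le t} h_k(p) = \sum_{q\le y^k}\log q \sum_{a\in\nat} \sum_{p_k\in\p_{q^a}} \sum_{p_{k-1}\in\p_{p_k}}\cdots\sum_{p_2\in\p_{p_3}}\pi(t;p_2,1).
\end{equation*}
The main term of the lemma is the same sum with the truncations $p_j\le t^{1/3^{j-1}}$ imposed on every level; the error is therefore the sum over chains violating at least one bound. For each such chain set $J=\max\{j:p_j>t^{1/3^{j-1}}\}$; this partitions the error into disjoint contributions $\Sigma_J$ ($J\in\{2,\ldots,k\}$) where $p_J>t^{1/3^{J-1}}$, $p_i\le t^{1/3^{i-1}}$ for $i>J$, and $p_i$ for $i<J$ is unrestricted.

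For $J\le k-1$, I apply Lemma~\ref{Sums}(b) to the inner $J$ levels $p_J,\ldots,p_2$ with $l=J$, $b=p_{J+1}$, $\al_1=1/3^J$, $\al_2=1/3^{J-1}$, obtaining an inner bound $\ll t/(p_{J+1}\log t)$. The remaining outer sums over $p_{J+1}\le t^{1/3^J},\ldots,p_k\le t^{1/3^{k-1}}$ are handled by iterated Brun--Titchmarsh \eqref{BT}, contributing $(\log\log t)^{k-J}$, and the final sums over $a$ and $q\le y^k$ via \eqref{geometric} and \eqref{Cheb} contribute $\log y$. This gives $\Sigma_J\ll t(\log\log t)^{k-J}\log y/\log t$, whose maximum at $J=2$ reproduces the second error term $t(\log\log t)^{k-2}\log y/\log t$.

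The main obstacle is $J=k$, because no outer prime $p_{k+1}$ is available to play the role of a small $b$ in Lemma~\ref{Sums}(b); one must split on the size of $q^a$. If $q^a\le t^{1/3^k}$, Lemma~\ref{Sums}(b) with $l=k$, $b=q^a$, $\al_1=1/3^k$, $\al_2=1/3^{k-1}$ applies to the full chain and summing over $q$ and $a$ using \eqref{geometric} and \eqref{Cheb} yields $\ll t\log y/\log t$, absorbed into the second error term. If $q^a>t^{1/3^k}$, I bound the inner chain $p_{k-1},\ldots,p_2$ by starting from $\pi(t;p_2,1)\le t/p_2$ and applying Brun--Titchmarsh $k-2$ times to get $\ll t(\log\log t)^{k-2}/p_k$; summing over $p_k>t^{1/3^{k-1}}$ in $\p_{q^a}$ via \eqref{Recip2} contributes an extra factor $\log t/q^a$; and finally the tail estimate \eqref{geometric} gives $\sum_{a:q^a>t^{1/3^k}}1/q^a\ll t^{-1/3^k}$ while \eqref{Cheb} gives $\sum_{q\le y^k}\log q\ll y^k$. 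Multiplying these pieces produces exactly the first error term $O(t^{1-1/3^k}\log t(\log\log t)^{k-2}y^k)$, which together with the bounds on the $\Sigma_J$ for $J<k$ yields the stated estimate.
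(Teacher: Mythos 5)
Your proof is correct and takes essentially the same route as the paper: after interchanging the sums, both partition the error by the largest index $J$ at which the truncation $p_J\le t^{1/3^{J-1}}$ fails and invoke Lemma~\ref{Sums}(b) with $b$ the quantity sitting just above level $J$ (the prime $p_{J+1}$, or the prime power $q^a$ when $J=k$), then finish with Brun--Titchmarsh and a Mertens sum over $q$. Two small points to tidy: the $\log y$ factors come from Mertens \eqref{Merten}, i.e. $\sum_{q\le y^k}(\log q)/q\ll\log y$, not from the Chebyshev bound \eqref{Cheb}; and in the $J=k$, $q^a>t^{1/3^k}$ subcase, \eqref{Recip2} actually produces $\ll t^{-1/3^{k-1}}+(\log t)/q^a$, so you need to record that the $t^{-1/3^{k-1}}$ piece, after multiplying by the $O(\log t/\log q)$ admissible exponents $a$ and summing $\log q$ over $q\le y^k$, gives $\ll t^{1-1/3^{k-1}}\log t(\log\log t)^{k-2}y^k$, which is dominated because $1/3^{k-1}>1/3^k$. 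Alternatively, the paper handles that subcase in one stroke by applying Lemma~\ref{Sums}(a) with $b=q^a$ (giving $\ll t\log t(\log\log t)^{k-2}/q^a$ directly and avoiding \eqref{Recip2} entirely); your hand estimate is essentially a reproof of that special case.
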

\begin{proof}
For a prime $p$, 
\begin{align*}\hk (p) &= \sum_{p_{1}\mid p} \sum_{p_{2}\mid p_{1}-1} \dots \sum_{p_{k}\mid p_{k-1}-1} \sum_{q \leq y^{k}} \nu_{q}(p_{k}-1)\log q \\
& =\sum_{p_2 \mid p-1} \dots \sum_{p_{k}\mid p_{k-1}-1} \sum_{q \leq y^{k}} \nu_{q}(p_{k}-1)\log q\end{align*} since the only prime which can divide $p$ is $p$ itself. Hence

\begin{align*}\sum_{p \leq t}\hk (p) &= \sum_{p \leq t} \sum_{p_{2}\mid p-1} \dots \sum_{p_{k}\mid p_{k-1}-1} \sum_{q \leq y^{k}} \nu_{q}(p_{k}-1)\log q \\
& =  \sum_{p \leq t} \sum_{p_{2}\mid p_{1}-1} \dots \sum_{p_{k}\mid p_{k-1}-1} \sum_{q \leq y^{k}} \sum_{\substack{p_{k} \in \p_{q^{a}} \\ a \in \nat}} \log q \\
& = \sum_{q \leq y^{k}} \log q \sum_{a \in \nat} \sum_{p_{k} \in \p_{q^{a}}} \sum_{p_{k-1} \in \p_{p_{k}}}  \dots  \sum_{p_2 \in \p_{p_{3}}} \sum_{\substack{p \leq t\\ p \in \p_{p_{2}}}} 1 \\
& = \sum_{q \leq y^{k}} \log q \sum_{a \in \nat} \sum_{p_{k} \in \p_{q^{a}}} \sum_{p_{k-1} \in \p_{p_{k}}}  \dots  \sum_{p_2 \in \p_{p_{3}}} \pi(t;p_{2},1).\end{align*}

We wish to approximate $\pi(t;p_{2},1)$ by $\frac{\li(t)}{p_{2}-1}$ and use the Bombieri-Vinogradov Theorem to deal with the error. However this approximation only allows primes up to say $t^{1/3}.$ So we use the estimations in Lemma \ref{Sums} to bound these errors. We will see that the main contribution comes from $p_{i} \leq t^{1/3^{i-1}}$ and $q^{a} \leq t^{1/3^{k}}.$

Using Lemma \ref{Sums}, we get for large $q^{a}$

$$\sum_{q \leq y^{k}} \log q \sum_{\substack{a \in \nat \\ q^{a} > t^{1/3^{k}}}} \sum_{p_{k} \in \p_{q^{a}}} \sum_{p_{k-1} \in \p_{p_{k}}}  \dots  \sum_{p_{3} \in \p_{p_{2}}} \pi(t;p_{2},1)
\ll \sum_{q \leq y^{k}} \log q \sum_{\substack{a \in \nat \\ q^{a} > t^{1/3^{k}}}} \frac{t \log t (\log \log t)^{k-2} }{q^{a}}.$$
By geometric estimates, if $a^{*}$ is the smallest $a$ where  $q^{a} > t^{1/3^{k}}$, then we get that the above is

\begin{align*}&\ll t \log t (\log \log t)^{k-2} \sum_{q \leq y^{k}} \frac{\log q }{q^{a^{*}}} \\
& \leq  t^{1-1/3^{k}} \log t (\log \log t)^{k-2}  \sum_{q \leq y^{k}} \log q \\
& \ll t^{1-1/3^{k}} \log t (\log \log t)^{k-2} y^{k}.\end{align*} Now suppose $q^{a} \leq t^{1/3^{k}}.$ Let $l$ be the last index (supposing one exists) where $p_{i} > t^{1/3^{i-1}}$ By using \eqref{Sum2} where $l$ ranges from $2$ to $k,$ we can bound the large values of the $p_i$.

\begin{align*}\sum_{q \leq y^{k}} \log q & \sum_{\substack{a \in \nat \\ q^{a} \leq t^{1/3^{k}}}} \sum_{\substack{p_{k} \in \p_{q^{a}} \\ p^{k} \leq t^{1/3^{k-1}}}}\dots \sum_{\substack{p_{l+1} \in \p_{p_{l+2}} \\ p_{l+1} \leq t^{1/3^{l}}}} \sum_{\substack{p_{l} \in \p_{p_{l+1}} \\ p_{l} > t^{1/3^{l-1}}}} \sum_{p_{l-1} \in \p_{p_{l}}}  \dots  \sum_{p_{2} \in \p_{p_{3}}} \pi(t;p_{2},1) \\
& \ll \sum_{q \leq y^{k}} \log q \sum_{\substack{a \in \nat \\ q^{a} \leq t^{1/3^{k}}}} \sum_{\substack{p_{k} \in \p_{q^{a}} \\ p_{k} \leq t^{1/3^{k-1}}}}\dots\sum_{\substack{p_{l+2} \in \p_{p_{l+3}} \\ p_{l+2} \leq t^{1/3^{l+1}}}} \sum_{\substack{p_{l+1} \in \p_{p_{l+2}} \\ p_{l+1} > t^{1/3^{l}}}} \frac{(p_l)^{l-1}t }{\phi(p_{l})^{l} \log t} \\ & \ll \sum_{q \leq y^{k}} \log q \sum_{\substack{a \in \nat \\ q^{a} \leq t^{1/3^{k}}}} \sum_{\substack{p_{k} \in \p_{q^{a}} \\ p_{k} \leq t^{1/3^{k-1}}}}\dots \sum_{\substack{p_{l+2} \in \p_{p_{l+3}} \\ p_{l+2} \leq t^{1/3^{l+1}}}} \sum_{\substack{p_{l+1} \in \p_{p_{l+2}} \\ p_{l+1} > t^{1/3^{l}}}} \frac{t }{p_{l+1} \log t}\end{align*} since $p_l$ is prime and $l\le k$. By Brun-Titchmarsh \eqref{BT} we get 
\begin{align*}& \ll \sum_{q \leq y^{k}} \log q \sum_{\substack{a \in \nat \\ q^{a} \leq t^{1/3^{k}}}}  \frac{t (\log \log t)^{k-l}}{q^{a} \log t} \\
& \ll \sum\limits_{q \leq y^{k}} \frac{t (\log \log t)^{k-l}\log q}{q \log t} \\
& \ll \frac{t (\log \log t)^{k-2}\log y}{\log t}\end{align*} by \eqref{Merten} and since $l\ge 2$. Hence we get
\begin{align*}
\sum_{p \leq t}\hk (p) = \sum_{q \leq y^{k}} \log q & \sum_{\substack{a \in \nat \\ q^{a} \leq t^{1/3^{k}}}} \sum_{\substack{p_{k} \in \p_{q^{a}} \\ p_{k} \leq t^{1/3^{k-1}}}} \sum_{\substack{p_{k-1} \in \p_{p_{k}}\\ p_{k-1} \leq t^{1/3^{k-2}}}}  \dots  \sum_{\substack{p_{2} \in \p_{p_{3}}\\ p_{2} \leq t^{1/3}}} \pi(t,p_{2},1) \\
& + O\bigg(t^{1-1/3^{k}} \log t (\log \log t)^{k-2} y^{k}+\frac{t (\log \log t)^{k-2}\log y}{\log t} \bigg)\end{align*} finishing the lemma.
\end{proof}

\section{Evaluation of the Main Term}\label{EvalMainTerm}
Now we'll deal with the main term from Lemma \ref{hk1}. We will deal with estimating the individual sums recursively. Hence we wish to make the following definition.

\begin{definition}Let $2 \le l\le k$ and $2\le u \le t$. Then define
$$g_{k,l}(u)=\sum_{q \leq y^{k}} \log q \sum_{a \in \nat} \sum_{\substack{p_{k} \in \p_{q^{a}} \\ p_{k} \leq u^{1/3^{l-1}}}} \sum_{\substack{p_{k-1} \in \p_{p_{k}}\\ p_{k-1} \leq u^{1/3^{l-2}}}}  \dots  \sum_{\substack{p_{k-l+2} \in \p_{p_{k-l+3}}\\ p_{k-l+2} \leq u^{1/3}}} \pi(u;p_{k-l+2},1).$$
\end{definition}
Note that $g_{k,k}(t)$ is the summation in Lemma \ref{hk1}. Next we'll exhibit the recursive formula satisfied by the $g_{k,l}$.

\begin{lemma}\label{Recursion}
Let $3\le l\le k$, then
\begin{equation}
g_{k,l}(v)=\li(v)\int_{2}^{v^{1/3}} \frac{1}{u^{2}} g_{k,l-1}(u)du + O\bigg(\frac{v (\log \log v)^{l-2}\log y}{\log v} \bigg).
\end{equation}
\end{lemma}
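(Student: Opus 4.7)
The plan is to apply Lemma \ref{BVreduction} to replace $\pi(v;p_{k-l+2},1)$ by $\li(v)/p_{k-l+2}$ in the innermost summation, convert the resulting sum over $1/p_{k-l+2}$ into an integral via Abel summation, and then recognise the integrand as $g_{k,l-1}(u)/u^{2}$ up to a controlled error.

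First, I invoke Lemma \ref{BVreduction} to replace the innermost factor $\pi(v;p_{k-l+2},1)$ in $g_{k,l}(v)$ by $\li(v)/p_{k-l+2}$. The resulting error is $O\bigl(v\log y/\log v+\li(v)(\log\log v)^{l-2}\bigr)$, which is absorbed by the claimed error $O\bigl(v(\log\log v)^{l-2}\log y/\log v\bigr)$ since $\li(v)\ll v/\log v$.

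Next, I apply Abel summation to the innermost sum, obtaining
\[
\sum_{\substack{p\in\p_{p_{k-l+3}}\\ p\le v^{1/3}}} \frac{1}{p}=\frac{\pi(v^{1/3};p_{k-l+3},1)}{v^{1/3}}+\int_{2}^{v^{1/3}}\frac{\pi(u;p_{k-l+3},1)}{u^{2}}\,du,
\]
and then interchange the remaining (finite) outer summations with the integral. This produces a main integral $\li(v)\int_{2}^{v^{1/3}}\widetilde g(u)\,du/u^{2}$ plus a boundary contribution $\li(v)\widetilde g(v^{1/3})/v^{1/3}$, where $\widetilde g(u)$ denotes the same nested summation as $g_{k,l-1}(u)$ except with the upper bounds inherited from $g_{k,l}(v)$, namely $p_{k-j}\le v^{1/3^{l-1-j}}$ rather than $p_{k-j}\le u^{1/3^{l-2-j}}$.

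Third, I compare $\widetilde g(u)$ with $g_{k,l-1}(u)$. The two sets of cutoffs coincide at $u=v^{1/3}$, and for smaller $u$ the cutoffs of $\widetilde g$ are strictly looser, so $\widetilde g(u)-g_{k,l-1}(u)\ge 0$ consists of chains in which some $p_{k-j}$ lies in the window $u^{1/3^{l-2-j}}<p_{k-j}\le v^{1/3^{l-1-j}}$. I would bound this excess in the same manner as the large-prime cases of Lemma \ref{hk1}: for the first offending index estimate $\pi(u;p_{k-l+3},1)$ via Lemma \ref{Sums}(b), then peel off the remaining prime sums using \eqref{BT} and \eqref{Merten}. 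After multiplication by $\li(v)$ and integration against $du/u^{2}$ over $[2,v^{1/3}]$, the total contribution is $O\bigl(v(\log\log v)^{l-2}\log y/\log v\bigr)$. The boundary contribution is handled analogously: a Brun--Titchmarsh cascade yields $\widetilde g(v^{1/3})\ll v^{1/3}(\log\log v)^{l-2}\log y$, and multiplying by $\li(v)/v^{1/3}$ lands safely inside the target error.

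The main obstacle is the bookkeeping in the third step: reconciling the two upper-bound systems (in terms of $v$ versus $u$) and verifying that the discrepancy integrates to precisely the claimed error. The estimates assembled in Section \ref{MoreSums}, especially Lemma \ref{Sums}, were tailored to supply exactly these tail bounds, so the calculation is essentially routine but requires careful tracking of where each factor of $\log\log v$ and $\log y$ originates.
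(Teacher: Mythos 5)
Your proposal reproduces the paper's argument: invoke Lemma \ref{BVreduction} to replace $\pi(v;p_{k-l+2},1)$ by $\li(v)/p_{k-l+2}$, apply Abel summation to convert the inner reciprocal sum into a boundary term plus an integral of $\pi(u;p_{k-l+3},1)/u^2$, and bound the boundary term by a Brun--Titchmarsh cascade. So far this is exactly the paper's route, and the error bookkeeping you perform agrees with the paper.

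Where you go beyond the paper is in your third step. After interchanging the (finite) outer sums with the integral, the integrand is a nested sum whose cutoffs $p_{k-j}\le v^{1/3^{l-1-j}}$ are inherited from $g_{k,l}(v)$, whereas $g_{k,l-1}(u)$ by its definition carries the cutoffs $p_{k-j}\le u^{1/3^{l-2-j}}$; these coincide only at the endpoint $u=v^{1/3}$ and are strictly looser in the integrand for $u<v^{1/3}$. The paper's proof silently identifies the two expressions and does not mention this discrepancy. You correctly flag it, denote the looser-cutoff sum by $\widetilde g(u)$, and propose bounding $\widetilde g(u)-g_{k,l-1}(u)$ by locating the first index $j$ with $p_{k-j}$ in the window $\bigl(u^{1/3^{l-2-j}},\,v^{1/3^{l-1-j}}\bigr]$ and then applying Lemma \ref{Sums}(b) to the tail together with Brun--Titchmarsh and \eqref{Merten} to the outer layers. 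This is the right mechanism (it is the same one used for the large-prime tails in Lemma \ref{hk1}), and a rough check in the simplest case $l=3$, where the discrepancy reduces to $\sum_{q}\log q\sum_a\sum_{u^{1/3}<p_k<u,\,p_k\in\p_{q^a}}\pi(u;p_k,1)\ll u\log y/\log u$, shows that after integrating against $du/u^2$ and multiplying by $\li(v)$ one lands exactly in $O(v(\log\log v)^{l-2}\log y/\log v)$. You do not carry the general-$l$ bookkeeping out in full, so your proof is a sketch at that point, but the plan is sound and, if anything, more careful than the published argument.
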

\begin{proof}
We'll proceed by approximating $\pi$ by $\li$ and then use partial summation to recover $\pi.$ Using Lemma \ref{BVreduction} we get
\begin{align*}
g_{k,l}(v)&= \sum_{q \leq y^{k}} \log q \sum_{a \in \nat} \sum_{\substack{p_{k} \in \p_{q^{a}} \\ p_{k} \leq v^{1/3^{l-1}}}} \sum_{\substack{p_{k-1} \in \p_{p_{k}}\\ p_{k-1} \leq v^{1/3^{l-2}}}}  \dots  \sum_{\substack{p_{k-l+2} \in \p_{p_{k-l+3}}\\ p_{k-l+2} \leq v^{1/3}}} \pi(v;p_{k-l+2},1) \\ & \ll \sum_{q \leq y^{k}} \log q \sum_{a \in \nat} \sum_{\substack{p_{k} \in \p_{q^{a}} \\ p_{k} \leq v^{1/3^{l-1}}}} \sum_{\substack{p_{k-1} \in \p_{p_{k}}\\ p_{k-1} \leq v^{1/3^{l-2}}}}  \dots  \sum_{\substack{p_{k-l+2} \in \p_{p_{k-l+3}}\\ p_{k-l+2} \leq v^{1/3}}} \frac{\li(v)}{p_{k-l+2}}+O\bigg(\frac{v\log y}{\log v}+ \li(v)(\log\log v)^{l-2} \bigg).
\end{align*}
We use Euler summation on the inner sum to get
$$\sum_{\substack{p_{k-l+2} \in \p_{p_{k-l+3}}\\ p_{k-l+2} \leq v^{1/3}}} \frac{1}{p_{k-l+2}} = \frac{\pi(v^{1/3};p_{k-l+3},1)}{v^{1/3}}+\int_{2}^{v^{1/3}} \frac{\pi(u;p_{k-l+3},1)}{u^{2}}du $$ and so we get that 
\begin{align*}
g_{k,l}(v)&=\li(v)\sum_{q \leq y^{k}} \log q \sum_{a \in \nat} \sum_{\substack{p_{k} \in \p_{q^{a}} \\ p_{k} \leq v^{1/3^{l-1}}}} \sum_{\substack{p_{k-1} \in \p_{p_{k}}\\ p_{k-1} \leq v^{1/3^{l-2}}}}  \dots  \sum_{\substack{p_{k-l+3} \in \p_{p_{k-l+4}}\\ p_{k-l+3} \leq v^{1/3}}}\bigg(\frac{\pi(v^{1/3};p_{k-l+3},1)}{v^{1/3}}\\ & \hskip25mm+\int_{2}^{v^{1/3}} \frac{\pi(u;p_{k-l+3},1)}{u^{2}}du \bigg)+O\bigg(\frac{v\log y}{\log v}+\li(v)(\log\log v)^{l-2} \bigg).
\end{align*}
Inside the sum by trivially estimating $\pi(x;q,1)$ by $x/q$ inside the sum and using Brun--Titchmarsh \eqref{BT} we get
\begin{align*}
\sum_{q \leq y^{k}}& \log q \sum_{a \in \nat} \sum_{\substack{p_{k} \in \p_{q^{a}} \\ p_{k} \leq v^{1/3^{l-1}}}} \sum_{\substack{p_{k-1} \in \p_{p_{k}}\\ p_{k-1} \leq v^{1/3^{l-2}}}}  \dots  \sum_{\substack{p_{k-l+3} \in \p_{p_{k-l+4}}\\ p_{k-l+3} \leq v^{1/3}}}\frac{\pi(v^{1/3};p_{k-l+3},1)}{v^{1/3}} \\
& \ll \sum_{q \leq y^{k}} \log q \sum_{a \in \nat} \sum_{\substack{p_{k} \in \p_{q^{a}} \\ p_{k} \leq v^{1/3^{l-1}}}} \sum_{\substack{p_{k-1} \in \p_{p_{k}}\\ p_{k-1} \leq v^{1/3^{l-2}}}}  \dots  \sum_{\substack{p_{k-l+3} \in \p_{p_{k-l+4}}\\ p_{k-l+3} \leq v^{1/3}}}\frac{1}{p_{k-l+3}} \\
& \ll \sum_{q \leq y^{k}} \log q \sum_{a \in \nat} \frac{(\log\log v)^{l-2}}{q^a}\\
& \ll \sum_{q \leq y^{k}} \log q \frac{(\log\log v)^{l-2}}{q}\\
& \ll (\log\log v)^{l-2}\log y.
\end{align*} Multiplying through by $\li(v)$ finishes the lemma.
\end{proof}

We now require a lemma to find the asymptotic formula for $h_k$ using the previous recurrence relation
\begin{lemma}\label{hk}
Let $2\le l\le k$.
$$g_{k,l}(u)=\frac{ku(\log\log u)^{l-1}\log y}{(l-1)!\log u}+O\bigg(\frac{u(\log\log u)^{l-1}}{\log u}+\frac{u(\log\log u)^{l-2}\log ^2y}{\log u}\bigg)$$
which implies
\begin{align*}
\sum_{p \leq t}\hk (p) & = \frac{kt(\log\log t)^{k-1}\log y}{(k-1)!\log t}+O\bigg(\frac{t(\log\log t)^{k-1}}{\log t}\\ & \hskip10mm+\frac{t(\log\log t)^{k-2}\log ^2y}{\log t}+t^{1-1/3^k}\log t(\log\log t)^{k-2}y^k\bigg).
\end{align*}
\end{lemma}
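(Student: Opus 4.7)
I would prove the lemma by induction on $l$, with the second statement following by combining the case $l=k$ with Lemma~\ref{hk1}.

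\textbf{Base case $l=2$.} Here the recursion in Lemma~\ref{Recursion} does not apply, so $g_{k,2}(u)$ must be handled directly. First invoke Lemma~\ref{BVreduction} (with $l=2$, $v=u$) to replace $\pi(u;p_k,1)$ by $\li(u)/(p_k-1)$ at the cost of an error of size $O(u\log y/\log u + \li(u))$. What remains is
$$\li(u)\sum_{q\le y^k}\log q\sum_{a\in\nat}\sum_{\substack{p_k\in\p_{q^a}\\ p_k\le u^{1/3}}}\frac{1}{p_k}.$$
The inner sum equals $\log\log u/\phi(q^a)+O(\log(q^a)/\phi(q^a))$ by the asymptotic preceding~\eqref{BT3}. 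Summing $1/\phi(q^a)=1/(q^{a-1}(q-1))$ geometrically over $a$ gives $q/(q-1)^2$, and then $\sum_{q\le y^k}\log q\cdot q/(q-1)^2 = k\log y+O(1)$ by Mertens~\eqref{Merten}. This produces the main term $k\li(u)\log\log u \log y$, which is the claimed $l=2$ formula after using $\li(u)\sim u/\log u$. The secondary $O(\log(q^a)/\phi(q^a))$ piece, when summed geometrically over $a$, is $O(\log q/q)$; then $\sum_{q\le y^k}(\log q)^2/q\ll \log^2 y$, producing an error of size $O(u\log^2 y/\log u)$ as required.

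\textbf{Inductive step.} Substituting the inductive hypothesis for $g_{k,l-1}$ into Lemma~\ref{Recursion}, the main contribution is
$$\li(v)\cdot\frac{k\log y}{(l-2)!}\int_2^{v^{1/3}}\frac{(\log\log u)^{l-2}}{u\log u}\,du.$$
The substitution $w=\log\log u$ turns the integrand into $w^{l-2}\,dw$, integrating to $(\log\log v^{1/3})^{l-1}/(l-1)=(\log\log v)^{l-1}/(l-1)+O((\log\log v)^{l-2})$. Multiplication by $\li(v)\sim v/\log v$ yields the main term $kv(\log\log v)^{l-1}\log y/((l-1)!\log v)$. The two error terms of the inductive hypothesis, passed through the same integral, contribute $O(v(\log\log v)^{l-1}/\log v)$ and $O(v(\log\log v)^{l-2}\log^2 y/\log v)$ respectively; the explicit error in Lemma~\ref{Recursion} and the $(\log\log v)^{l-2}\log y$ cross term are both absorbed by the latter once we note $\log y\ge 1$.

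\textbf{Deducing the sum estimate.} Finally, Lemma~\ref{hk1} with $l=k$ gives $\sum_{p\le t}h_k(p)=g_{k,k}(t)+O(t^{1-1/3^k}\log t(\log\log t)^{k-2}y^k + t(\log\log t)^{k-2}\log y/\log t)$. Plugging in the just-established formula for $g_{k,k}(t)$ yields the second statement, with the residual $\log y$ error from Lemma~\ref{hk1} absorbed into the $\log^2 y$ error already present. The main obstacle I anticipate is bookkeeping in the base case: the secondary error piece must be controlled via $\sum_q (\log q)^2/q\ll\log^2 y$ rather than a naive bound, otherwise one risks an error of order $y\log y$ that would not fit the claim and would destroy the utility of the lemma downstream via the Tur\'an--Kubilius inequality.
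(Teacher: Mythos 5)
Your proof is correct and follows essentially the same route as the paper's: invoke Lemma~\ref{BVreduction} to pass from $\pi$ to $\li$ in the base case $l=2$, use the Pomerance asymptotic \eqref{BT3} for the innermost sum, handle the inductive step by plugging the hypothesis into Lemma~\ref{Recursion} and making the $w=\log\log u$ substitution, and finally attach Lemma~\ref{hk1}. The only cosmetic difference is in the $l=2$ bookkeeping: you sum $1/\phi(q^a)=1/(q^{a-1}(q-1))$ over $a$ in closed form to get $q/(q-1)^2$ and then apply Mertens, whereas the paper first expands $1/\phi(q^a)=1/q^a+O(1/q^{a+1})$ and applies the geometric estimate \eqref{geometric} termwise — these are the same argument reordered. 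Your closing worry about a potential $y\log y$ error is not really a danger here (any reasonable treatment of $\sum_{q\le y^k}(\log q)^2/q$ gives $O(\log^2 y)$), but being explicit about that bound, as you are, matches what the paper does.
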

\begin{proof}
The second formula is derived from the first by setting $l=k$, $u=t$ and using Lemma \ref{hk1}. We'll proceed with the first formula by induction on $l$. Using the estimates we obtained via Bombieri--Vinogradov in Lemma \ref{BVreduction},  we have for $l=2$

\begin{align*}g_{k,2}(u) &= \sum_{q \leq y^{k}} \log q \sum_{a \in \nat} \sum_{\substack{p_{k} \in \p_{q^{a}} \\ p_{k} \leq u^{1/3}}}\pi(u;p_k,1) \\
& = \li(u)\sum_{q \leq y^{k}} \log q \sum_{a \in \nat} \sum_{\substack{p_{k} \in \p_{q^{a}} \\ p_{k} \leq u^{1/3}}} \frac{1}{p_{k}} + O\bigg(\li(u)+\frac{u \log y}{\log u}\bigg).\end{align*}
We then use \eqref{BT3} and 
$$\log\log(u^{1/3})=\log\log u + O(1)$$
to get
\begin{align*}g_{k,2}(u)&=\li(u)\sum_{q \leq y^{k}} \log q \sum_{a \in \nat}\bigg( \frac{\log \log u^{1/3}}{\phi(q^{a})}+O\bigg(\frac{\log(q^{a})}{\phi(q^{a})} \bigg)\bigg)+ O\bigg(\frac{u \log y}{\log u}\bigg) \\
& =\li(u)(\log \log u + O(1))\sum_{q \leq y^{k}} \log q \sum_{a \in \nat}\bigg( \frac{1}{q^{a}}+O\bigg(\frac{1}{q^{a+1}}\bigg)\bigg)+O\bigg(\li(u)\sum_{q \leq y^{k}} \log^{2} q \sum_{a \in \nat} \frac{a}{q^{a}}\bigg) \\
& \hskip10mm+ O\bigg(\frac{u \log y}{\log u}\bigg) \\
&=\li(u)(\log \log u + O(1))\sum_{q \leq y^{k}} \bigg(\frac{\log q}{q} + O\bigg(\frac{\log q}{q^{2}}\bigg)\bigg)+O\bigg(\li(u)\sum_{q \leq y^{k}} \frac{\log^{2} q}{q}+\frac{u \log y}{\log u}\bigg)\\
&=\li(u)\log \log u \log(y^k) + O\bigg(\li(u)(\log y + \log \log u + \log^{2}y) +\frac{u \log y}{\log u}\bigg) \\
& =\frac{ku \log \log u \log y}{\log u} + O\bigg(\frac{u \log \log u}{\log u} +\frac{u \log^{2} y}{\log u}\bigg),\end{align*} completing the base case.
Now using Lemma \ref{Recursion} we get
\begin{align*}g_{k,l}(v)& =\li(v)\int_{2}^{v^{1/3}} \frac{1}{u^{2}} g_{k,l-1}(u)du + O\bigg(\frac{v (\log \log v)^{l-2}\log y}{\log v} \bigg)\\
&=\li(v)\int_{2}^{v^{1/3}} \frac{1}{u^{2}}\bigg(\frac{ku(\log\log u)^{l-2}\log y}{(l-2)!\log u}+O\bigg(\frac{u(\log\log u)^{l-2}}{\log u}+\\
& \hskip10mm\frac{u(\log\log u)^{l-3}\log ^2y}{\log u}\bigg)\bigg)du + O\bigg(\frac{v (\log \log v)^{l-2}\log y}{\log v} \bigg) \\
&=\li(v)\int_{2}^{v^{1/3}} \bigg(\frac{k(\log\log u)^{l-2}\log y}{(l-2)!u\log u}+O\bigg(\frac{(\log\log u)^{l-2}}{u\log u}+\frac{(\log\log u)^{l-3}\log ^2y}{u\log u}\bigg)\bigg)du\\
& \hskip10mm + O\bigg(\frac{v (\log \log v)^{l-2}\log y}{\log v} \bigg)\\
&= \frac{k\li(v)(\log\log v^{1/3})^{l-1}\log y}{(l-1)!}+O\bigg(\li(v)(\log\log v^{1/3})^{l-1}+  \li(v)(\log\log v^{1/3})^{l-2}\log ^2y \\
& \hskip10mm +\frac{v (\log \log v)^{l-2}\log y}{\log v} \bigg).
\end{align*}
Once again by using $$\log\log v^{1/3}=\log\log v +O(1)$$ we get
\begin{align*}
& \frac{kv(\log\log v)^{l-1}\log y}{(l-1)!\log v} +O\bigg(\frac{v(\log\log v)^{l-1}}{\log v}+
\frac{v(\log\log v)^{l-2}\log ^2y}{\log v}+\frac{v (\log \log v)^{l-2}\log y}{\log v} \bigg) \\
&=\frac{kv(\log\log v)^{l-1}\log y}{(l-1)!\log v} +O\bigg(\frac{v(\log\log v)^{l-1}}{\log v}+
\frac{v(\log\log v)^{l-2}\log ^2y}{\log v}\bigg),
\end{align*} completing the induction.
\end{proof}

\section{The Proof of the First Moment}
We now are in a position to prove the propostion for the first moment.
\begin{proof}[Proof of Proposition \ref{M1}]
\begin{align*} M_{1}(x)& = \sum_{p \leq x}\frac{\hk (p)}{p} \\
& = \sum_{p \leq e^{e}}\frac{\hk (p)}{p} + \sum_{e^{e} < p \leq x}\frac{\hk (p)}{p} \\
& = O(1) + \sum_{e^{e} < p \leq x}\hk (p)\bigg(\frac{1}{x}+ \int_{p}^{x}\frac{dt}{t^{2}}\bigg) \\
& =O(1)+ \frac{1}{x}\sum_{e^{e} < p \leq x}\hk (p) + \int_{e^{e}}^{x} \frac{dt}{t^{2}}\sum_{e^{e} < p \leq t}\hk (p).\end{align*}
Using $t=x$ in Lemma \ref{hk} we get that 
$$\sum_{e^{e} < p \leq x}\hk (p) \ll \frac{xy^{k-1}\log y}{\log x}$$ and since $$\sum_{e^{e} < p \leq t}\hk (p)$$ differs from $$\sum_{p \leq t}\hk (p)$$ by a constant, we get that

\begin{align*}M_{1}(x)&= O(1) + \frac{1}{x}O\bigg(\frac{xy^{k-1}\log y}{\log x}\bigg) +  \int_{e^{e}}^{x} \frac{dt}{t^{2}}\bigg(\frac{kt(\log\log t)^{k-1}\log y}{(k-1)!\log t}+O\bigg(\frac{t(\log\log t)^{k-1}}{\log t}\\ & \hskip10mm+\frac{t(\log\log t)^{k-2}\log ^2y}{\log t}+t^{1-1/3^k}\log t(\log\log t)^{k-2}y^k\bigg)\bigg)\end{align*} using Lemma \ref{hk}. Noting that 
\begin{align*}\int_{e^{e}}^{x}& \frac{dt}{t^{2}}t^{1-1/3^k}\log t(\log\log t)^{k-2}y^k \\
& = \int_{e^{e}}^{x}\frac{y^k dt}{t^{1+\epsilon}}\\
& \ll y^k \end{align*} yields
\begin{align*}
O(y^k)&+O\bigg(\frac{y^{k-1}\log y}{\log x}\bigg) +  \int_{e^{e}}^{x} \frac{dt}{t^{2}}\bigg(\frac{kt(\log\log t)^{k-1}\log y}{(k-1)!\log t}+O\bigg(\frac{t(\log\log t)^{k-1}}{\log t}\\ & \hskip10mm+\frac{t(\log\log t)^{k-2}\log ^2y}{\log t}\bigg)\bigg) \\
& = O(y^k)+  \frac{k(\log\log x)^{k}\log y}{k(k-1)!}+O\bigg((\log\log x)^{k}+ (\log\log x)^{k-1}\log ^2y\bigg) \\
& = \frac{y^{k}\log y}{(k-1)!}+O(y^k) \end{align*} as needed.
\end{proof}

\section{The Proof of the Second Moment}
We now turn our attention to the second moment. Our first lemma will bound the case where $p_3=r_3$ and then we'll use the summations from Lemma \ref{Doubles} to take care of the rest.

\begin{lemma} \label{DoubleSum}
\begin{align*}\sum_{q_{1},q_{2} \le y^{k}}\log q_{1} \log q_{2}& \sum_{a_{1},a_{2} \in \nat}\sum_{\substack{p_{k} \in \p_{q_{1}^{a_{1}}} \\ r_{k} \in \p_{q_{2}^{a_{2}}}}}\sum_{\substack{p_{k-1} \in \p_{p_{k}} \\ r_{k-1} \in \p_{r_{k}}}}\dots \sum_{\substack{p_{3} \in \p_{p_{4}} \\ r_{3} \in \p_{r_{4}}}}\sum_{s \in \p_{p_{3}} \cap \p_{r_{3}}}\sum_{\substack{p \le t \\ p \in \p_{s}}}1 \\ & \ll t^{1-\eps}y^{k}\log y + \frac{t(\log \log t)^{2k-2}}{\log t}\log ^{2} y \end{align*} for some $\epsilon > 0.$
\end{lemma}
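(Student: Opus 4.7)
The plan is to isolate the innermost sum as $\pi(t;s,1)$ and then split the outer sum over $q_1,q_2,a_1,a_2$ according to whether both moduli $q_1^{a_1}$ and $q_2^{a_2}$ are bounded by $t^{\alpha_1}$ for some fixed $\alpha_1 \in (0, 1/2)$. I will call these the \emph{small} and \emph{large} regimes; by symmetry I may assume $q_1^{a_1} > t^{\alpha_1}$ in the latter. The small regime will produce the main (second) term via Lemma \ref{Doubles}(d), while the large regime will produce a power saving that absorbs into the $t^{1-\eps}$ factor.

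In the small regime, Lemma \ref{Doubles}(d) applied with $b_i = q_i^{a_i}$ bounds the inner double chain by $\frac{t(\log\log t)^{2k-2}}{\phi(q_1^{a_1})\phi(q_2^{a_2})\log t}$, which is separable. The outer sum then factors into two copies of
$$\sum_{q \le y^k}\log q\sum_{a \in \nat}\frac{1}{\phi(q^a)} \ll \sum_{q \le y^k}\frac{\log q}{q} \ll \log y$$
via \eqref{geometric} and \eqref{Merten}, yielding exactly the claimed second term $\frac{t(\log\log t)^{2k-2}\log^2 y}{\log t}$.

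In the large regime I would estimate the inner chain crudely: bound $\pi(t;s,1) \le t/s$, then apply Brun--Titchmarsh \eqref{BT} to $\sum_{s}1/s$, and iterate \eqref{BT} up both the $p$-chain and the $r$-chain. Each level contributes a factor $\log\log t = y$, so the inner chain is bounded by $\ll t\cdot y^{O(k)}/(q_1^{a_1}q_2^{a_2})$. The tail geometric estimate \eqref{geometric} then gives
$$\sum_{q_1 \le y^k}\log q_1 \sum_{q_1^{a_1} > t^{\alpha_1}}\frac{1}{q_1^{a_1}} \ll \frac{y^k}{t^{\alpha_1}},$$
while the $(q_2,a_2)$-sum contributes $\ll \log y$ by the small-regime calculation above. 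Combining yields $\ll t^{1-\alpha_1}y^{O(k)}\log y$, which is absorbed into $t^{1-\eps}y^k\log y$ for any $\eps < \alpha_1$, since $y^{O(k)} = (\log\log t)^{O(k)}$ is polylog in $t$ and $t^{\alpha_1-\eps}$ is a positive power of $t$.

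The main obstacle I anticipate is the bookkeeping in the diagonal case $p_3 = r_3$, where the constraint $s \in \p_{p_3}\cap\p_{r_3}$ collapses to $s \in \p_{p_3}$ rather than $s \equiv 1 \pmod{p_3 r_3}$, so that the two chains effectively merge one level earlier than expected. Fortunately Lemma \ref{Doubles}(d) already packages this correctly in the small regime, and in the large regime the diagonal contribution is controlled by an additional application of \eqref{BT} to $\sum_{p_3\in \p_{p_4}\cap\p_{r_4}}1/p_3$, which is strictly smaller than the off-diagonal piece and thus harmless.
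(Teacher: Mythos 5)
Your proposal is correct and follows essentially the same route as the paper: split on the size of the outer moduli, handle the small-moduli regime by invoking Lemma \ref{Doubles}(d) and separating the $q_1,q_2$ sums via \eqref{geometric} and \eqref{Merten}, and handle the large-moduli tail by a crude Brun--Titchmarsh chain plus the geometric tail estimate to produce a $t^{1-\eps}$ power saving. The only cosmetic difference is the dichotomy you use (both $q_i^{a_i}\le t^{\alpha_1}$ versus the paper's $q_1^{a_1}q_2^{a_2}\le t^{\alpha}$); yours matches the hypothesis of Lemma \ref{Doubles}(d) more directly, and your closing remark correctly identifies that the diagonal $p_3=r_3$ contribution is already absorbed into the statement of \ref{Doubles}(d).
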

\begin{proof}
Our sum is 
\begin{align*}\sum_{q_{1},q_{2} \le y^{k}}&\log q_{1} \log q_{2}\sum_{a_{1},a_{2} \in \nat}\sum_{\substack{p_{k} \in \p_{q_{1}^{a_{1}}} \\ r_{k} \in \p_{q_{2}^{a_{2}}}}}\sum_{\substack{p_{k-1} \in \p_{p_{k}} \\ r_{k-1} \in \p_{r_{k}}}}\dots \sum_{\substack{p_{3} \in \p_{p_{4}} \\ r_{3} \in \p_{r_{4}}}}\sum_{s \in \p_{p_{3}} \cap \p_{r_{3}}}\sum_{\substack{p \le t \\ p \in \p_{s}}}1 \\
& =\sum_{q_{1},q_{2} \le y^{k}}\log q_{1} \log q_{2}\sum_{a_{1},a_{2} \in \nat}\sum_{\substack{p_{k} \in \p_{q_{1}^{a_{1}}} \\ r_{k} \in \p_{q_{2}^{a_{2}}}}}\sum_{\substack{p_{k-1} \in \p_{p_{k}} \\ r_{k-1} \in \p_{r_{k}}}}\dots \sum_{\substack{p_{3} \in \p_{p_{4}} \\ r_{3} \in \p_{r_{4}}}}\sum_{s \in \p_{p_{3}r_3}}\pi(t;s,1).\end{align*}
We split up into two cases. If $q_{1}^{a_{1}}q_{2}^{a_{2}} > t^{\alpha}$, then suppose $q_{1}^{a_{1}}>t^{\alpha/2}.$ (the other case is analogous) We get from the trivial bound on $\pi(t;s,1)$ that
\begin{align*}\sum_{q_{1},q_{2} \le y^{k}}& \log q_{1} \log q_{2}\sum_{\substack{a_{1},a_{2} \in \nat \\ q_{1}^{a_{1}}>t^{\frac{\alpha}{2}}}}\sum_{\substack{p_{k} \in \p_{q_{1}^{a_{1}}} \\ r_{k} \in \p_{q_{2}^{a_{2}}}}}\sum_{\substack{p_{k-1} \in \p_{p_{k}} \\ r_{k-1} \in \p_{r_{k}}}}\dots \sum_{\substack{p_{3} \in \p_{p_{4}} \\ r_{3} \in \p_{r_{4}}}}\sum_{s \in \p_{p_{3}r_3}}\pi(t;s,1) \\
& = \sum_{q_{1},q_{2} \le y^{k}}\log q_{1} \log q_{2}\sum_{\substack{a_{1},a_{2} \in \nat \\ q_{1}^{a_{1}}>t^{\frac{\alpha}{2}}}}\sum_{\substack{p_{k} \in \p_{q_{1}^{a_{1}}} \\ r_{k} \in \p_{q_{2}^{a_{2}}}}}\sum_{\substack{p_{k-1} \in \p_{p_{k}} \\ r_{k-1} \in \p_{r_{k}}}}\dots \sum_{\substack{p_{3} \in \p_{p_{4}} \\ r_{3} \in \p_{r_{4}}}} \sum_{s \in \p_{p_{3}r_3}}\frac{t\log t}{s} \\
& = \sum_{q_{1},q_{2} \le y^{k}}\log q_{1} \log q_{2}\sum_{\substack{a_{1},a_{2} \in \nat \\ q_{1}^{a_{1}}>t^{\frac{\alpha}{2}}}}\sum_{\substack{p_{k} \in \p_{q_{1}^{a_{1}}} \\ r_{k} \in \p_{q_{2}^{a_{2}}}}}\sum_{\substack{p_{k-1} \in \p_{p_{k}} \\ r_{k-1} \in \p_{r_{k}}}}\dots \sum_{\substack{p_{3} \in \p_{p_{4}} \\ r_{3} \in \p_{r_{4}}}}\frac{t\log t \log\log t}{p_3r_3} \\
& = \sum_{q_{1},q_{2} \le y^{k}}\log q_{1} \log q_{2}\sum_{\substack{a_{1},a_{2} \in \nat \\ q_{1}^{a_{1}}>t^{\frac{\alpha}{2}}}}\frac{t\log t (\log\log t)^{2k-3}}{q_1^{\alpha_1}q_2^{\alpha_2}}.
\end{align*}
By letting $A = \min \{a | q_{1}^{a_{1}}>t^{\frac{\alpha}{2}}\} $ we get 

\begin{align*}& \ll \sum_{q_{1},q_{2} \le y^{k}}\log q_{1} \log q_{2}\frac{t \log t (\log \log t)^{k-1}}{q_{1}^{A}q_{2}} \\
& \le t^{1-\frac{\alpha}{2}} \log t (\log \log t)^{2k-3}\sum_{q_{1} \le y^{k}}\log q_{1}\sum_{q_{2} \le y^{k}}\frac{\log q_{2}}{q} \\ & \ll t^{1-\eps}y^{k} \log y.\end{align*}
If $q_{1}^{a_{1}}q_{2}^{a_{2}} > t^{\alpha}$, then by Lemma \ref{Doubles} part (d) we get 
\begin{align*}\sum_{q_{1},q_{2} \le y^{k}}& \log q_{1} \log q_{2}\sum_{\substack{a_{1},a_{2} \in \nat \\ q_{1}^{a_{1}}q_{2}^{a_{2}}\le t^{\alpha}}}\sum_{\substack{p_{k} \in \p_{q_{1}^{a_{1}}} \\ r_{k} \in \p_{q_{2}^{a_{2}}}}}\sum_{\substack{p_{k-1} \in \p_{p_{k}} \\ r_{k-1} \in \p_{r_{k}}}}\dots \sum_{\substack{p_{3} \in \p_{p_{4}} \\ r_{3} \in \p_{r_{4}}}}\sum_{s \in \p_{p_{3}r_3}}\pi(t;s,1) \\
& \ll \sum_{q_{1},q_{2} \le y^{k}}\log q_{1} \log q_{2}\sum_{\substack{a_{1},a_{2} \in \nat \\ q_{1}^{a_{1}}q_{2}^{a_{2}}\le t^{\alpha}}}\frac{t (\log\log t)^{2k-2}}{q_{1}^{a_{1}}q_{2}^{a_{2}} \log t} \\
& \ll \sum_{q_{1},q_{2} \le y^{k}}\log q_{1} \log q_{2}\frac{t (\log\log t)^{2k-2}}{q_{1}q_{2} \log t} \\
& = \frac{t (\log\log t)^{2k-2}}{\log t}\bigg( \sum_{q \le y^{k}}\frac{\log q}{q}\bigg)^2 \\
& \ll \frac{t (\log\log t)^{2k-2}}{\log t}\log^2 y \end{align*} by \eqref{Merten}, completing the lemma.

\end{proof}

We now have enough to finish the second moment which is the final piece of the puzzle.

\begin{proof}[Proof of Proposition~\ref{M2}]
\begin{align*}\sum_{p \leq t}\hk (p)^{2} &=\sum_{p \leq x}\bigg(\sum_{p_{1}\mid p} \sum_{p_{2}\mid p_{1}-1} \dots \sum_{p_{k}\mid p_{k-1}-1} \sum_{q \leq y^{k}} \nu_{q}(p_{k}-1)\log q\bigg)^{2}
\\ &=\sum_{q_{1},q_{2} \le y^{k}}\log q_{1} \log q_{2}\sum_{a_{1},a_{2} \in \nat}\sum_{\substack{p_{k} \in \p_{q_{1}^{a_{1}}} \\ r_{k} \in \p_{q_{2}^{a_{2}}}}}\sum_{\substack{p_{k-1} \in \p_{p_{k}} \\ r_{k-1} \in \p_{r_{k}}}}\dots \sum_{\substack{p_{2} \in \p_{p_{3}} \\ r_{2} \in \p_{r_{3}}}}\sum_{\substack{p \le t \\ p \in \p_{p_{2}} \\ p \in \p_{r_{2}}}}1
\end{align*}
since the condition $p_{1} \mid p$ only occurs if $p_{1} = p.$ We then split up the sum according to whether or not $p_2=r_2.$ Lemma \ref{DoubleSum} deals with the part where $s=p_2=r_2$ leaving us with

\begin{align*}\sum_{q_{1},q_{2} \le y^{k}}\log q_{1} \log q_{2} &\sum_{a_{1},a_{2} \in \nat}\sum_{\substack{p_{k} \in \p_{q_{1}^{a_{1}}} \\ r_{k} \in \p_{q_{2}^{a_{2}}}}}\sum_{\substack{p_{k-1} \in \p_{p_{k}} \\ r_{k-1} \in \p_{r_{k}}}}...\sum_{\substack{p_{2} \in \p_{p_{3}} \\ r_{2} \in \p_{r_{3}} \\ p_{2} \ne r_{2}}}\sum_{\substack{p \le t \\ p \in \p_{p_{2}} \\ p \in \p_{r_{2}}}}1 \\ &+ O\bigg(t^{1-\eps}y^{k}\log y + \frac{t(\log \log t)^{2k-2}}{\log t}\log ^{2} y  \bigg).\end{align*}
The sum becomes 
$$\sum_{q_{1},q_{2} \le y^{k}}\log q_{1} \log q_{2} \sum_{a_{1},a_{2} \in \nat}\sum_{\substack{p_{k} \in \p_{q_{1}^{a_{1}}} \\ r_{k} \in \p_{q_{2}^{a_{2}}}}}\sum_{\substack{p_{k-1} \in \p_{p_{k}} \\ r_{k-1} \in \p_{r_{k}}}}\dots\sum_{\substack{p_{2} \in \p_{p_{3}} \\ r_{2} \in \p_{r_{3}}}} \pi(t;p_{2}r_{2},1).$$
If $q_{1}^{a_{1}} > t^{\alpha_{1}}$, then so is $p_{2}$, and hence by \eqref{Doublea} we get 

\begin{align*}&\sum_{q_{1},q_{2} \le y^{k}}\log q_{1} \log q_{2} \sum_{\substack{a_{1},a_{2} \in \nat \\ q_{1}^{a_{1}} > t^{\alpha_{1}}}} \sum_{\substack{p_{k} \in \p_{q_{1}^{a_{1}}}\\ r_{k} \in \p_{q_{2}^{a_{2}}}}}\sum_{\substack{p_{k-1} \in \p_{p_{k}} \\ r_{k-1} \in \p_{r_{k}}}}\dots\sum_{\substack{p_{3} \in \p_{p_{4}} \\ r_{3} \in \p_{r_{4}}}} \frac{t\log^{2}t}{p_{3}r_{3}}
\\ &\ll \sum_{q_{1},q_{2} \le y^{k}}\log q_{1} \log q_{2} \sum_{\substack{a_{1},a_{2} \in \nat \\ q_{1}^{a_{1}} > t^{\alpha_{1}}}}\frac{t\log^{2}t (\log\log t)^{2k-4}}{q_{1}^{a_{1}}q_{2}^{a_{2}}}
\\ &\ll  t^{1-\alpha_{1}}\log^{2}t (\log\log t)^{2k-4}\sum_{q_{1},q_{2} \le y^{k}}\log q_{1} \log q_{2} \sum_{a_{2} \in \nat }\frac{1}{q_{2}^{a_{2}}}
\\ &\ll  t^{1-\alpha_{1}}\log^{2}t (\log\log t)^{2k-4}\sum_{q_{1},q_{2} \le y^{k}}\frac{\log q_{1} \log q_{2}}{q_{2}}
\\ &\ll  t^{1-\alpha_{1}}\log^{2}t (\log\log t)^{2k-4}  (y^{k}\log y ). \end{align*}
We similarly get the same bound if $q_{2}^{a_{2}} > t^{\alpha_{1}}.$ If neither of $q_{1}^{a_{1}},q_{2}^{a_{2}}$ exceed $ t^{\alpha_{1}},$ then by \eqref{Doublec} and using that for $b_{i}=q_{i}^{a_{i}}$
$$\frac{b_{i}}{\phi(b_{i})} \ll 1, \frac{1}{\phi(b_{i})} \ll \frac{1}{b_{i}},$$ we get
\begin{align*}\sum_{q_{1},q_{2} \le y^{k}}\log q_{1} &\log q_{2} \sum_{\substack{a_{1},a_{2} \in \nat  \\ q_{1}^{a_{1}},q_{2}^{a_{2}} \le  t^{\alpha_{1}}}}\sum_{\substack{p_{k} \in \p_{q_{1}^{a_{1}}} \\ r_{k} \in \p_{q_{2}^{a_{2}}}}}\sum_{\substack{p_{k-1} \in \p_{p_{k}} \\ r_{k-1} \in \p_{r_{k}}}}\dots\sum_{\substack{p_{i} \in \p_{p_{i+1}} \\ r_{i} \in \p_{r_{i+1}}}} \sum_{\substack{p_{i-1} \in \p_{p_{i}} \\ r_{i-1} \in \p_{r_{i}}}} \dots \sum_{\substack{p_{2} \in \p_{p_{3}} \\ r_{2} \in \p_{r_{3}}}} \pi(t;p_{2}r_{2},1) \\
& \ll \sum_{q_{1},q_{2} \le y^{k}}\log q_{1} \log q_{2} \sum_{\substack{a_{1},a_{2} \in \nat  \\ q_{1}^{a_{1}},q_{2}^{a_{2}} \le  t^{\alpha_{1}}}}\frac{t(\log \log t)^{2k-2}}{q_{1}^{a_{1}}q_{2}^{a_{2}}\log t} \\
& \ll \frac{t(\log \log t)^{2k-2}}{\log t} \sum_{q_{1},q_{2} \le y^{k}}\frac{\log q_{1} \log q_{2}}{q_{1}q_{2}} \\
& \ll \frac{t(\log \log t)^{2k-2}\log ^2 y}{\log t}.
\end{align*}
Hence the above gives us that
$$\sum_{p \leq t}\hk (p)^{2} \ll t^{1-\eps}y^{k}\log y + \frac{t(\log \log t)^{2k-2}\log ^{2} y}{\log t}.$$
Using partial summation we have 
\begin{align*}
M_2(x) &= \sum_{p \leq x}\frac{\hk (p)^{2}}{p} =  \sum_{p \leq e^{e}}\frac{\hk (p)^{2}}{p} + \frac{1}{x}\sum_{e^{e} \le p \leq x}\hk (p)^{2} + \int_{e^e}^{x}\frac{dt}{t^{2}}\sum_{e^{e} \le p \leq t}\hk (p)^{2} \\
& \ll 1+\frac{1}{x}\bigg( x^{1-\eps}y^{k}\log y + \frac{x(\log \log x)^{2k-2}\log ^{2} y}{\log x}\bigg) \\
& \hskip10mm+ \int_{e^e}^{x}\bigg(t^{-1-\eps}y^{k}\log y + \frac{(\log \log t)^{2k-2}\log ^{2} y}{t\log t}\bigg)dt \\
&\ll \frac{y^{2k-2}\log ^{2} y}{\log x} + x^{-\eps}y^k \log y + (\log \log x)^{2k-1}\log ^2 y \\
&\ll y^{2k-1}\log ^2 y
\end{align*} completing the proof of Proposition \ref{M2} and hence Theorem \ref{MainTheorem}.
\end{proof}

\section{Theorem \ref{Second}}\label{Theorem2}

We now turn our attention to the proof of Theorem \ref{Second}. It will be necessary to use the following upper bound for the Carmichael function of a product.

\begin{lemma}
Let $a,b$ be natural numbers, then
\begin{equation}\label{lambdabound}
\lambda(ab) \le b\lambda(a).
\end{equation} 
\begin{proof}
We first note that it suffices to show the inequality whenever $b$ is prime, because if 
$$b=p_1\dots p_k$$ where the $p_i$ are not necessarily distinct, then repeated use of the theorem where $b$ is prime yields

$$\lambda(ab) = \lambda(ap_1\dots p_k) \le p_1\lambda(ap_2\dots p_k)\le \dots \le p_1\dots p_k\lambda(a) = b\lambda(a).$$
If $b$ is a prime which divides $a$, then 
$$a=b^ep_1^{e_1}\dots p_k^{e_k} \text{ and } ab=b^{e+1}p_1^{e_1}\dots p_k^{e_k}.$$
Therefore \begin{align*}\lambda(ab)&=\lcm\bigg(\lambda(b^{e+1}),\lambda(p_1^{e_1}),\dots ,\lambda(p_k^{e_k})\bigg) \\ 
&\le \lcm\bigg(b\lambda(b^{e}),\lambda(p_1^{e_1}),\dots,\lambda(p_k^{e_k})\bigg) \\
& \le b * \lcm\bigg(\lambda(b^{e}),\lambda(p_1^{e_1}),\dots,\lambda(p_k^{e_k})\bigg) \\ 
& = b\lambda(a)\end{align*} where the first inequality is in fact an equality if $b^e=4$. Also note that in this case, it would not be hard to show that $\lambda(ab) \mid b\lambda(a).$ If $(a,b)=1$, then

\begin{align*}\lambda(ab)&=lcm\bigg(b-1,\lambda(p_1^{e_1}),\dots,\lambda(p_k^{e_k})\bigg) \\ 
& \le (b-1) \lcm\bigg(\lambda(p_1^{e_1}),\dots,\lambda(p_k^{e_k})\bigg) \\ 
& <  b\lambda(a),\end{align*} ending the proposition.
\end{proof}
\end{lemma}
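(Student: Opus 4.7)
The plan is to reduce to the prime case and then split according to whether $b$ shares a factor with $a$. First I would observe that it suffices to prove the inequality when $b$ is prime: if $b = p_1 p_2 \cdots p_r$ is a factorization into primes (with repetition), then iterating the prime case gives
$$\lambda(ab) = \lambda(a p_1 p_2 \cdots p_r) \le p_1 \lambda(a p_2 \cdots p_r) \le \cdots \le p_1 p_2 \cdots p_r \,\lambda(a) = b\lambda(a).$$
So the work is concentrated in the case that $b$ is a single prime.

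For $b$ prime coprime to $a$, the identity $\lambda(mn) = \lcm(\lambda(m),\lambda(n))$ for $\gcd(m,n)=1$ immediately gives
$$\lambda(ab) = \lcm(\lambda(a),\lambda(b)) = \lcm(\lambda(a),b-1) \le (b-1)\lambda(a) < b\lambda(a),$$
using only that $\lcm(x,y) \le xy$. For $b$ prime with $b \mid a$, write $a = b^e m$ with $e \ge 1$ and $\gcd(b,m)=1$, so $ab = b^{e+1} m$. Then
$$\lambda(ab) = \lcm(\lambda(b^{e+1}), \lambda(m)), \qquad \lambda(a) = \lcm(\lambda(b^e), \lambda(m)),$$
and it remains to compare $\lambda(b^{e+1})$ with $b\lambda(b^e)$, after which monotonicity of $\lcm$ in its first argument finishes the case.

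The heart of the argument is therefore the subclaim $\lambda(b^{e+1}) \le b\lambda(b^e)$ for every prime $b$ and every $e \ge 1$. For odd $b$ this is an equality: $\lambda(b^{e+1}) = b^{e+1} - b^e = b(b^e - b^{e-1}) = b\lambda(b^e)$. The only subtlety is $b = 2$, where one must appeal directly to the explicit formulas $\lambda(2)=1$, $\lambda(4)=2$, $\lambda(2^k) = 2^{k-2}$ for $k \ge 3$ recalled in the introduction, and check case-by-case that the ratio $\lambda(2^{e+1})/\lambda(2^e)$ is always $1$ or $2$. This is the step requiring the most care, but it is a short finite verification rather than a genuine obstacle; once established, combining with the $\lcm$ computation above completes the prime case and hence the lemma.
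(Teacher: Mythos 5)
Your proposal follows essentially the same route as the paper: reduce to prime $b$ by iteration, split according to $b\mid a$ or $\gcd(a,b)=1$, and in the former case reduce to comparing $\lambda(b^{e+1})$ with $b\lambda(b^e)$. One point to tighten: $\lcm$ is \emph{not} monotone in the usual order on $\mathbb{N}$ (for instance $\lcm(8,4)=8<12=\lcm(6,4)$), so the phrase ``monotonicity of $\lcm$ in its first argument'' should be replaced by monotonicity in the divisibility order, namely that $x\mid x'$ implies $\lcm(x,y)\mid\lcm(x',y)$; your case analysis actually delivers this stronger fact $\lambda(b^{e+1})\mid b\lambda(b^e)$ (equality for odd $b$, ratio $1$ or $2$ for $b=2$), so the argument goes through, and the paper leans on the same implicit divisibility when it writes $\lcm(\lambda(b^{e+1}),\dots)\le\lcm(b\lambda(b^e),\dots)$.
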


Suppose that $g(n)$ is an arithmetic function of the form $\phi(h(n))$ where $h(n)$ is a $(k-1)$--fold iterate involving $\phi$ and $\lambda$. Then we can use equation \eqref{lambdabound} to get 

$$\lambda_{l+k}(n) \le \lambda_l(g(n)) \le \lambda_l\bigg(\frac{g(n)}{\lambda_k(n)}\lambda_k(n)  \bigg) \le \lambda_{l+k}(n)\frac{g(n)}{\lambda_k(n)}.$$
Since $g(n)\le n$ we have that $$\frac{g(n)}{\lambda_k(n)} \le \frac{n}{\lambda_k(n)} = \exp\bigg(\frac{1}{(k-1)!}(1+o_k(1)) (\log\log{n})^{k}\log\log\log{n} \bigg)$$ by Theorem \ref{MainTheorem} and hence

$$\lambda_{l+k}(n) \le \lambda_l(g(n)) \le \lambda_l\bigg(\frac{g(n)}{\lambda_k(n)}\lambda_k(n)  \bigg) \le \lambda_{l+k}(n)\exp\bigg(\frac{1}{(k-1)!} (\log\log{n})^{k}(1+o_k(1))\log\log\log{n} \bigg).$$
From the fact that 
$$\lambda_{l+k}(n)=n\exp\bigg(-\frac{1}{(k+l-1)!}(1+o_{l,k}(1))(\log\log{n})^{k+l}\log\log\log{n} \bigg)$$ we get 

$$\lambda_l(g(n))=n\exp\bigg(-\frac{1}{(k+l-1)!}(1+o_{l,k}(1))(\log\log{n})^{k+l}\log\log\log{n} \bigg).$$
As for $\phi(g(n))$ we note that unless $g(n)=\phi_k(n)$, $g(n)$ can be writen as $\phi_l(h(n))$ where $h(n)$ is a $(k-l)$--fold iterate beginning with a $\lambda$. From above we can see that

$$h(n)=n\exp\bigg(-\frac{1}{(k-l-1)!}(1+o_{k}(1))(\log\log{n})^{k-l}\log\log\log{n} \bigg)$$ and so $\phi(h(n))$ is bounded above by $h(n)$ and below by 
\begin{align*}\frac{h(n)}{e^{\gamma}\log\log h(n) + \frac{3}{\log\log h(n)}} &= \frac{h(n)}{e^{\gamma}\log\big(\log n -  \frac{1}{(k-l-1)!}(1+o_{k}(1))(\log\log{n})^{k-l}\log\log\log{n} \big)} \\
& = \frac{h(n)}{e^{\gamma}\log\log n -  O\big(\frac{1}{(k-l-1)!\log n}(1+o_{k}(1))(\log\log{n})^{k-l}\log\log\log{n} \big)} \\
& =h(n)\exp\big(O(\log\log\log n) \big)
\end{align*} which is within the error of $h(n)$. Hence any string of $\phi$ will not change our estimate. Therefore if $g(n)$ is a $k$--fold iteration of $\phi$ and $\lambda$ which is not $\phi_k(n)$, but which begins with $l$ copies of $\phi$, then 

$$g(n)=n\exp\bigg(-\frac{1}{(k-l-1)!}(1+o_{k}(1))(\log\log{n})^{k-l}\log\log\log{n} \bigg)$$ yielding our theorem.

\section*{Acknowledgements}
The author would like to thank Greg Martin for his assistance and guidance.

\end{document}